\declaretheorem{theorem}
\declaretheorem[sibling=theorem]{lemma}
\declaretheorem[sibling=theorem]{corollary}
\declaretheorem[sibling=theorem]{proposition}
\declaretheorem[sibling=theorem, style=definition]{definition}
\declaretheorem[sibling=theorem]{question}
\newcommand{\A}{\mathcal{A}}
\newcommand{\B}{\mathcal{B}}
\newcommand{\C}{\mathcal{C}}
\newcommand{\D}{\mathcal{D}}
\newcommand{\G}{\mathcal{G}}
\newcommand{\K}{\mathcal{K}}
\renewcommand{\L}{\mathcal{L}}
\newcommand{\PropL}{\Nat\hbox{-}L^{p}_{\omega_1\omega}}
\newcommand{\PropLc}{\Nat\hbox{-}L^{p}_{c\omega}}
\newcommand\vvee{\hstretch{.8}{\vee\mkern-8mu\vee}}       % "double vee"
\newcommand\wwedge{\hstretch{.8}{\wedge\mkern-8mu\wedge}} % "double wedge"
\newcommand{\pSigma}{\Nat\hbox{-}\Sigma}
\newcommand{\pPi}{\Nat\hbox{-}\Pi}
\newcommand{\bSigma}{\pmb\Sigma}
\newcommand{\bDelta}{\pmb\Delta}
\newcommand{\bPi}{\pmb\Pi}
\newcommand{\pXS}[1][X]{\Sigma^{p,#1}}
\newcommand{\pXP}[1][X]{\Pi^{p,#1}}
\newcommand{\cS}{\Sigma^{pc}}
\newcommand{\cP}{\Pi^{pc}}
\newcommand{\Nat}{\mathbb{N}}
\newcommand{\Ord}{\mathcal{O}}
\newcommand{\Mod}{\text{Mod}}
\newcommand{\forces}{\Vdash_\A}
\newcommand{\defiff}{\mathrel{\mathord{:}\mathord{\iff}}}
\newcommand{\pair}[1]{\langle{#1}\rangle}
\newcommand{\ov}[1]{\overline{#1}}
\newcommand{\code}[1]{\ulcorner{#1}\urcorner}
\newcommand{\posinf}{L^p_{\omega_1\omega}}
\newcommand{\comp}[1]{2^\omega\setminus{#1}}
\renewcommand{\phi}{\xi}
\author[N.\ Bazhenov]{Nikolay Bazhenov}
\address{Sobolev Institute of Mathematics, Novosibirsk}
\email{bazhenov@math.nsc.ru}
\author[E.\ Fokina]{Ekaterina Fokina}
\address{Institute of Discrete Mathematics and Geometry, Technische Universit\"at Wien}
\email{ekaterina.fokina@tuwien.ac.at}
\author[D.\ Rossegger]{Dino Rossegger}
\address{Department of Mathematics, University of California, Berkeley {\normalfont and} Institute of Discrete Mathematics and Geometry, Technische Universit\"at Wien}
\email{dino@math.berkeley.edu}
\author[A.\ Soskova]{Alexandra Soskova}
\address{
Sofia University, Faculty of Mathematics and Informatics\\ 
}
\email{asoskova@fmi.uni-sofia.bg}
\author[S.\ Vatev]{Stefan Vatev}
\address{
Sofia University, Faculty of Mathematics and Informatics\\ 
}
\email{stefanv@fmi.uni-sofia.bg}
\subjclass{03C57,03D45, 03C70}
\thanks{Draft compiled at \currenttime\ on \today. The work of the third author was supported by the European Union's Horizon 2020 Research and Innovation Programme under the Marie Sk\l{}odowska-Curie grant agreement No. 101026834 — ACOSE. This project received support
from the Austrian Agency for International Cooperation in Education and Research under grant
WTZ-BG11/2019 and from the Bulgarian National Science Fund through contracts KP-06-Austria-04/06.08.2019.
The fourth and the fifth author were also partially supported by Sofia University Science Fund, project 80-10-134/20.05.2022.}
\title{A Lopez-Escobar theorem for continuous domains}
\begin{document}
\maketitle
\begin{abstract}
  We prove an effective version of the Lopez-Escobar theorem for continuous
  domains. Let $Mod(\tau)$ be the set of countable structures with universe $\omega$ in vocabulary
  $\tau$ topologized by the Scott topology. We show that an invariant set $X\subseteq Mod(\tau)$
  is $\Pi^0_\alpha$ in the effective Borel hierarchy of this topology if and only if it is definable by a $\Pi^p_\alpha$-formula,
  a positive $\Pi^0_\alpha$ formula in the infinitary logic
  $L_{\omega_1\omega}$. As a corollary of this result we obtain a new pullback
  theorem for positive computable embeddings: Let $\K$ be positively computably embeddable in $\K'$ by $\Phi$, then for every $\Pi^p_\alpha$ formula $\phi$ in the vocabulary of $\K'$ there is a $\Pi^p_\alpha$ formula $\phi^*$ in the vocabulary of $\K$ such that for all $\A\in \K$, $\A\models \phi^*$ if and only if $\Phi(\A)\models \phi$. We use this to obtain new results on the possibility of positive computable embeddings into the class of linear orderings.
\end{abstract}
\section{Introduction}\label{sec:introduction}
A common theme in the study of countable mathematical structures is that
isomorphism invariant properties have syntactic characterizations. Examples of
this phenomenon are the existence of Scott sentences for countable
structures~\cite{scott1963}, the Lopez-Escobar theorem, which says that every
invariant Borel subset in the space of countable structures is
definable in the infinitary logic $L_{\omega_1\omega}$~\cite{lopez-escobar1969}, or
that a relation on a structure that is $\Sigma^0_\alpha$ in every copy is definable by a
computable $\Sigma^0_\alpha$ $L_{\omega_1\omega}$-formula~\cite{ash1989,chisholm1990}.

Definability in the logic $L_{\omega_1\omega}$ is closely related to Turing reducibility. The base case of~\cite{ash1989,chisholm1990} tells us that a relation is definable by a computable $\Sigma^0_1$ formula and a computable $\Pi^0_1$ formula if and only if it is computable in every copy. Similarly, evaluating whether a computable infinitary $\Sigma^0_\alpha$ formula holds of a structure $\A$ is c.e.\ in the $\alpha$th Turing jump of $\A$, and hence deciding membership of computable structures in $\Sigma^0_\alpha$ sets is $\Sigma^0_\alpha$. Another fact that makes this connection is that the space of countable $\tau$-structures with universe $\omega$, $Mod(\tau)$, can be viewed as a closed subspace of Cantor space $\mathcal C$. It is well known that continuous functions $\mathcal C\to \mathcal C$ are $X$-computable operators relative to some $X\in 2^\omega$, and thus Turing computable operators correspond to the effectively continuous functions. 

Being a powerful tool to study algorithmic properties, Turing reducibility is not well suited to deal with partial objects, such as c.e.\ sets, partial functions, partial structures, etc. Enumeration reducibility was introduced by Friedberg and Rogers~\cite{FR59} as a generalization of relative computability intended to deal with partial objects. Namely,  $X\subseteq \omega$ is enumeration reducible to $Y\subseteq \omega$, $X\leq_e Y$, if every enumeration of $Y$ computes an enumeration of $X$. %Likewise, when we say that a relation $R\subseteq A^{<\omega}$ is $\Sigma^0_\alpha$ in every copy of $\A$ we mean that it is c.e.\ in the $\alpha$th Turing jumps of the atomic diagrams of $\A$'s copies. In this article we study computational properties of structures with respect to enumeration reducibility.
Given the explained above relationship between definability and Turing computability, one wonders whether a similar relationship can be established between definability and enumeration reducibility.
To establish such a correspondence, throughout the paper we will treat structures as partial objects given by their positive diagrams. For a structure $\A$ in a relational vocabulary $\tau$ its positive diagram is the set $\bigoplus_{R_i\in \tau} R_i^\A$.

Algorithmic properties of structures given by their positive diagrams have been studied for a long time, as this kind of presentation has deep connections to results in algebra and theoretical computer science. The formal general definition of positive (or c.e.)\ structures was given by Mal'cev~\cite{Malcev61}. From an algebraic point of view, many finitely presented algebras are naturally positive but lack a computable presentation~\cite{Markov47, Novikov55, Boone59}. An extensive overview of results on positive (c.e.)\ structures from the 20th century can be found in~\cite{Selivanov03}. Later c.e. structures were studied by Harizanov and co-authors~\cite{CHR11, CHR14,DH17}. In a recent survey~\cite{Khoussainov18} Khoussainov summarizes more results on c.e.\  structures and suggests new questions, relating the study of c.e.\ structures to the theory of equivalence relations.

The approach that applies enumeration reducibility to study computational properties of structures goes back to Soskov~\cite{soskov2004,soskov-baleva-2006} who adapted notions studied in computable structure theory to enumeration reducibility. Assuming that a structure  is given by its positive diagram, one may ask questions similar to the ones in classic computable structure theory but with respect to enumeration reducibility. For example, Richter introduced the notion of the Turing degree spectrum of a structure, the set of Turing degrees of its isomorphic copies~\cite{Richter81}. The  first and one of the fundamental results in this area of Knight is a dichotomy~\cite{knight1986}. If a structure is automorphically non-trivial, its degree spectrum is upwards closed. Otherwise, it is a singleton. This dichotomy disappears in the analogue of this notion for enumeration reducibility---enumeration degree spectra. The enumeration degree spectrum of a structure is the set of degrees of enumerations of its positive diagram~\cite{soskov2004}. Soskov's work on this notion exposed deep theory and contributed to the study of Turing degree spectra, see~\cite{soskova2017} for a survey. In~\cite{csima2022} r.i.p.e.\ relations, relations that are enumerable from the positive diagrams of all isomorphic copies of a given structure, were studied. The main results show that a relation is r.i.p.e.\ if and only if it is definable by a positive computable $\Sigma^0_1$ infinitary formula, and that positive enumerable functors---a reducibility between structures introduced in~\cite{csima2021}---have a syntactic equivalent in a version of interpretability using $\Sigma^p_1$ formulas.
%The main results show that a relation is r.i.p.e.\ if and only if it is definable by positive computable $\Sigma_1$ infinitary formulas, computable $\Sigma^p_1$ formulas, and that positive enumerable functors, a reducibility between structures introduced in~\cite{csima2021}, have a syntactic equivalent in interpretability using $\Sigma^p_1$ formulas. 

In this article, we establish another relationship between definability by positive infinitary formulas and enumeration reducibility. Our main result is an effective version of the Lopez-Escobar theorem that establishes a connection between the Borel hierarchy on the Scott topology on the space of countable structures and a hierarchy of positive $L_{\omega_1\omega}$-formulas.
Let $(Mod(\tau), \preceq)$ be the set of countable structures in the relational language $\tau$ with universe $\omega$, where $\preceq$ is the substructure ordering on $Mod(\tau)$. This ordering gives rise to a topology, the Scott topology with basic open sets the superstructures of $\tau$-finite structures. It is not a Polish space but an $\omega$-continuous domain and allows the definition of a Borel hierarchy. Selivanov~\cite{selivanov2006} showed that the Borel sets in this topology equal the Borel sets under the usual Polish topology on $Mod(\tau)$ and, thus, one gets Lopez-Escobar's theorem for the Scott topology for free. However, the Borel hierarchies on this spaces differ and we will prove a stronger version of this theorem, matching the Borel complexity with the complexity of defining formulas. We show that a set is $\Pi^0_\alpha$ in the effective Borel hierarchy on the Scott topology if and only if it is definable by a computable $\Pi^p_\alpha$ formula (\cref{thm:efflopezescobar}). The $p$ in $\Pi^p_\alpha$ stands for positive; the $\Pi^p_\alpha$ formulas are a subset of the classical $\Pi^0_\alpha$ formulas in the logic $L_{\omega_1\omega}$ with the use of negations restricted. Vanden Boom~\cite{vandenboom2007} showed the analogue of our theorem in the classical setting. The bold-face version is a corollary of Vaught's proof of the Lopez-Escobar theorem~\cite{vaught}. We will prove our version of the Lopez-Escobar theorem using a notion of forcing which can be seen as an amalgamation of the forcing used in Vanden Boom's proof and a forcing used by Soskov~\cite{soskov2004}.

As Turing computable operators are to Polish spaces, enumeration operators are to the Scott topology. Enumeration operators can be viewed as functions mapping subsets of $\omega$ to subsets of $\omega$ and thus as functions from $2^\omega$ to $2^\omega$. While we have only defined the Scott topology on $Mod(\tau)$, there is a similar topology on $2^\omega$ where the basic open sets are the supersets of finite sets. Enumeration operators correspond to effective Scott continuous functions from $2^\omega$ to $2^\omega$. As in the classical case, we can view the Scott topology on $Mod(\tau)$ as a subspace of the Scott topology on $2^{\omega}$. We use this fact to obtain a new pullback theorem for computable embeddings. A computable embedding between two classes $\K$ and $\K'$ is an enumeration operator $\Phi$ such that for $\A\in \K$, $\Phi(\A)\in K'$ and $\A\cong \B$ if and only if $\Phi(\A)\cong \Phi(\B)$~\cite{calvert2004}. Our pullback theorem says that for $\phi$ a $\Pi^p_\alpha$ formula in the vocabulary of $\K'$, there is $\phi^*$, a $\Pi^p_\alpha$ formula in the vocabulary of $\K$ such that for all $\A\in \K$, $\A\models \phi^*$ if and only if $\Phi(\A)\models \phi$ (\cref{thm:pullback}).

Computable embeddings and their classical analogue, Turing computable embeddings~\cite{knight2007}, have been heavily studied in computable structure theory over the last decade. One question that has seen interest is to find a dividing line between these two types of reducibilities~\cite{calvert2004,chisholm2007,kalimullin2012,kalimullin_computable_2018,bazhenov_computable_2021}: What properties must classes of structures $\K$ and $\K'$ have so that there is a Turing computable embedding from $\K$ to $\K'$ but no computable embedding? Our pullback theorem is a useful tool for investigations of this kind. To demonstrate this we study classes $\K$ and their sister classes $\tilde \K$ where $\tilde\A$ is obtained from $\A$ by introducing an infinite equivalence relation $E$ on $\omega$ where all equivalence classes are infinite such that $\tilde\A/E\cong \A$. We continue work of Ganchev, Kalimullin, and Vatev~\cite{kalimullin2018} and study when we can have a computable embedding from $\tilde\K$ into $\K$. We show that for $\L$ the class of linear orderings, any two structures in $\tilde\L$ satisfy the same $\Sigma_2^p$ sentences. Thus, if $\K\subseteq \L$, and the orderings in $\K$ are distinguishable by $\Sigma^p_2$ sentences, we can not have a computable embedding $\tilde\K \to \K$. Using this we show that there is no embedding from $\{\tilde\omega,\tilde{\omega}^*\}$ to $\{\omega,\omega^*\}$, answering a question of Kalimullin (\cref{thm:omegaomegastar}).

\section{Preliminaries}\label{sec:preliminaries}
\subsection{Scott topology}
Let $(P,\leq)$ be a partial order. A set $D\subseteq P$ is \emph{directed} if
$D\neq \emptyset$ and for all $a,b\in D$, there is $d\in D$ with $a\leq d$ and
$b\leq d$. A partial order $P$ is a \emph{directed complete partial order
(dcpo)} if every directed $D\subseteq P$ has a supremum $\sup D$ in $P$.
A set $U$ is \emph{Scott open} if $U$ is an upper set, i.e., $x\in U$ and
$x\leq y\implies y\in U$, and for every directed set $D$ with $\sup D\in U$ we have
that $D\cap U\neq \emptyset$. The Scott open sets of $P$ form a topology on
$P$, the \emph{Scott topology}.

Notice that the Scott topology on any partial ordered set that contains a chain
of size larger than $2$ is not Hausdorff. To see this just note that for all
$x,y\in P$ if $x\leq y$ then every Scott open set containing $x$ contains
$y$. So, in particular, it is not Polish. 

As an example of such a space consider the Scott topology on $2^\omega$
equipped with the dcpo given by $f\subseteq g$ if $f(i)=1 \implies g(i)=1$. It
has a natural countable basis given by the basic open sets
\[2^\omega, \emptyset, \text{ and } O_n = \{f \in 2^\omega \mid f(n) = 1\}
\text{ for all }n\in\omega.\]
The Scott topology on $2^\omega$ is what is commonly referred to as an
$\omega$-continuous domain. In fact it is an $\omega$-algebraic domain and thus
a quasi-Polish space~\cite{debrecht2013}. The spaces we consider in
this paper will be closed subspaces of the Scott topology on $2^\omega$.

Let $\tau$ be a countable relational vocabulary containing $=$ and $\neq$. Denote by $Mod(\tau)$ the set of
countable $\tau$-structures with universe $\omega$. Similarly, if $\phi$ is a $\tau$-formula,
denote by $Mod(\phi)$ the set of models of $\phi$. There is a natural dcpo on $Mod(\tau)$ given by
\[\A\preceq \B \defiff \forall R\in \tau (R^\A\subseteq R^\B).\]
For sake of readibility we will refer to the Scott topology on
$(Mod(\tau),\preceq)$ simply as the Scott topology on $Mod(\tau)$. It might
seem odd to the reader that we include both $=$ and $\neq$ in the vocabulary.
The reason for this is that we want to identify the Scott open subsets of
$Mod(\tau)$ with subsets that are definable by existential formulas without negation. If $\neq$
is not in the vocabulary, then this is generally not possible for the emptyset. With $\neq$
in the vocabulary we can always define the emptyset by the formula $\exists
x\; x\neq x$ regardless of other properties of $\tau$. 

  In order to use tools from computability theory, it is convenient to represent
$\A\in Mod(\tau )$ by
their atomic\footnote{When talking about computable structures, there is often no algorithmic difference between atomic and quantifier-free formulas. This often leads to identifying atomic and quantifier-free diagrams, the two terms being then used interchangably. Here we really do mean atomic formulas, in particular, no negations are allowed.} diagram -- the infinite binary string 
\[ D_\A(n)=\begin{cases} 1 & \A \models \phi^{at}_n [x_i\to i] \\
0 & \text{otherwise}\end{cases}\]
where $\phi^{at}_n$ is the $n^{th}$ formula in an effective enumeration of all
atomic  $\tau$-formulas with free variables a subset of $\{x_0,\dots,x_n\}$.
Notice that this representation gives a homeomorphism between
$(Mod(\tau),\preceq)$ and the closed subset of $(2^{\omega},\subseteq)$ given
by
\[\bigcap_{n\in \{\code{x_i=x_j}:i\neq j\in\omega\}} \comp{O_{n}}\ \cap\ \bigcap_{n\in\{\code{x_i\neq x_i}:i\in\omega\}} \comp{O_n}.\]
Notice that the Scott topology on $Mod(\tau)$
is homeomorphic to the subspace topology on this space. By the same idea we can view natural classes of structures such as linear orderings as closed subspaces of $2^\omega$. 
%The Scott topology on $2^\omega$, and, through this also on $Mod(\tau)$ has
%a natural basis given by the basic open sets
%\[2^\omega, \emptyset, \text{ and } O_n = \{f \in 2^\omega \mid f(n) = 1\}
%\text{ for all }n\in\omega.\]
%\todo{it is necessary to include $2^\omega,\emptyset$}
%\todo{is it necessary to include $\neq$ in the vocabulary for lopez-escobar? it
%might be more convenient to introduce it later}
%\begin{itemize}
%\item
%  Basic open sets in the Scott topology are the sets
%  \[O_n = \{A \in 2^\omega \mid A(n) = 1\}.\]
%\item
%  In the Cantor topology, the basic open sets are the sets
%  \[O_{n,i} = \{A \in 2^\omega \mid A(n) = i\}, \]
%  where $i = 0,1$.
%\item
%  Fix a relational language $\L$ including $=$ and $\neq$.
%  For an $\L$-structure $\A$, we define $\D_\A \in 2^\omega$ as:
%  \[\D_\A(n) =
%    \begin{cases}
%      1, & \text{if } \A\models \varphi^{at}_n[x_i \mapsto i]\\
%      0, & \text{otherwise}
%    \end{cases}\]
%  % Notice that we said ,,otherwise'' instead of $\A\models \neg \varphi^{at}_n[x_i \mapsto i]$.
%\item
%  $\A \subseteq \B$ means $R^\A_i \subseteq R^\B_i$, for any relation symbol $R_i$ in $\L$.
%\item
%  Define $\Mod(\L)$ and $\Mod(\varphi)$.
%\item
%  $\Gamma : \Mod(\L) \to \Mod(\L)$ is a continuous mapping if
%  \begin{itemize}
%  \item
%    For any two $\L$-structures $\A$ and $\B$,
%    $\A \subseteq \B$  implies $\Gamma(\A) \subseteq \Gamma(\B)$.
%  \item
%    For any infinite sequence of $\L$-structures $\{\A_i\}_{i\in\omega}$,
%    $\Gamma(\bigcup_i\A_i) = \bigcup_i \Gamma(\A_i)$.
%  \end{itemize}
%  
%\end{itemize}
\subsection{The Borel hierarchy for non-metrizable spaces} 
The Borel sets of a Polish space bear a natural structures in terms of the
Borel hierarchy. This is witnessed by the fact that the $\bSigma^0_2$ sets
are precisely the $F_\sigma$ subsets and the $\bPi^0_2$ sets coincide with the
$G_\delta$ subsets of the space. In non-Hausdorff spaces we can not find this
correspondence in general. It is common for non-Hausdorff spaces to have open sets that are not $F_\sigma$ (i.e., countable unions of closed sets)
and closed sets that are not $G_\delta$ (i.e., countable intersections of open sets).
One example of this phenomenon is the Sierpi{\'n}ski space, which has $\{\bot,\top\}$ as underlying set and the singleton $\{\top\}$ open but not closed.
This implies that the classical definition of the Borel hierarchy, which defines level $\bSigma^0_2$ as the $F_\sigma$-sets
and $\bPi^0_2$ as the $G_\delta$-sets, is not appropriate in the general setting.
%%% Dino: I do not believe that this way of defining the hierarchy is due to
% Selivanov but that it is in general quite standard. It is only that for Polish
% spaces you have an easier definition?
%In order to obtain a proper hierarchy we use the following modification of the
%Borel hierachy due to Selivanov~\cite{selivanov2006}.
\begin{definition}[\cite{selivanov2006}]
  Let $(X,\tau)$ be a topological space. For each countable ordinal $\alpha \geq 1$ we define $\bSigma^0_\alpha(X,\tau)$ inductively as follows.
  \begin{enumerate}
  \item
    $\bSigma^0_1(X,\tau) = \tau$.
  \item
    For $\alpha > 1$, $\bSigma^0_\alpha(X,\tau)$ is the set of all subsets $A$ of $X$ which can be expressed in the form
    \[A = \bigcup_{i\in\omega}B_i \setminus B'_i,\]
    where for each $i$, $B_i$ and $B'_i$ are in $\bSigma^0_{\beta_i}(X,\tau)$ for some $\beta_i < \alpha$.
  \end{enumerate}
  We define $\bPi^0_\alpha(X,\tau) = \{X \setminus A \mid A \in \bSigma^0_\alpha(X,\tau)\}$
  and $\bDelta^0_\alpha(X,\tau) = \bSigma^0_\alpha(X,\tau) \cap \bPi^0_\alpha(X,\tau)$.
  Finally, we define $\mathbf{B}(X,\tau) = \bigcup_{\alpha<\omega_1} \bSigma^0_\alpha(X,\tau)$ to be the \emph{Borel} subsets of $(X,\tau)$.
\end{definition}
The definition above is equivalent to the classical definition of the Borel hierarchy on metrizable spaces, but differs in general.
Selivanov has investigated this hierarchy in a series of papers (\cite{selivanov2005,selivanov2006}), with an emphasis on applications to $\omega$-continuous
domains. He shows (Proposition 5.3 in \cite{selivanov2005}) that the classical
Borel hierarchy on Cantor space differs from the new Borel hierarchy on the Scott space at finite levels
and coincides at infinite levels. 
\subsection{A hierarchy of positive infinitary formulas}
In the classical setting the Lopez-Escobar theorem establishes a correspondence
between subsets of $Mod(\tau)$ defined by sentences in the infinitary logic
$L_{\omega_1\omega}$ and the Borel sets. Our theorem relates the Borel sets in
the Scott topology with subsets defined by positive infinitary sentences
defined below. See Ash and Knight~\cite{ash2000} for an indepth treatment of infinitary logic
that goes in hand with the developments in this section.
% DINO: I replaced \equiv with = as this definition is syntactical.
\begin{definition}\label{def:posinf}
  Fix a countable vocabulary $\tau$. For every $\alpha<\omega_1$ define the sets of $\Sigma^p_\alpha$ and
  $\Pi^p_\alpha$ $\tau$-formulas inductively as follows.
\begin{itemize}
\item
  Let $\alpha = 0$. Then:
  \begin{itemize}
  \item
    the $\Sigma^p_0$ formulas are the finite conjunctions of atomic $\tau$-formulas.
  \item
    the $\Pi^p_0$ formulas are the finite disjunctions of \emph{negations} of
    atomic $\tau$-formulas.
  \end{itemize}
\item
  Let $\alpha = 1$. Then:
  \begin{itemize}
  \item
    $\varphi(\bar{u})$ is a $\Sigma^p_1$ formula if it has the form
    \[\varphi(\bar{u}) = \vvee_{i\in I} \exists \bar{x}_i \psi_i(\bar{u},\bar{x}_i), \]
    where for each $i \in I$, $\psi_i(\bar{u},\bar{x}_i)$ is a $\Sigma^p_0$ formula, $I$ is countable.
  \item
    $\varphi(\bar{u})$ is a $\Pi^p_1$ formula if it has the form
    \[\varphi(\bar{u}) = \wwedge_{i\in I} \forall \ov{x}_i \psi_i(\bar{u},\bar{x}_i), \]
    where for each $i \in I$, $\psi_i(\bar{u},\bar{x}_i)$ is a $\Pi^p_0$ formula, $I$ is countable.    
  \end{itemize}
\item
  Let $\alpha \geq 2$. Then:
  \begin{itemize}
  \item
    $\varphi(\bar{u})$ is $\Sigma^p_{\alpha}$ formula if it has the form
    \[\varphi(\bar{u}) = \vvee_{i\in I} \exists \bar{x}_i(\phi_i(\bar{u},\bar{x}_i) \land \psi_i(\bar{u},\bar{x}_i)), \]
    where for each $i \in I$, $\phi_i(\bar{u},\bar{x}_i)$ is a $\Sigma^p_{\beta_i}$ formula and $\psi_i(\bar{u},\bar{x}_i)$ is $\Pi^p_{\beta_i}$ a formula, for some $\beta_i < \alpha$ and $I$ countable.
  \item
    $\varphi(\bar{u})$ is $\Pi^p_{\alpha}$ formula if it has the form
    \[\varphi(\bar{u}) = \wwedge_{i\in I} \forall \bar{x}_i(\phi_i(\bar{u},\bar{x}_i) \lor \psi_i(\bar{u},\bar{x}_i)), \]
    where for each $i \in I$, $\phi_i(\bar{u},\bar{x}_i)$ is a $\Sigma^p_{\beta_i}$ formula and $\psi_i(\bar{u},\bar{x}_i)$ is a $\Pi^p_{\beta_i}$ formula, for some $\beta_i < \alpha$ and $I$ countable.
  \end{itemize}
\end{itemize}
The set of positive infinitary formulas, $\posinf$, is the smallest subset of
$L_{\omega_1\omega}$ closed under logical equivalence containing all $\Pi^p_\alpha$ formulas for
$\alpha<\omega_1$.%\todo{maybe there is a way to say this more elegantly.
%i alsow wanted to make the point that these formulas are a subset of
%$L_{\omega_1\omega}$.}
\end{definition}
It is not hard to see that every $L_{\omega_1\omega}$ formula is
logically equivalent to a $\posinf$-formula and that these two sets are thus equivalent. However, as should be evident from the definition, the hierarchies
differ at finite levels.

We say that a  formula is in \emph{normal form} if it is $\Sigma^p_\alpha$ or
$\Pi^p_\alpha$ for some $\alpha<\omega_1$. An easy transfinite induction shows
that every $L_{\omega_1\omega}$ formula is equivalent to a formula in a normal form.

Consider a $\Sigma^p_1$ formula $\phi=\vvee_{i\in I}\exists \bar x_i \psi_i(\bar
x_i)$ in a computable vocabulary $\tau$. We can fix an injective computable enumeration of the
conjunctions of atomic $\tau$-formulas and assume without loss of generality that $i=\code{\psi_i}$ is the index
of $\psi_i$ in this enumeration. Let $X\subseteq \omega$ such that $I$ is
$X$-c.e., then we say that $\phi$ is an $X$-computable $\Sigma^p_1$ formula, or
short, $\phi\in \pXS_1$. We will denote the set of $\Sigma^p_1$
formulas such that $I$ is c.e., as the $\cS_1$ formulas, where $c$ stands
for computable.

We could intuitively generalize the above definition to arbitrary $X$-computable
$\alpha$ and $X$-computable $\Pi^p_\alpha$ formulas by demanding that all the index sets $I$
are $X$-c.e. However, doing this formally requires some care. Let us give the
formal definition for computable $\Sigma^p_\alpha$ and $\Pi^p_\alpha$. It
should be obvious that this definition relativizes.

%\subsection{Index sets for positive computable infinitary formulas}
\begin{definition}\label{def:compposinf}
For $a \in \mathcal{O}$, we define the index sets $S^\Sigma_a$ and $S^\Pi_a$ as follows:
\begin{itemize}
\item
  Let $|a|_{\Ord} = 0$ and
  %\begin{itemize}
  %\item
    $S^\Sigma_1 = \{\code{\varphi} \mid \varphi\text{ a finitary conjunction
    of atomic $\tau$-formulas}\}$%, and
    %\todo{I removed the definition of $S^\Pi_1$ as it seems redundant to me.
    %  I also had to change the def. of $\cP_2$ formulas of course.} 
  %\item
  %  $S^\Pi_1 = \{\code{\varphi} \mid \varphi\text{ is a finitary disjunction of \emph{negations} of atomic $\mathcal{L}$-formulas}\}$.
  %\end{itemize}
\item
  Let $|a|_{\mathcal{O}} > 0$. Then:
  \begin{itemize}
  \item
    $S^\Sigma_{a} = \{\pair{\Sigma,a,\overline{x},e} \mid \overline{x} \text{ is a tuple of variables, } e \in \omega\}$,
  \item
    $S^\Pi_{a} = \{\pair{\Pi,a,\overline{x},e} \mid \overline{x} \text{ is a tuple of variables, } e \in \omega\}$.
  \end{itemize}
\end{itemize}

We define the positive infinitary formula $\varphi_i(\overline{u})$ with index $i$ as follows.

\begin{itemize}
\item
  Let $|a|_{\Ord} = 0$.
  If $i \in S^\Sigma_1$, %or $ i \in S^\Pi_1$, \todo{S^\Pi_1 is removed}
  then $\varphi_i(\overline{u})$ is
  the $\tau$-formula with G\"odel number $i$.
\item
  Let $|a|_{\mathcal{O}} = 1$. Then:
  \begin{itemize}
  \item
    If $i \in S^\Sigma_a$, then $i = \pair{\Sigma,a,\ov{u},e}$, for some tuple $\ov{u}$ and index $e$, and
    $\varphi_i(\overline{u})$ is the disjunction of formulas $(\exists\overline{x})\phi_j(\overline{u},\overline{x})$, where $j \in W_e \cap S^\Sigma_1$.
    %Moreover, the third component of the index $j$ is the tuple $(\overline{u},\overline{x})$.
  \item
    If $i \in S^\Pi_a$, then $i = \pair{\Pi,a,\ov{u},e}$, for some tuple $\ov{u}$ and index $e$, and
    $\varphi_i(\overline{u})$ is the conjunction of formulas
    $(\forall\overline{x})\neg \phi_j(\overline{u},\overline{x})$, where $j \in
    W_e \cap S^\Sigma_1$.
    %Moreover, the third component of the index $j$ is the tuple $(\overline{u},\overline{x})$.
  \end{itemize}
\item
  Let $|a|_{\mathcal{O}} \geq 2$. Then:
  \begin{itemize}
  \item
    If $i \in S^\Sigma_a$, then
    $i = \pair{\Sigma,a,\ov{u},e}$, for some tuple $\ov{u}$ and index $e$, and $\varphi_i(\ov{u})$ is the
    disjunction of formulas $(\exists\ov{x})[\phi_\ell(\ov{u},\ov{x}) \land \psi_r(\ov{u},\ov{x})]$,
    where $\pair{\ell,r} \in W_e$ and $\ell \in S^\Sigma_b$ and $r \in S^\Pi_b$, for some $b <_{\Ord} a$ with $|b|_{\mathcal{O}} > 0$.
    Moreover, the third components of the indices $\ell$ and $r$ is the tuple $(\ov{u},\ov{x})$.
  \item
    If $i \in S^\Pi_a$, then
    $i = \pair{\Pi,a,\ov{u},e}$, for some tuple $\ov{u}$ and index $e$, and $\varphi_i(\ov{u})$ is the
    conjunction of formulas $(\forall\ov{x})[\phi_\ell(\ov{u},\ov{x}) \lor \psi_r(\ov{u},\ov{x})]$,
    where $\pair{\ell,r} \in W_e$ and $\ell \in S^\Sigma_b$ and $r \in S^\Pi_b$, for some $b <_{\Ord} a$ with $|b|_{\mathcal{O}} > 0$.
    Moreover, the third components of the indices $\ell$ and $r$ is the tuple $(\ov{u},\ov{x})$.
  \end{itemize}
\end{itemize}
If $i\in S_a^\Sigma$ ($S_a^\Pi$) with $|a|=\alpha$, then $\varphi_i\in \cS_\alpha$
($\cP_\alpha$). A formula $\phi$ is a \emph{computable positive infinitary
formula} if $\phi\in L_{c\omega}^p=\bigcup_{\alpha<\omega_1^{\mathrm CK}}
  \Sigma_\alpha^{pc}\cup \Pi_\alpha^{pc}$.
\end{definition}
\cref{def:compposinf} can be relativized to arbitary $X\subseteq\omega$
by replacing occurences of $\mathcal O$ and
$W_e$ with $\mathcal O^X$ and $W_e^X$. This allows us to obtain $\pXS_\alpha$
and $\pXP_\alpha$ formulas. As every $L^p_{\omega_1\omega}$ formula $\varphi$ is
$\Pi^p_\alpha$ for some countable ordinal $\alpha$, $\varphi$ is $\pXP_\alpha$ for $X$ the computable join of
the index sets in the definition of $\varphi$ and a set $Y$ with $\alpha <
\omega_1^Y$. Hence, a formula is $\Pi^p_\alpha$ if and only if it is
$\pXP_\alpha$ for some $X\subseteq\omega$.

\begin{definition}\label{def:posinfnegation}%[$neg(\varphi)$]
  We define $neg(\varphi)$ following the definition of $\varphi$.
  \begin{itemize}
  \item
    Let $\alpha = 0$.
    \begin{itemize}
    \item
      If $\varphi$ is $\Sigma^p_0$, i.e., $\varphi$ is a conjunction over some finite set $I$ of the atomic formulas $\varphi^{at}_i$, where $i \in I$,
      then $neg(\varphi)$ is the finite disjunction over the same finite set $I$ of $\neg \varphi^{at}_i$, where $i \in I$.
    \item
      If $\varphi$ is $\Pi^p_0$, i.e., $\varphi$ is a disjunction over some finite set $I$ of $\neg \varphi^{at}_i$, where $i \in I$,
      then $neg(\varphi)$ is the conjunction over the same finite set $I$ of $\varphi^{at}_i$, where $i \in I$.
    \end{itemize}
  \item
    Let $\alpha = 1$.
    \begin{itemize}
    \item
      If $\varphi$ is $\Sigma^p_1$, i.e., $\varphi$ is an infinitary disjunction over a set of formulas of the form $\exists \bar{x} \psi$,
      then $neg(\varphi)$ is an infinitary conjunction over the same set of the formulas $\forall \bar{x} neg(\psi)$.
    \item
      If $\varphi$ is $\Pi^p_1$, i.e.,
      $\varphi$ is an infinitary conjunction over a set of formulas of the form $\forall \bar{x} \psi$,
      then $neg(\varphi)$ is an infinitary disjunction over the same set of the formulas $\exists \bar{x} neg(\psi)$.
    \end{itemize}
  \item
    Let $\alpha \geq 2$.
    \begin{itemize}
    \item
      If $\varphi$ is $\Sigma^p_{\alpha}$ formula, i.e.,
      $\varphi$ is an infinitary disjunction over a set of formulas of the form $\exists \bar{x}(\phi \land \psi)$,
      then $neg(\varphi)$ is an infinitary conjunction over the same set of the formulas $\forall \bar{x}(neg(\phi) \lor neg(\psi))$.
    \item
      If $\varphi$ is $\Pi^p_{\alpha}$ formula, i.e.,
      $\varphi$ is an infinitary conjunction over a set of formulas of the form $\forall \bar{x}(\phi \lor \psi)$,
      then $neg(\varphi)$ is an infinitary disjunction %conjunction 
      over the same set of the formulas $\exists \bar{x}(neg(\phi) \land neg(\psi))$.
    \end{itemize}
  \end{itemize}
  If $\phi$ is not in normal form and thus not in $\Sigma^p_\alpha$ or $\Pi^p_\alpha$ for any $\alpha<\omega_1$, then we can find a formula in normal form $\psi$ such that $\psi\equiv \phi$ and let $neg(\phi)=neg(\psi)$.
\end{definition}

The above definition makes the proof of the following proposition self-evident.
\begin{proposition}
  For any $L^p_{\omega_1\omega}$ formula $\varphi$, $neg(neg(\varphi)) \equiv \varphi$.
  Furthermore, if $\varphi\in \cP_\alpha$, then $neg(\varphi)\in \cS_\alpha$
  and $neg(neg(\varphi))=\varphi$.
\end{proposition}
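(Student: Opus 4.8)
The plan is to argue by transfinite induction on the least $\alpha$ for which $\varphi$ is in normal form, i.e.\ $\Sigma^p_\alpha$ or $\Pi^p_\alpha$, establishing along the way two auxiliary facts: (i) $neg$ maps $\Sigma^p_\alpha$ to $\Pi^p_\alpha$ and $\Pi^p_\alpha$ to $\Sigma^p_\alpha$, and likewise $\cS_\alpha$ to $\cP_\alpha$ and $\cP_\alpha$ to $\cS_\alpha$; and (ii) on normal-form formulas, $neg(neg(\varphi)) = \varphi$ \emph{syntactically}. The statement for arbitrary $L^p_{\omega_1\omega}$-formulas then drops out of the last clause of \cref{def:posinfnegation} together with the remark after \cref{def:posinf} that every formula is equivalent to one in normal form.

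The verification of (i) and (ii) is a direct reading of the clauses of \cref{def:posinfnegation}. In the base case $\alpha = 0$, $neg$ sends a finite conjunction $\wwedge_{i\in I}\varphi^{at}_i$ of atomic formulas to the finite disjunction $\vvee_{i\in I}\neg\varphi^{at}_i$ over the \emph{same} index set $I$, and vice versa; applying $neg$ twice restores the original formula verbatim, and the $\Sigma^p_0\leftrightarrow\Pi^p_0$ swap is immediate. For $\alpha = 1$ and $\alpha \geq 2$ the clauses keep the index set $I$ of the outer disjunction/conjunction, keep each quantified tuple $\ov{x}_i$, flip $\vvee\leftrightarrow\wwedge$ and $\exists\leftrightarrow\forall$, and replace each immediate subformula $\chi$ by $neg(\chi)$. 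By the induction hypothesis each such $\chi$ has level $\beta_i < \alpha$, so $neg(\chi)$ has the same level $\beta_i$ with the $\Sigma/\Pi$ type exchanged; hence $neg(\varphi)$ again has the shape required of a $\Pi^p_\alpha$- (resp.\ $\Sigma^p_\alpha$-) formula. The only bookkeeping point is that a binary $\phi\lor\psi$ (resp.\ $\phi\land\psi$) is a disjunction (conjunction) over the two-element set $\{\phi,\psi\}$, so the exchange of the $\Sigma$- and $\Pi$-component needed to fit $neg(\varphi)$ to the template of \cref{def:posinf} is harmless and is undone by the second application of $neg$. Applying $neg$ a second time, the induction hypothesis gives $neg(neg(\chi)) = \chi$ for every immediate subformula, and all outer data ($I$, the tuples $\ov{x}_i$, the connectives) have been restored, so $neg(neg(\varphi)) = \varphi$.

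For the computable refinement one runs the same induction along \cref{def:compposinf}: the construction of $neg$ is uniform in the index, sending a formula with index $\pair{\Sigma,a,\ov{u},e}$ to one with index $\pair{\Pi,a,\ov{u},e'}$ (and conversely), for an $e'$ obtained effectively from $e$ and the c.e.\ sets $W_b$ occurring further down (at level $\alpha=1$ one may even take $e'=e$), and it leaves the notation $a\in\Ord$, hence the level $\alpha = |a|_{\Ord}$, fixed. Thus $neg$ sends $\cP_\alpha$ into $\cS_\alpha$ and $\cS_\alpha$ into $\cP_\alpha$; and since $neg(\varphi)$ is then itself in normal form, the syntactic identity $neg(neg(\varphi)) = \varphi$ from the previous paragraph applies, giving the ``furthermore'' part. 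Finally, for arbitrary $\varphi\in L^p_{\omega_1\omega}$ not in normal form, \cref{def:posinfnegation} sets $neg(\varphi) = neg(\psi)$ for some normal-form $\psi\equiv\varphi$; by (i), $neg(\psi)$ is again in normal form, so $neg(neg(\varphi)) = neg(neg(\psi)) = \psi \equiv \varphi$ — here only logical equivalence, not syntactic equality, because of the passage through $\psi$. There is no genuine obstacle; the only step deserving attention is the one flagged above, namely that double negation is literally the identity on normal-form formulas, which rests on treating finite conjunctions and disjunctions as indexed by sets so that reordering the two components of a binary connective does not change the formula.
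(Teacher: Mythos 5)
Your proof is correct and takes essentially the same route as the paper, which offers no argument beyond declaring the proposition self-evident from \cref{def:posinfnegation}: your transfinite induction (with the swap $\Sigma^p_\alpha\leftrightarrow\Pi^p_\alpha$, the effective index transformation for the computable case, and the bookkeeping remark about reading binary connectives as indexed by a two-element set) is exactly that verification written out explicitly. Nothing further is needed.
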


Following \cite[Chapter V.4]{montalban2}, consider $\posinf$ formulas in the language of arithmetic with one extra unary relation symbol $D$, to represent the diagram of the structure, which we treat as a second-order variable.
This means that we allow atomic formulas of the form $D(x)$.
We will refer to these formulas as $\PropL$ formulas.
We can now talk about $\PropL$ definable subsets of $2^{\omega}$ (or $\Mod(\tau)$)
in the standard model of arithmetic.
A set of natural numbers $A$ is \emph{$\PropL$-definable}
if there is a $\PropL$ formula $\phi$ such that
$(\Nat,A)\models \phi$ where $\Nat$ is the standard model of arithmetic.
By identifying elements of $2^{\omega}$ and $\Mod(\tau)$ as
characteristic functions of sets we get the notion of definability for these elements.
Furthermore, it is not hard to see that
within this model every $\PropL$ formula is equivalent to
an $\PropL$ formula without quantifiers,
and that this quantifier elimination can be done without
increasing the complexity of the formula.
To see this, simply note that we can
replace formulas of the form $\exists x \phi(x)$ by $\vvee_{n\in\omega} \phi(\mathbf n)$
and formulas of the form $\forall x \phi(x)$ by $\wwedge_{n\in\omega}
\phi(\mathbf n)$ where $\mathbf n$ is the formal term for $n$ in $\Nat$. Notice
that the atomic subformulas of a quantifier-free $\PropL$ formula are either
$D(\mathbf n)$, $\top$, or $\bot$. We will refer to $\Sigma^p_\alpha$ formulas
of this form as $\pSigma^p_\alpha$ formulas and to $\Pi^p_\alpha$ formulas as
$\pPi^p_\alpha$ formulas.

%Similarly as in the case of $\posinf$, we can index and describe computable infinitary
%$\PropL$ formulas. We start with computable sets of indices $PS^\Sigma_a$ and $PS^\Pi_a$ for
%$a \in \Ord$ and then we build up a hierarchy of $\Nat\hbox{-}\Sigma^p_\alpha$ and $\Nat\hbox{-}\Pi^p_\alpha$ formulas.
%
\begin{proposition}\label{prop:borelandN}
  %Let $X\subseteq2^\omega$. Then $X$ is $\bSigma^0_{\alpha}$ if and only if there
  %is a $\PropL$-formula $\varphi\in \Sigma^p_\alpha$ such that $X=\{ D: (\mathbb
  %N,D)\models \varphi\}$.
  For all non-zero $\alpha\leq\omega_1$,
  \begin{enumerate}
    \item $\bSigma^0_\alpha(2^\omega)=Mod(\pSigma^p_\alpha)=\left\{\{ X\in 2^\omega: (\mathbb
  N,X)\models \phi\}:\phi\in \pSigma^p_\alpha\right\}$
    \item $\bPi^0_\alpha(2^\omega)=Mod(\pPi^p_\alpha)=\left\{\{ X\in 2^\omega: (\mathbb
  N,X)\models \phi\}:\phi\in \pPi^p_\alpha\right\}$
  \end{enumerate} \end{proposition}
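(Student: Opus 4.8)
The plan is to prove both statements simultaneously by transfinite induction on $\alpha$ with $1\le\alpha\le\omega_1$, after two preliminary remarks. First, by the quantifier elimination noted above, every $\PropL$ sentence is equivalent, with no increase of its level, to a quantifier-free one whose only atomic subformulas are $D(\mathbf n)$ for $n\in\omega$, $\top$, and $\bot$; so from now on I take all formulas to be quantifier-free, and for such formulas $Mod$ commutes with $\vvee$ and $\wwedge$ (sending them to $\bigcup$ and $\bigcap$), with $Mod(D(\mathbf n))=O_n$ and $Mod(\neg D(\mathbf n))=\comp{O_n}$. A routine induction through the clauses of \cref{def:posinfnegation} then yields $Mod(neg(\varphi))=\comp{Mod(\varphi)}$ for every such $\varphi$. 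Second, $neg$ carries quantifier-free $\pSigma^p_\alpha$ sentences bijectively onto quantifier-free $\pPi^p_\alpha$ sentences, so at every level part~(2) is immediate from part~(1): $Mod(\pPi^p_\alpha)=\{\comp{A}:A\in Mod(\pSigma^p_\alpha)\}$, while $\bPi^0_\alpha(2^\omega)=\{\comp{A}:A\in\bSigma^0_\alpha(2^\omega)\}$ by definition. Thus it suffices to prove part~(1).

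For the base case $\alpha=1$ I would first record that the sets $[F]:=\{X\in 2^\omega:F\subseteq X\}=\bigcap_{n\in F}O_n$, for $F$ a finite subset of $\omega$, form a countable base of the Scott topology on $2^\omega$: each is Scott open, and if $U$ is Scott open with $X\in U$ then, since the finite subsets of $X$ form a directed set with supremum $X$, some finite $F\subseteq X$ lies in $U$, whence $X\in[F]\subseteq U$. Now a quantifier-free $\pSigma^p_0$ sentence is a finite conjunction of $D(\mathbf n)$'s, $\top$'s and $\bot$'s, so its model set is some $[F]$ or $\emptyset$; a quantifier-free $\pSigma^p_1$ sentence is a countable disjunction of such, so $Mod(\pSigma^p_1)$ is exactly the family of countable (possibly empty) unions of the sets $[F]$, which, the base being countable, is precisely the Scott topology $\tau=\bSigma^0_1(2^\omega)$.

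For the inductive step $2\le\alpha<\omega_1$ I would translate directly between the expressions $\bigcup_i B_i\setminus B_i'$ defining $\bSigma^0_\alpha(2^\omega)$ and the expressions $\vvee_i(\phi_i\wedge\psi_i)$ defining $\pSigma^p_\alpha$, mediated by $neg$. For $Mod(\pSigma^p_\alpha)\subseteq\bSigma^0_\alpha(2^\omega)$: given $\varphi=\vvee_{i\in I}(\phi_i\wedge\psi_i)$ with $\phi_i\in\pSigma^p_{\beta_i}$, $\psi_i\in\pPi^p_{\beta_i}$, $\beta_i<\alpha$, the inductive hypothesis for part~(1) (together with $Mod(\pSigma^p_0)\subseteq\bSigma^0_1(2^\omega)$ from the base case, used when $\beta_i=0$) gives $Mod(\phi_i),Mod(neg(\psi_i))\in\bSigma^0_{\gamma_i}(2^\omega)$ with $\gamma_i=\max(\beta_i,1)<\alpha$; then $Mod(\phi_i\wedge\psi_i)=Mod(\phi_i)\setminus Mod(neg(\psi_i))$ is a single difference of $\bSigma^0_{\gamma_i}$ sets, so $Mod(\varphi)=\bigcup_{i\in I}Mod(\phi_i\wedge\psi_i)\in\bSigma^0_\alpha(2^\omega)$. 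Conversely, given $A=\bigcup_{i\in\omega}B_i\setminus B_i'$ with $B_i,B_i'\in\bSigma^0_{\beta_i}(2^\omega)$ and $1\le\beta_i<\alpha$, the inductive hypothesis gives $\phi_i,\sigma_i\in\pSigma^p_{\beta_i}$ with $Mod(\phi_i)=B_i$ and $Mod(\sigma_i)=B_i'$; setting $\psi_i:=neg(\sigma_i)\in\pPi^p_{\beta_i}$ one gets $B_i\setminus B_i'=Mod(\phi_i\wedge\psi_i)$, and $\vvee_{i\in\omega}(\phi_i\wedge\psi_i)$, with empty quantifier blocks (or a dummy variable, if a nonempty block is insisted upon), is a quantifier-free $\pSigma^p_\alpha$ sentence defining $A$. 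Finally, for $\alpha=\omega_1$ one has $\bSigma^0_{\omega_1}(2^\omega)=\bigcup_{\beta<\omega_1}\bSigma^0_\beta(2^\omega)=\mathbf B(2^\omega)$ (a countable union of $\bSigma^0_{\beta_i}$ sets lies in $\bSigma^0_{\gamma+1}(2^\omega)$ for $\gamma=\sup_i\beta_i<\omega_1$) and symmetrically $Mod(\pSigma^p_{\omega_1})=\bigcup_{\beta<\omega_1}Mod(\pSigma^p_\beta)$ (every propositional positive infinitary sentence is equivalent to one in normal form, hence to a $\pSigma^p_\beta$ or $\pPi^p_\beta$ sentence, and in the latter case its model set lies in $\bPi^0_\beta(2^\omega)\subseteq\bSigma^0_{\beta+1}(2^\omega)=Mod(\pSigma^p_{\beta+1})$ by the cases already handled), so the $\omega_1$ case follows from the cases $\beta<\omega_1$.

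I do not expect a real obstacle; the substance is the dictionary relating countable disjunctions to countable unions, conjunctions-with-a-$\Pi$-part to set differences, and $neg$ to complementation, together with the identification of the basic open sets of the Scott topology on $2^\omega$. The points that need genuine care are: invoking the already-established quantifier elimination so that $Mod$ behaves well on the formulas in play; keeping the ordinal bookkeeping straight, in particular the level shift forced when $\beta_i=0$ and the inclusion $\bPi^0_\beta(2^\omega)\subseteq\bSigma^0_{\beta+1}(2^\omega)$ used at $\alpha=\omega_1$; and checking that the assembled countable disjunctions and conjunctions literally satisfy the syntactic requirements of \cref{def:posinf} (empty variable blocks, and $\top$ and $\bot$ as degenerate $\pSigma^p_0$ and $\pPi^p_0$ formulas).
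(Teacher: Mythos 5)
Your proposal is correct and follows essentially the same route as the paper's proof: quantifier elimination to propositional form, treating $neg$ as complementation, the Scott basic open sets for the base case, and the dictionary between $\vvee_i(\phi_i\wedge\psi_i)$ and $\bigcup_i B_i\setminus B_i'$ for the inductive step. Your extra touches (verifying that the sets $[F]$ form a base, the level shift at $\beta_i=0$, and the explicit $\alpha=\omega_1$ case) only make explicit what the paper leaves implicit.
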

\begin{proof} The proof is by induction on $\alpha$. We may assume that $\PropL$ formulas
  are quantifier free and all atomic subformulas are of the form $\top$, $\bot$
  or $D(\mathbf n)$ for some $n\in\omega$.
  If $\phi\in \pSigma^p_{0}$ contains the subformula $\bot$, then
  $Mod(\phi)=\emptyset$ and if $\phi=\top$, then $Mod(\phi)=2^\omega$.
  Otherwise, the corresponding open set $X_\phi$ is $\bigcap_{n\in\omega: D(\mathbf
  n)\in sf(\phi)} O_n$, where by $sf(\phi)$ we denote the subformulas of $\phi$.

  If $\phi\in \pSigma^p_1$, i.e., $\phi=\vvee_{i\in I}
  \psi_i$ with $\psi_i\in \pSigma_0^p$, then $Mod(\phi)=X_\phi=\bigcup_{i\in I}
  X_{\psi_i}\in \bSigma^0_1$. If $\phi\in \pPi_1^p$, then
  $\phi=neg(\psi)$ and $Mod(\phi)=Mod(neg(\psi))=\comp{X_\psi}\in \bPi^0_1$.
  On the other hand, if
  $B\in \bSigma^0_1$, i.e., $B=\bigcup_{i\in I}\bigcap_{n\leq N_i} O_{n}$, then
  let $\phi_B=\vvee_{i\in I} \wwedge_{n\leq N_i} D(\mathbf{n})$. If $B\in
  \bPi^0_1$, then $\comp{B}\in \bSigma^0_1$, $neg(\phi_{\comp{B}})\in
  \pPi^{p}_1$ and $B=Mod(neg(\phi_{\comp{B}}))$.

  Assume that for all $\beta<\alpha$ and all $\pSigma_\beta^p$ formulas $\phi$,
  there is a $\bSigma^0_\beta$ set $X_\phi$ such that $Mod(\phi)=X_\phi$.
  Let $\phi\in \pSigma^p_\alpha$, i.e., $\phi=\vvee_{i\in I} (\psi_i\land
  neg(\theta_i))$ for $\psi_i,\theta_i\in \pSigma^p_{\beta_i}$ with
  $\beta_i<\alpha$. Then $Mod(\phi)=\bigcup_{i\in I} X_{\psi_i}\setminus
X_{\theta_i}=X_\phi\in \bSigma^0_\alpha$. If $\phi\in \pPi^p_\alpha$, then
$\phi=neg(\psi)$ for $\psi\in \Sigma^p_\alpha$ and
$Mod(\phi)=Mod(neg(\psi))=\comp{X_\psi}\in \bPi^0_\alpha$.

  Suppose that for all $\beta<\alpha$ and all $\bSigma^0_\beta$ sets $B$,
  there is a $\pSigma^{p}_\beta$ formula $\phi_B$ such that $Mod(\phi_B)=B$.
  Assume that $B\in \bSigma^0_\alpha$, i.e., $B=\bigcup_{i\in I} B_i\setminus
  B_i'$ with $B_i,B_i'\in \bSigma^0_{\beta_i}$ for $\beta_i<\alpha$. Then
  $\phi_B=\vvee_{i\in I} (\phi_{B_i}\land neg(\phi_{B_i'}))\in \pSigma^p_\alpha$ and
  $Mod(\phi_B)=B$. At last, let $B\in \bPi^0_\alpha$, then $\comp{B}\in
  \bSigma^0_\alpha$, $neg(\phi_{\comp{B}})\in \pPi^p_\alpha$ and
  $Mod(neg(\phi_{\comp{B}}))=B$. 
  \end{proof}
\subsection{The effective Borel hierarchy}
As we have discussed after \cref{def:compposinf}, every positive infinitary
formula is computable relative to some oracle $X$.
\cref{prop:borelandN} associates to every Borel set a positive infinitary formula
$\phi$ in the language of arithmetic with an additional unary relation symbol. This motivates
the following definition of the effectively Borel hierarchy on $2^\omega$ and
through this on $Mod(\tau)$ for a given computable vocabulary $\tau$.
\begin{definition}
  Let $X\subseteq 2^\omega$ and $Y\subseteq \omega$.
  \begin{itemize}
    \item $X\in\Sigma^0_\alpha(Y)$ if there is
      a $\pXS[Y]_\alpha$ $\PropL$-formula $\varphi$ such that $X=\{D:(\mathbb N,D)\models
      \varphi\}$. A $\pXS[Y]_\alpha$ index for $\varphi$ is
      a $Y$-computable index for $X$.
\item $X\in\Pi^0_\alpha(Y)$ if there is 
      a $\pXP[Y]_\alpha$ $\PropL$-formula $\varphi$ such that $X=\{D:(\mathbb
      N,D)\models \varphi\}$. A $\pXP[Y]_\alpha$ index for $\varphi$ is
      a $Y$-computable index for $X$.

  \end{itemize}
  Furthermore, $X$ is \emph{effectively $\bSigma^0_\alpha$}, or $X\in
  \Sigma^0_\alpha$, if $X\in \Sigma^0_\alpha(\emptyset)$. Likewise, $X$ is
  \emph{effectively $\bPi^0_\alpha$}, or $X\in \Pi^0_\alpha$, if $X\in
  \Pi^0_\alpha(\emptyset)$. 
\end{definition}

\section{The Lopez-Escobar theorem for positive infinitary formulas}\label{sec:lopez-escobar}
The main goal of this section is to give a forcing proof of the Lopez-Escobar
theorem and its related corollaries. The definition of our forcing relation is
similar to the forcing relation in~\cite{soskov2004} but our presentation of
the forcing and the forcing poset are closer to~\cite{montalban2}.

Our forcing poset is the space of presentations of a fixed structure $\A$.
We produce a generic element $\mathcal G\cong \A$ that decides membership of
the Borel sets on $\A$. We represent the Borel sets by their 
$\PropL$ formulas using the correspondence established
in~\cref{prop:borelandN}. Formally our forcing poset $\mathcal P$ is the set of
finite permutations of the natural numbers. We will build a generic enumeration $g$ and let our generic
proposition $\mathcal G=g^{-1}(\A)$. Let us start by defining our forcing
relation.
%\subsection{The forcing relation}
\begin{definition}[Forcing relation]\label{def:forcingrelation}
  Fix a countable structure $\A$ in a countable vocabulary and let $\mathcal P$
  be the set of all finite permutations of $A$,  which by our assumption is
$\omega$. For a forcing condition $p\in
  \mathcal P$ define the forcing relation
  $\forces$ inductively for all $\PropL$-formulas $\varphi$ in normal form as follows.
\begin{enumerate}
  \item[(atom1)] $p \forces \top$ and $p \not\forces \bot$
  \item[(atom2)] $p \forces D(n)$ if $\A \models \phi^{at}_n[x_i \mapsto p_i]$
  %\item $p \forces \varphi\land \psi$ if $p\forces \varphi$ and $p\forces
  %  \psi$.
%\item
%  If $\varphi$ is atomic, then $\varphi$ is one of the constants $\bot,\top$, or
%  one of the propositional variables $D(n)$, for some $n$. Then:
%  \begin{itemize}
%  \item
%    $p \forces \top$ and $p \not\forces \bot$;
%  \item
%    $p \forces D(n)$ if $\A \models \phi^{at}_n[x_i \mapsto p_i]$.
%  \end{itemize}
  \item[($\pSigma^p_0$)] if $\varphi=D(n_0) \land \cdots \land D(n_{k})$,
  then $p \Vdash_\A \varphi$ if $p \Vdash_\A D(n_i)$ for all $i \leq k$
\item[($\pSigma^p_1$)]
  if $\varphi= \vvee_{i\in I} \phi_i$, where $\phi_i$ are $\pSigma^p_0$ formulas, then 
  $p \forces \varphi$ if for some $i \in I$, $p \forces \phi_i$
\item[($\pSigma^p_{>1}$)]
  if $\varphi= \vvee_{i\in I} (\phi_i \land \theta_i)$, where for each $i \in
  I$, $\phi_i\in\pSigma^p_{\beta_i}$ and $\theta_i\in \pPi^p_{\beta_i}$ for some $\beta_i>0$, then $p \forces \varphi$ if
  $p \forces \phi_i$ and $p \forces \theta_i$ for some $i \in I$
\item[($\pPi^p_{\alpha}$)]\label{it:piforcingdef}
  if $\varphi$ is a $\pPi^p_\alpha$ formula, for any $\alpha \geq 0$, then
  $p \forces \varphi$ if for all $q \supseteq p$, $q \not\forces neg(\varphi)$
\end{enumerate}
\end{definition}
There is a subtle point to the definition of (atom2). If $\phi_n^{at}$ contains
a variable symbol $x_i$ with $i\geq |p|$, then we regard
$\phi_n^{at}[x_i\mapsto p_i]$ as false in $\A$ and thus $p\not\forces D(n)$ in
this case.

\begin{lemma}[Extension]\label{lem:extension}
  For any $\PropL$ formula $\varphi$ and forcing condition $p$,
  if $p \forces \varphi$ and $p \subseteq q$, then $q \forces \varphi$.
\end{lemma}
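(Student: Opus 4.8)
The plan is to prove the Extension Lemma by induction on the complexity of the normal-form formula $\varphi$, following the case structure of \cref{def:forcingrelation}. The base cases are immediate: $p\forces\top$ always holds, so $q\forces\top$; and if $p\forces D(n)$ then $\A\models\phi^{at}_n[x_i\mapsto p_i]$, which mentions only variables $x_i$ with $i<|p|$, and since $q\supseteq p$ agrees with $p$ on these indices we get $\A\models\phi^{at}_n[x_i\mapsto q_i]$, hence $q\forces D(n)$. (Here I would note explicitly that $|q|\geq|p|$, so the subtlety about variables $x_i$ with $i\geq|p|$ does not cause trouble: if $p$ already satisfies the atom, all relevant variables are in range for $p$ and a fortiori for $q$.) The $\pSigma^p_0$ case is just a finite conjunction of atom cases, and the $\pSigma^p_1$ and $\pSigma^p_{>1}$ cases are immediate from the induction hypothesis applied to the witnessing disjunct (and, in the $\pSigma^p_{>1}$ case, to both conjuncts $\phi_i$ and $\theta_i$, noting $\theta_i\in\pPi^p_{\beta_i}$ falls under the $\pPi^p_\alpha$ clause).

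The one case requiring a genuine idea is the $\pPi^p_\alpha$ clause, since there the definition quantifies \emph{universally} over extensions, and monotonicity in that clause is not simply inherited from a subformula. Suppose $\varphi$ is $\pPi^p_\alpha$ and $p\forces\varphi$, i.e., for all $r\supseteq p$ we have $r\not\forces neg(\varphi)$. Let $q\supseteq p$; I must show $q\forces\varphi$, i.e., for all $r\supseteq q$, $r\not\forces neg(\varphi)$. But any $r\supseteq q$ also satisfies $r\supseteq p$ by transitivity of $\supseteq$, so $r\not\forces neg(\varphi)$ follows directly from the hypothesis on $p$. Thus the $\pPi^p_\alpha$ case needs \emph{no} appeal to the induction hypothesis at all — it is simply upward-closedness of the set $\{r : r\not\forces neg(\varphi)\}$ under the constraint that we are looking at a smaller cone.

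The main (mild) obstacle is purely bookkeeping: making sure the induction is set up on the right well-founded notion of complexity so that in the $\pSigma^p_{>1}$ case the subformulas $\phi_i\in\pSigma^p_{\beta_i}$ and $\theta_i\in\pPi^p_{\beta_i}$ with $\beta_i<\alpha$ count as strictly simpler, and that $neg(\varphi)$ for a $\pPi^p_\alpha$ formula is a $\pSigma^p_\alpha$ formula of the same rank $\alpha$ (so the $\pPi^p_\alpha$ case is handled without recursion, as noted above). Concretely I would induct on the ordinal rank $\alpha$ with a secondary induction on the finite structure of the formula at level $\alpha$ (or equivalently define a rank function into $\omega_1\times\omega$ ordered lexicographically). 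Once the induction scheme is fixed, every case is a one- or two-line verification, and the lemma follows.
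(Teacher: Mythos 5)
Your proof is correct and matches the paper's intended argument: the paper disposes of this lemma with ``straightforward induction on $\varphi$,'' and your case analysis (including the observation that the $\pPi^p_\alpha$ clause follows from transitivity of $\supseteq$ alone, with no appeal to the induction hypothesis, and the careful handling of the $D(n)$ case when variables exceed $|p|$) is exactly the verification being left to the reader. Nothing is missing.
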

\begin{proof}
  Straightforward induction on $\varphi$.
\end{proof}
% \begin{proof}
%   Let $\varphi$ be a $\pSigma^p_0$ formula, 
%   i.e. $\varphi$ has the form $\varphi \equiv D(n_0) \land \cdots D(n_{k-1})$.
%   Here $p \forces \varphi$ means that $\A \models \phi^{at}_{n_j}[x_i \mapsto p_i]$ for all $j < k$.
%   It follows that $\A \models \phi^{at}_{n_j}[x_i \mapsto q_i]$ for any $q \supseteq p$.
  
%   Let $\varphi$ be a $\pSigma^p_1$ formula, i.e.
%   i.e. $\varphi$ has the form $\varphi \equiv \vvee_{i\in W_e} \phi_i$,
%   where all $\phi_i$ are $\pSigma^p_0$ formulas.
%   Then $p \forces \varphi$ if $p \forces \phi_i$, for some $i \in W_e$.
%   Since $\phi_i$ is $\pSigma^p_0$, by induction hypothesis, it follows that $q \forces \phi_i$ and hence $q \forces \varphi$.

%   Let $\varphi$ be a $\pSigma^p_\alpha$ formula, for some $\alpha \geq 2$.
%   Then $p \forces \varphi$ if $p \forces \phi_i\land \psi_i$ for some $i \in W_e$,
%   where $\phi_i$ is a $\pSigma^p_{\beta_i}$ formula and $\psi_i$ is a $\pPi^p_{\beta_i}$ formula,
%   where $\beta_i < \alpha$.
%   By induction hypothesis, $q \forces \phi_i \land \psi_i$ and hence $q \forces \varphi$.

%   Let $\varphi$ be a $\pPi^p_\alpha$ formula for any $\alpha$.
%   Then $p \forces \varphi$ if for all $r \supseteq p$, $r \not\forces neg(\varphi)$.
%   It follows that for any $q \supseteq p$, for all $r \supseteq q$, $r \not\forces neg(\varphi)$
%   and  hence $q \forces \varphi$.
% \end{proof}

\begin{lemma}[Consistency]\label{lem:consistency}
  For any $\PropL$ formula $\varphi$ and any forcing condition $p$,
  it is not the case that $p \forces \varphi$ and $p \forces neg(\varphi)$.
\end{lemma}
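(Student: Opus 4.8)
The plan is to prove the Consistency Lemma by transfinite induction on the structure of the normal-form formula $\varphi$, i.e., simultaneously on $\alpha$ over all $\pSigma^p_\alpha$ and $\pPi^p_\alpha$ formulas, using the Extension Lemma (\cref{lem:extension}) freely. The key observation organizing the proof is that $neg$ interchanges $\pSigma^p_\alpha$ and $\pPi^p_\alpha$ and is an involution (the proposition preceding \cref{def:forcingrelation}), so the two cases ``$\varphi$ is $\pSigma^p_\alpha$'' and ``$\varphi$ is $\pPi^p_\alpha$'' are mirror images and one argument handles both directions.

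\textbf{Base cases.} For $\varphi=\top$ we have $neg(\top)=\bot$ and $p\not\forces\bot$ by (atom1), so there is nothing to check; dually for $\bot$. For an atomic $D(n)$: $neg(D(n))$ is $\neg\phi^{at}_n$, and by clause $(\pPi^p_\alpha)$ applied at level $0$, $p\forces\neg\phi^{at}_n$ would mean no $q\supseteq p$ forces $neg(\neg\phi^{at}_n)=\phi^{at}_n=D(n)$; but $p\supseteq p$ and $p\forces D(n)$, a contradiction. So $p$ cannot force both $D(n)$ and $neg(D(n))$. The $\pSigma^p_0$ case (a finite conjunction of atoms) reduces to the atomic case: if $p$ forces the conjunction it forces each $D(n_i)$, while forcing $neg(\varphi)=\bigvee\neg\phi^{at}_{n_i}$ means, by the $\pPi^p_0$ clause, that no extension forces $\bigwedge D(n_i)$ — but $p$ itself does.

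\textbf{Inductive step, $\pSigma$-side.} Suppose $\varphi$ is $\pSigma^p_1$ (resp.\ $\pSigma^p_{>1}$) and, for contradiction, $p\forces\varphi$ and $p\forces neg(\varphi)$. Since $neg(\varphi)$ is $\pPi^p_\beta$ for the appropriate $\beta$, clause $(\pPi^p_\alpha)$ gives that no $q\supseteq p$ satisfies $q\forces neg(neg(\varphi))=\varphi$. Taking $q=p$ yields $p\not\forces\varphi$, contradicting the assumption — this direction does not even need the induction hypothesis. For the genuinely inductive direction: suppose $\varphi$ is $\pPi^p_\alpha$, $p\forces\varphi$ and $p\forces neg(\varphi)$, where $neg(\varphi)=\vvee_{i\in I}\exists\bar x_i(neg(\phi_i)\land neg(\psi_i))$ (or the $\pSigma^p_1$ shape at level $1$) is $\pSigma^p_\alpha$. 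By the $\pSigma$-forcing clause there is $i\in I$ with $p\forces neg(\phi_i)$ and $p\forces neg(\psi_i)$. On the other hand, $p\forces\varphi$ means by clause $(\pPi^p_\alpha)$ that no extension of $p$ — in particular not $p$ — forces $neg(\varphi)$; but we have just exhibited $p\forces neg(\varphi)$ via the witness $i$. This is the contradiction. (Equivalently, one can peel one more layer: $p\forces\phi_i\lor\psi_i$ as part of $\varphi$ is not literally a clause, so the clean route really is to use that $p\forces\varphi$ forbids $p\forces neg(\varphi)$ directly, making the $\pPi$-forcing clause do all the work.)

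\textbf{Main obstacle.} The only real subtlety is that the forcing relation for $\pSigma^p_0$, $\pSigma^p_1$, and $\pSigma^p_{>1}$ is defined positively (explicit witnesses), whereas for every $\pPi$-formula it is defined negatively via ``no extension forces the negation.'' So Consistency for a pair $(\varphi, neg(\varphi))$ where $\varphi\in\pPi^p_\alpha$ is almost immediate from clause $(\pPi^p_\alpha)$ with $q=p$ together with $neg(neg(\varphi))=\varphi$; the content is really in the case $\varphi\in\pSigma^p_\alpha$, where forcing $neg(\varphi)\in\pPi^p_\alpha$ unwinds (again by clause $(\pPi^p_\alpha)$) to ``no $q\supseteq p$ forces $\varphi$,'' and then $q=p$ finishes it. Thus, perhaps surprisingly, the induction hypothesis is barely used: the Extension Lemma plus the involutivity of $neg$ and the uniform shape of clause $(\pPi^p_\alpha)$ essentially close the argument. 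I would nonetheless phrase it as a transfinite induction to handle the non-normal-form case, where by \cref{def:posinfnegation} we replace $\varphi$ by an equivalent normal-form $\psi$, define $neg(\varphi)=neg(\psi)$, note the forcing relation was only defined for normal-form formulas, and apply the already-established case to $\psi$.
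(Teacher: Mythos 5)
Your proposal is correct and, at its core, is the same argument as the paper's: unwinding the $\pPi^p_\alpha$ forcing clause (applied to whichever of $\varphi$, $neg(\varphi)$ is the $\pPi$-formula) with $q=p$ and invoking $neg(neg(\varphi))=\varphi$. The paper simply omits the induction scaffolding, handling both sides at once by a without-loss-of-generality symmetry --- which, as you yourself note, is all that is actually needed since the induction hypothesis is never used.
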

\begin{proof}
  Towards a contradiction, assume that for some $\PropL$ formula $\varphi$ and some forcing condition $p$,
  $p \forces \varphi$ and $p \forces neg(\varphi)$.
  Without loss of generality, suppose that $\varphi$ is a $\pSigma^p_\alpha$ formula, for some $\alpha$.
  Then, clearly, $neg(\varphi)$ is a $\pPi^p_\alpha$ formula.
  It follows that, since $p \forces neg(\varphi)$, for all $q \supseteq p$,  $q \not\forces neg(neg(\varphi))$
  and hence $p \not\forces \varphi$ because $neg(neg(\varphi)) \equiv \varphi$.
\end{proof}

\begin{lemma}[Density]\label{lem:density}
  For any $\PropL$ formula $\varphi$ and forcing condition $p$,
  there is a forcing condition $q \supseteq p$ such that
  $q \forces \varphi$ or $q \forces neg(\varphi)$.
\end{lemma}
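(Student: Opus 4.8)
The plan is to prove this by induction on the complexity of $\varphi$, where without loss of generality $\varphi$ is in normal form, say $\varphi\in\pSigma^p_\alpha$ (the $\pPi^p_\alpha$ case is symmetric since $neg(neg(\varphi))\equiv\varphi$ and the clause ($\pPi^p_\alpha$) is defined via $neg$). The base cases are the atomic formulas: for $\varphi=D(n)$, any $p$ with $|p|$ large enough to interpret all variables of $\phi^{at}_n$ either already forces $D(n)$ or, by extending $p$ to such a $q$, we either get $q\forces D(n)$ or $\A\not\models\phi^{at}_n[x_i\mapsto q_i]$; in the latter case I need to check $q\forces neg(D(n))$, i.e. $q\forces \neg D(n)$ as a $\pPi^p_0$ formula, which unwinds (via clause ($\pPi^p_\alpha$)) to: no $r\supseteq q$ forces $D(n)$. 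Since whether $\A\models\phi^{at}_n[x_i\mapsto r_i]$ depends only on the values $r_i$ for $x_i$ occurring in $\phi^{at}_n$, and these are already fixed by $q$, no extension can force $D(n)$ — so we are fine. The formulas $\top,\bot$ are trivial.

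For the inductive step, the key observation is the clause ($\pPi^p_\alpha$): a $\pPi^p_\alpha$ formula $\psi$ is forced by $p$ exactly when no extension of $p$ forces $neg(\psi)\in\pSigma^p_\alpha$. So for $\varphi\in\pSigma^p_\alpha$ I argue: either some $q\supseteq p$ has $q\forces\varphi$, in which case we are done; or no extension of $p$ forces $\varphi$, and then by definition $p\forces neg(\varphi)$ directly, so we are done with $q=p$. This dichotomy is what makes the lemma almost immediate for $\pSigma$ formulas — the real content is just that the two cases of the dichotomy are exhaustive, which is trivial, plus making sure the base cases behave. In other words, the only place an actual argument is needed is the atomic level; at all higher levels the definition of forcing for $\pPi$ formulas is rigged precisely so that density holds by a trivial case split.

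The step I expect to be the main obstacle — or at least the one requiring the most care — is the atomic base case and the subtlety flagged after \cref{def:forcingrelation} about variables $x_i$ with $i\ge|p|$. I need to make sure that when I extend $p$ to decide $D(n)$, I extend it enough so that all variable indices appearing in $\phi^{at}_n$ are $<|q|$; only then is the truth value of $D(n)$ genuinely decided and stable under further extension. Concretely, if $\phi^{at}_n$ involves $x_0,\dots,x_m$, take any $q\supseteq p$ with $|q|>m$; then $\A\models\phi^{at}_n[x_i\mapsto q_i]$ or not, giving $q\forces D(n)$ or (by the argument above) $q\forces\neg D(n)$. Everything else is a routine appeal to the definitions, so the write-up will be short: state "induction on $\varphi$ in normal form", dispatch the atomic cases with the stability remark, and then for composite $\pSigma^p_\alpha$ formulas invoke the case split "$\exists q\supseteq p\ (q\forces\varphi)$ or not", citing clause ($\pPi^p_\alpha$) for the negative case.
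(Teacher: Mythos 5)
Your core argument is exactly the paper's proof: for normal-form $\varphi$ (WLOG $\pSigma^p_\alpha$), either some $q\supseteq p$ forces $\varphi$, or else no extension of $p$ does, and then clause ($\pPi^p_\alpha$) together with $neg(neg(\varphi))=\varphi$ gives $p\forces neg(\varphi)$ outright, so the proposal is correct and essentially identical to the paper's one-paragraph proof. The only difference is cosmetic: the induction scaffolding and the careful atomic base case are superfluous, since the same case split already handles $D(n)$ ($neg(D(n))$ is $\pPi^p_0$, so its forcing clause is precisely this dichotomy), but including them does no harm.
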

\begin{proof}
  Assume that there is a $\PropL$ formula $\varphi$ and forcing condition $p$
  such that for all $q \supseteq p$, $q \not \forces \varphi$ and $q \not\forces neg(\varphi)$.
  Without loss of generality, suppose that $\varphi$ is a $\pSigma^p_\alpha$ formula.
  Hence, $neg(\varphi)$ is a $\pPi^p_\alpha$ formula.
  Since $p\not\forces neg(\varphi)$, it follows that for some $q \supseteq p$, $q \forces neg(neg(\varphi))$.
  But since $neg(neg(\varphi)) \equiv \varphi$, we conclude that $q \forces \varphi$. 
\end{proof}
\begin{definition}
  A bijection $g$ is called a \emph{generic} enumeration of $\A$ if for any
  $\PropLc$ formula $\varphi$, there is $p \subset g$
  such that $p \Vdash_\A \varphi$ or $p \Vdash_\A neg(\varphi)$.
\end{definition}
\begin{lemma}\label{lem:bijection}
  Every $p\in \mathcal P$ can be extended to a generic enumeration $g$.  
\end{lemma}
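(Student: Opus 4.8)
The statement is a standard density/genericity construction, so the plan is to build $g$ as the union of an $\omega$-chain of conditions $p = p_0 \subseteq p_1 \subseteq p_2 \subseteq \cdots$ meeting countably many requirements: (a) for each $\PropLc$ formula $\varphi$, some initial segment decides $\varphi$; and (b) $g$ is a bijection of $\omega$ onto $\omega$ (i.e.\ it is a genuine enumeration, not just a partial injection). Since there are only countably many $\PropLc$ formulas, we can fix an enumeration $(\varphi_k)_{k\in\omega}$ of them and interleave the two kinds of requirements along $\omega$.

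First I would set $p_0 = p$. At stage $2k+1$, given $p_{2k}$, I invoke \cref{lem:density} applied to the formula $\varphi_k$ and the condition $p_{2k}$ to get $q \supseteq p_{2k}$ with $q \forces \varphi_k$ or $q \forces neg(\varphi_k)$, and set $p_{2k+1} = q$. At stage $2k+2$, given $p_{2k+1}$, I perform a back-and-forth step to ensure totality and surjectivity: let $n$ be the least natural number not in $\mathrm{dom}(p_{2k+1})$ and let $m$ be the least natural number not in $\mathrm{ran}(p_{2k+1})$; since $p_{2k+1}$ is a finite partial permutation, I can extend it to a finite partial permutation $p_{2k+2}$ with $n \in \mathrm{dom}(p_{2k+2})$ and $m \in \mathrm{ran}(p_{2k+2})$ (e.g.\ first set $p_{2k+1}(n) = m$ if $m \notin \mathrm{ran}(p_{2k+1})$ is still compatible, otherwise route through a fresh value; the elementary bookkeeping here is routine). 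Then set $g = \bigcup_k p_k$.

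It remains to check that $g$ has the claimed properties. It is a bijection: every $n$ enters the domain by some even stage and every $m$ enters the range by some even stage, by the back-and-forth steps, and each $p_k$ is injective so $g$ is injective; hence $g\colon \omega \to \omega$ is a bijection. It is generic: given a $\PropLc$ formula $\varphi$, it is $\varphi_k$ for some $k$, and by construction $p_{2k+1} \subseteq g$ with $p_{2k+1} \forces \varphi$ or $p_{2k+1} \forces neg(\varphi)$, which is exactly the defining condition. Finally, since we started with $p_0 = p$ we have $p \subseteq g$.

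**Main obstacle.** There is no deep obstacle — the only point requiring a little care is the back-and-forth bookkeeping at the even stages, making sure that extending a finite partial permutation to cover one more domain point and one more range point can always be done while remaining a partial permutation (this is immediate: a finite partial injection $\omega \rightharpoonup \omega$ always extends by adding a new pair, and since $\omega$ is infinite one can always add a point to the domain and a point to the range, possibly in two substeps). The genericity requirements are handled for free by \cref{lem:density}, and consistency of the decisions across stages is guaranteed by \cref{lem:extension} (an earlier decision is preserved under the later larger conditions) together with \cref{lem:consistency} (no condition forces both $\varphi$ and $neg(\varphi)$). So the proof is essentially an interleaving argument, and I would write it as such, citing \cref{lem:density}, \cref{lem:extension}, and \cref{lem:consistency}.
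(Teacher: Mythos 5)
Your proof is correct and follows essentially the same route as the paper: fix a countable enumeration of the $\PropLc$ formulas, use \cref{lem:density} to meet each decision requirement, interleave steps guaranteeing that the limit is a bijection, and take the union of the resulting chain. The only difference is cosmetic bookkeeping (you handle domain and range explicitly as a back-and-forth, while the paper's conditions are finite injective strings whose domains are automatically initial segments, so only the range needs attention), which does not affect correctness.
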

\begin{proof}
  As the set of $\PropLc$ formulas in normal form is countable, we can fix an
  enumeration $(\phi_n)_{n\in\omega}$ of these formulas and let $F_n=\{ q\in
  \mathcal P: q\forces \phi_n \lor q \forces \neg(\phi_n)\}$. By \cref{lem:density}
  every $F_n$ is dense in $\mathcal P$. 
  Define a sequence $p=p_0\subseteq p_1\subseteq \dots$ such that $p_{2n}\in F_n$
  and $p_{2n+1}$ contains the least natural number not in $p_{2n}$. As all the
  $F_n$ are dense, this sequence is well-defined and $g=\lim_{n\to \omega}p_n$
  is generic.
\end{proof}
%\begin{proof}
%  Clearly the set $\{ q\in \mathcal P: n\in q\}$ is a dense subset of $\mathcal
%  P$ for every $n$ and thus met by $g$. Hence, $g$ is a bijection.
%\end{proof}
%The following lemma follows directly from \cref{lem:density}.
%\begin{lemma}[Forcing implies truth]\label{lem:forcingimpltruth}
%  Let $g$ be a generic enumeration of $\A$.
%  For any $\PropL$ formula $\varphi$,
%  there is a finite part $p \subset g$ such that either $p \Vdash_\A \varphi$ or $p \Vdash_\A neg(\varphi)$.
%\end{lemma}

\begin{theorem}[Forcing equals truth]  \label{th:forcing-equals-truth}
  Let $g$ be a generic enumeration of $\A$ and let $\G = g^{-1}(\A)$ and $D_{\G}$ be the atomic diagram of $\G$.
  Then for any $\PropLc$ formula $\varphi$,
  \[(\Nat,D_{\G}) \models \varphi \text{ iff } (\exists p \subset g)[\ p \Vdash_\A \varphi\ ].\]
\end{theorem}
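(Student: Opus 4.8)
The plan is to prove this by induction on the complexity of the $\PropLc$ formula $\varphi$ in normal form, following the standard ``forcing equals truth'' template but being careful about the way the $\Pi$-case of the forcing relation is defined via $neg$.

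First I would set up the induction. Since every $\PropLc$ formula is equivalent to one in normal form (i.e. $\pSigma^p_\alpha$ or $\pPi^p_\alpha$ for some $\alpha<\omega_1^{\mathrm{CK}}$), and since both sides of the biconditional respect logical equivalence, it suffices to treat formulas in normal form. The base cases are the atomic formulas $\top$, $\bot$, and $D(\mathbf n)$: here $(\Nat,D_\G)\models D(\mathbf n)$ iff $\G\models \phi^{at}_n$ iff, writing $p_i = g(i)$ for a sufficiently long initial segment $p\subset g$, $\A\models \phi^{at}_n[x_i\mapsto p_i]$, which is exactly $p\forces D(n)$; the subtlety about variables $x_i$ with $i\geq|p|$ is handled by taking $p$ long enough, which is possible since $g$ is a bijection defined on all of $\omega$. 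The $\pSigma^p_0$ case is an immediate conjunction over finitely many atoms. For $\pSigma^p_1$ and $\pSigma^p_{>1}$ the forward direction uses genericity together with the induction hypothesis, and the backward direction uses \cref{lem:extension} (Extension) to push a forcing condition up along $g$; for $\pSigma^p_{>1}$, $\varphi = \vvee_{i}(\phi_i\wedge\theta_i)$, one notes $(\Nat,D_\G)\models\varphi$ iff some disjunct holds iff (by IH applied to $\phi_i\in\pSigma^p_{\beta_i}$ and $\theta_i\in\pPi^p_{\beta_i}$) there are $p,q\subset g$ with $p\forces\phi_i$ and $q\forces\theta_i$, and then Extension lets us take a common $r\supseteq p,q$ with $r\subset g$.

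The main work, and the expected obstacle, is the $\pPi^p_\alpha$ case, because the forcing relation for $\pPi$ formulas is defined negatively: $p\forces\varphi$ iff no $q\supseteq p$ has $q\forces neg(\varphi)$. So I would argue as follows. Suppose $\varphi\in\pPi^p_\alpha$; then $neg(\varphi)\in\pSigma^p_\alpha$ and $neg(neg(\varphi))\equiv\varphi$ by the \textbf{Proposition} on double negation. For the forward direction, assume $(\Nat,D_\G)\models\varphi$, i.e.\ $(\Nat,D_\G)\not\models neg(\varphi)$; I must produce $p\subset g$ with $p\forces\varphi$. By \cref{lem:density}, there is $p\subset g$ with $p\forces\varphi$ or $p\forces neg(\varphi)$ — here I use that $g$ is generic, so for the particular formula $\varphi$ (equivalently for $neg(\varphi)$) a deciding condition appears along $g$. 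If $p\forces neg(\varphi)$, then since $neg(\varphi)$ is $\pSigma^p_\alpha$ the induction hypothesis (applied at level $\alpha$ to the $\Sigma$-formula $neg(\varphi)$, whose proper subformulas have lower complexity — so strictly this is an induction where the $\Sigma_\alpha$ case is handled before the $\Pi_\alpha$ case, or simultaneously using that the $\Sigma_\alpha$ truth lemma only invokes the IH at levels $\beta_i<\alpha$) gives $(\Nat,D_\G)\models neg(\varphi)$, a contradiction. Hence $p\forces\varphi$. For the backward direction, assume $p\forces\varphi$ for some $p\subset g$ but $(\Nat,D_\G)\not\models\varphi$, i.e.\ $(\Nat,D_\G)\models neg(\varphi)$; since $neg(\varphi)$ is $\pSigma^p_\alpha$, the $\Sigma$-case of the theorem (already available, as it only uses the IH below $\alpha$) yields $q\subset g$ with $q\forces neg(\varphi)$. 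Taking a common extension $r\subseteq g$ of $p$ and $q$ (Extension lemma), we get $r\forces\varphi$ and $r\forces neg(\varphi)$, contradicting \cref{lem:consistency} (Consistency). So $(\Nat,D_\G)\models\varphi$.

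To make the induction structure rigorous I would phrase it as: prove by induction on $\alpha$ the conjunction of ``truth lemma for $\pSigma^p_\alpha$ formulas'' and ``truth lemma for $\pPi^p_\alpha$ formulas'', where within stage $\alpha$ the $\Sigma$-statement is proved first (using only the IH for $\beta<\alpha$, via the definitions of $\pSigma^p_1$ and $\pSigma^p_{>1}$ forcing, plus Extension and genericity), and then the $\Pi$-statement is proved using the already-established $\Sigma$-statement at stage $\alpha$ together with Consistency and Density as above. The only genuinely delicate points are: (i) ensuring in the atomic case that the chosen $p\subset g$ is long enough to interpret all variables occurring in $\phi^{at}_n$, which is fine because $g$ is total; and (ii) the bookkeeping that the $\pPi^p_\alpha$ argument never needs the truth lemma for $\pPi^p_\alpha$ itself, only for $\pSigma^p_\alpha$ and for formulas of rank $<\alpha$ — this is exactly what the asymmetric order of the two sub-inductions guarantees.
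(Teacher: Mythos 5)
Your proposal is correct and follows essentially the same route as the paper's proof: transfinite induction on normal-form formulas, with the $\pPi^p_\alpha$ case handled via $neg(\varphi)$, genericity of $g$, comparability of initial segments plus Extension/Consistency, and the same implicit ordering in which the $\Sigma$-statement at level $\alpha$ is available when treating the $\Pi$-statement at level $\alpha$. The only cosmetic slip is the remark that the truth-to-forcing direction for $\Sigma$-formulas ``uses genericity''; as your own detailed argument shows, it needs only the induction hypothesis and a common extension of initial segments of $g$.
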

\begin{proof}
  We prove this by transfinite induction on the $\PropLc$ formula $\varphi$.
  Let $\varphi$ be atomic. Then $\varphi=\top$, $\varphi=\bot$ or $\varphi
  = D(\mathbf n)$ for some $n$. The first two cases follow trivially from the
  definition. 
  For the third case, suppose that for some $p \subset g$, $p \forces D(\mathbf n)$.
  Then $p^{-1}(\A) \models \phi^{at}_n[x_i \mapsto i]$ and, thus,
  $g^{-1}(\A) \models \phi^{at}_n[x_i \mapsto i]$, or, in other words,
  $(\Nat, D_{\G}) \models D(\mathbf n)$.
  For the converse, let $(\Nat, D_{\G}) \models D(\mathbf n)$.
  By our convention that $\phi_n^{at}$ does not contain variable symbols $x_i$
  for $i>n$ we have that for $p = g\upharpoonright{n}$,
  $p^{-1}(\A) \models \phi^{at}_n[x_i \mapsto i]$ and hence $p \forces D(n)$.
  The equivalence for $\pSigma^{pc}_0$ and $\pSigma^{pc}_1$ formulas follows easily.

  Let $\varphi$ be equivalent to a $\pSigma^{pc}_\alpha$ formula, for $\alpha \geq 2$.
  Then $\varphi \equiv \vvee_{i\in W_e}(\phi_i \land \psi_i)$, where for any $i \in W_e$,
  $\phi_i$ is $\pSigma^{pc}_{\beta_i}$ and $\psi_i$ is $\pPi^{pc}_{\beta_i}$ for some $\beta_i < \alpha$.
  Suppose that for some $p \subset g$, $p \forces \varphi$, i.e., for some $i \in W_e$,
  $p \forces \phi_i$ and $p \forces \psi_i$.
  By the induction hypothesis, $(\Nat, D_{\G}) \models \phi_i$ and $(\Nat,
  D_{\G})\models \psi_i$ and hence
  $(\Nat, D_{\G}) \models \varphi$.
  For the converse, let $(\Nat, D_{\G}) \models \varphi$.
  Then $(\Nat, D_{\G}) \models \phi_i$ and $(\Nat, D_{\G}) \models \psi_i$ for some $i \in W_e$.
  By the induction hypothesis, there is some $p_0 \subset g$ such that $p_0 \forces \phi_i$
  and some $p_1 \subset g$ such that $p_1 \forces \psi_i$.
  Let $p = \max\{p_0, p_1\}$. Clearly, $p \subset g$.
  By monotonicity of forcing, $p \forces \phi_i$  and $p \forces \psi_i$.
  It follows that $p \forces \varphi$.

  Now let us consider the case when $\varphi$ is equivalent to
  a $\pPi^{pc}_\alpha$ formula, for any $\alpha \geq 0$.
  First, let $p \forces \varphi$ for some $p \subset g$.
  Towards a contradiction, assume that $(\Nat, D_{\G}) \not\models \varphi$.
  But then $(\Nat, D_{\G}) \models neg(\varphi)$, where $neg(\varphi)$ is $\pSigma^{pc}_\alpha$.
  It follows that for some $q \subset g$, $q \forces neg(\varphi)$.
  Let $r = \max\{p ,q\}$.
  By monotonicity, it follows that $r \forces \varphi$ and $r \forces neg(\varphi)$.
  A contradiction.

  Second, let $(\Nat, D_{\G}) \models \varphi$.
  Assume that for all $p \subset g$, $p \not\forces \varphi$.
  But then, since $g$ is generic, there is some $q \subset g$ such that
  $q \forces neg(\varphi)$.
  Since $neg(\varphi)$ is $\pSigma^{pc}_\alpha$, it follows that $(\Nat, D_{\G}) \models neg(\varphi)$. We reach a contradiction.
\end{proof}

\subsection*{Definability of forcing}

For any $\PropL$ formula $\varphi$, we will define a $L^p_{\omega_1\omega}$ formula $Force_\varphi(\bar{u})$ such that
\[p \Vdash_\A \varphi \text{ iff } \A \models Force_\varphi(\bar{p}). \]
\begin{definition} For any $\PropL$ formula $\varphi$ in normal form and every tuple
  $\bar u\in \omega^{<\omega}$, define the formula
  $Force_\varphi(\bar u)$ inductively as follows.
  \begin{enumerate}
    \item[(atom1)] $Force_\top(\bar u)=\exists x\; x=x$ and $Force_\bot(\bar u)=\exists
    x\exists y (x\neq y\land x=y)$
  \item[(atom2)] if $\varphi=D(n)$, then $Force_{\varphi}(\bar{u}) = \bigwedge_{i\neq j\
    \&\ i,j<n}u_i \neq u_j\ \&\ \phi^{at}_n[x_i \mapsto u_i]$
  \item[($\pSigma^p_0$)] if $\varphi = \wwedge_{i<k} \phi_i$ with $\phi_i$ atomic, then
$Force_\varphi(\bar{u})= \wwedge_{i<k} Force_{\phi_i}(\bar{u})$
\item[($\pSigma^p_1$)] if $\varphi = \vvee_{i\in I} \phi_i$ where $\phi_i\in
  \pSigma^p_0$, then $Force_{\varphi}(\bar{u}) = \vvee_{i\in
  I}Force_{\phi_i}(\ov{u})$
\item[($\pSigma^p_{>1}$)] if $\varphi= \vvee_{i\in I} (\phi_i \land \theta_i)$
  where $\phi_i\in \pSigma_{\beta_i}^p$ and $\theta_i\in \pPi^p_{\beta_i}$, $\beta_i>0$, then 
  \[Force_{\varphi}(\bar{u})= \vvee_{i\in I}(Force_{\phi_i}(\bar{u})
  \land Force_{\theta_i}(\bar{u}))\]
\item[($\pPi^p_{\alpha}$)] $Force_{\varphi}(\ov{u})=\wwedge_{k\in\omega}\forall
  x_1 \cdots \forall x_k (neg(Force_{neg(\varphi)}(\ov{u},\ov{x})))$
\end{enumerate}
\end{definition}
%%\todo{d: this is related to the comment on the forcing relation definition. It
%  seems that existential quantifiers do not occur in the definition of
%  $\Sigma^p_1$ formulas $Force_{\varphi}$. Is it really the case that every
%  $\Sigma^p_1$ formula is equivalent to a quantifier-free formula over any
%vocabulary $\tau$? Because this is what one would get if all works out.}
Notice that the definition of $Force_\varphi(\bar u)$ only depends on the formula
$\varphi$ and the size of the tuple $\bar u$. It depends neither on the tuple
$\bar u$ itself nor on the structure $\A$. 
\begin{lemma}\label{lem:defforcingcomplexity}
  For all $\alpha \geq 1$, for each $\pSigma^{pc}_\alpha$ formula $\varphi$,
  $Force_\varphi$ is a $\Sigma^{pc}_\alpha$ $\tau$-formula.
\end{lemma}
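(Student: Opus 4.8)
The plan is to prove the statement by transfinite induction on $\alpha$, following the recursive clauses in the definition of $Force_\varphi$ and matching them against the clauses in \cref{def:compposinf} defining $\cS_\alpha$ and $\cP_\alpha$. Because the complexity of $Force_\varphi$ for a $\pSigma^p_\alpha$ formula is stated in terms of a $\pPi^p$ subcase (via the $neg$ operation), I expect that the right statement to carry through the induction is the conjunction of two claims: (i) if $\varphi\in\cS_\alpha$ then $Force_\varphi\in\cS_\alpha^\tau$, and (ii) if $\varphi\in\cP_\alpha$ then $Force_\varphi\in\cP_\alpha^\tau$. These two feed into each other through the $(\pPi^p_\alpha)$ clause, so they must be proved simultaneously.

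First I would handle the base cases. For $\alpha=1$: if $\varphi$ is $\pSigma^p_0$, i.e.\ a finite conjunction of atoms $D(n_i)$, then each $Force_{D(n_i)}(\bar u)$ is a finite conjunction of formulas $u_i\neq u_j$ and $\phi^{at}_{n_i}[x_i\mapsto u_i]$, hence a $\Sigma^p_0$ $\tau$-formula, and a finite conjunction of these is again $\Sigma^p_0$, in particular $\Sigma^p_1$; effectivity is clear since the construction is uniform. If $\varphi=\vvee_{i\in I}\phi_i\in\cS_1$ with $\phi_i\in\pSigma^p_0$ and $I$ c.e., then $Force_\varphi=\vvee_{i\in I}Force_{\phi_i}(\bar u)$ is a c.e.\ disjunction of existential-free $\Sigma^p_0$ $\tau$-formulas, which is a $\cS_1$ $\tau$-formula by definition (the existential quantifier block being empty). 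For the $\pPi^p_1$ case of claim (ii) I use that $Force_\varphi(\bar u)=\wwedge_{k}\forall\bar x\,(neg(Force_{neg(\varphi)}(\bar u,\bar x)))$; since $neg(\varphi)$ is $\cS_1$, by claim (i) $Force_{neg(\varphi)}$ is a $\cS_1$ $\tau$-formula, and $neg$ of a $\cS_1$ $\tau$-formula is a $\cP_1$ $\tau$-formula by the proposition about $neg$, and then the $\wwedge_k\forall\bar x$ in front keeps us in $\cP_1$. I would also verify the claimed index bookkeeping so that ``$\cP_1$'' is effective and not merely $\Pi^p_1$.

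For the successor of the induction, let $\alpha\geq 2$ and $\varphi\in\cS_\alpha$, so $\varphi\equiv\vvee_{i\in W_e}(\phi_i\land\theta_i)$ with $\phi_i\in\cS_{\beta_i}$, $\theta_i\in\cP_{\beta_i}$, $\beta_i<\alpha$. Then $Force_\varphi(\bar u)=\vvee_{i\in W_e}(Force_{\phi_i}(\bar u)\land Force_{\theta_i}(\bar u))$. By the induction hypotheses (i) and (ii), $Force_{\phi_i}$ is $\cS_{\beta_i}^\tau$ and $Force_{\theta_i}$ is $\cP_{\beta_i}^\tau$; a disjunction over the c.e.\ set $W_e$ of conjunctions of a $\cS_{\beta_i}^\tau$ with a $\cP_{\beta_i}^\tau$ formula (with an empty existential block) is exactly the shape of a $\cS_\alpha^\tau$ formula, so $Force_\varphi\in\cS_\alpha^\tau$, giving (i). For (ii) with $\varphi\in\cP_\alpha$: again $Force_\varphi(\bar u)=\wwedge_k\forall\bar x\,neg(Force_{neg(\varphi)}(\bar u,\bar x))$, $neg(\varphi)\in\cS_\alpha$, so by (i) $Force_{neg(\varphi)}\in\cS_\alpha^\tau$, hence $neg$ of it is in $\cP_\alpha^\tau$, and prepending $\wwedge_k\forall\bar x$ leaves it in $\cP_\alpha^\tau$.

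The main obstacle, I expect, is not the logical skeleton of the induction but the effectivity/uniformity bookkeeping: one must check that the translation $\varphi\mapsto Force_\varphi$ is effective on indices — given a $\cS_a^\Sigma$ or $\cP_a^\Pi$ index for $\varphi$ (in the sense of \cref{def:compposinf}) one can compute an index of the same ordinal notation type for $Force_\varphi$ — so that the phrase ``$\cS_\alpha$ $\tau$-formula'' is honest rather than merely ``$\Sigma^p_\alpha$''. This requires tracking how the variable-tuple $\bar u$ is threaded through the recursion (the third-component convention in \cref{def:compposinf}), and verifying that the $neg$ operation, the $\wwedge_k\forall\bar x$ padding, and the reindexing of $W_e$ are all computable. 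A secondary subtlety is making sure the base formulas $u_i\neq u_j$ and $\phi^{at}_n[x_i\mapsto u_i]$ are genuinely $\Sigma^p_0$: this is exactly why $\neq$ was put into the vocabulary, and I would remark on that. Once the uniformity is set up, each inductive clause is a direct syntactic match, and the lemma follows; I would also note in passing that the same argument gives the dual statement for $\cP_\alpha$, which is what the simultaneous induction actually proves.
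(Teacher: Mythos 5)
Your proposal is correct and follows essentially the same route as the paper: a transfinite induction that tracks the $\Sigma$ and $\Pi$ cases simultaneously (the $\pPi^p$ case handled through $neg$ and the $\wwedge_k\forall\bar x$ prefix), uses the presence of $\neq$ in $\tau$ to keep the atomic case positive, and settles effectivity by observing that the translation $\varphi\mapsto Force_\varphi$ is uniformly computable on indices. The only difference is presentational: you make the simultaneous induction hypothesis and the index bookkeeping explicit, which the paper leaves implicit in its closing remark.
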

\begin{proof}
  This is proven by transfinite induction on $\alpha$. 
  \begin{itemize}
  \item
    Let $\varphi$ be a $\pSigma^p_0$ formula.
    Since $\neq$ is included in $\tau$, the formula $Force_\varphi$ is a finite
    conjunction of finite $\Sigma^p_1$ formulas and hence it is a $\Sigma^{p}_1$ formula.
  \item
    Let $\varphi$ be a $\pSigma^p_1$ formula. Then $\varphi= \vvee_{i\in I} \phi_i$,
    where all $\phi_i\in\pSigma^p_0$.
    We know that $Force_{\phi_i}(\ov{u})$ is $\Sigma^p_0$ for any tuple $\ov{u}$. Then
    \[Force_{\varphi}(\bar{u}) = \vvee_{i\in I}Force_{\phi_i}(\ov{u})\]
    is a $\Sigma^p_1$ $\tau$-formula.
  \item
    Let $\varphi$ be a $\pSigma^p_\alpha$ formula, $\alpha \geq 2$. Then
    $\varphi= \vvee_{i\in I} (\phi_i \land \theta_i)$, where
    $\phi_i\in\pSigma^p_\beta$ and $\theta_i\in\pPi^p_\beta$, for some $\beta < \alpha$.
    Without loss of generality, we may suppose that $\beta \geq 1$.
    By the induction hypothesis, we know that $Force_{\phi_i}(\ov{u})$ is $\Sigma^p_\beta$ and $Force_{\theta_i}(\ov{u})$ is $\Pi^p_\beta$.
    \[Force_{\varphi}(\bar{u}) = \vvee_{i\in I}(Force_{\phi_i}(\ov{u}) \land Force_{\theta_i}(\ov{u})).\]
    is a computable infnitary $\Sigma^p_\alpha$ $\tau$-formula.
  \item
    Let $\varphi$ be a $\pPi^p_\alpha$ formula, for any $\alpha \geq 0$.
    Then clearly, $neg(\varphi)$ is a $\pSigma^p_\alpha$ formula.
    It follows that
    $Force_{neg(\varphi)}(\ov{u})$ is a $\Sigma^p_\alpha$ $\tau$-formula for any tuple $\ov{u}$.
    Then $neg(Force_{neg(\phi_i)}(\ov{u}))$ is $\Pi^p_\alpha$ and hence
    \[Force_{\varphi}(\bar{u}) =\wwedge_{k\in\omega}\forall x_1 \cdots \forall
    x_k\; neg(Force_{neg(\varphi)}(\ov{u},\ov{x}))\]
    is $\Pi^p_\alpha$ formula if $\alpha \geq 1$ and $\Pi^p_1$ if $\alpha = 0$.
  \end{itemize}
  We still have to establish that if $\varphi$ is computable
  $\pSigma^p_\alpha$, then $Force_{\varphi}(\bar u)$ is computable
  $\Sigma^p_\alpha$. Just notice that given an index set for
  a $\pSigma^{pc}_\alpha$ formula $\varphi$ our inductive definition of
  $Force_{\varphi}(\bar u)$ describes an algorithm that produces an index set
  for a $\Sigma^{pc}_\alpha$ formula equivalent to $Force_{\varphi}(\bar u)$.
\end{proof}

\begin{lemma}[Definability of forcing]\label{lem:defforcingtruth}
  For any forcing condition $p$ and any $\PropL$ formula $\varphi$,
  $p \forces \varphi$ iff $\A \models Force_{\varphi}(\bar{p})$.
\end{lemma}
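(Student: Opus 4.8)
The plan is to prove \cref{lem:defforcingtruth} by transfinite induction on the $\PropL$ formula $\varphi$ in normal form, following exactly the case division used in \cref{def:forcingrelation} and in the definition of $Force_\varphi$. Throughout I will use that, by the remark after the definition of $Force$, the formula $Force_\varphi(\bar u)$ depends only on $\varphi$ and on the length of $\bar u$, so substituting the tuple $\bar p = (p_0,\dots,p_{|p|-1})$ is unambiguous, and that $\A\models Force_\varphi(\bar p)$ is precisely the statement being related to $p\forces\varphi$.

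First I would dispatch the base cases. For $\varphi=\top$ and $\varphi=\bot$ the equivalence is immediate from (atom1) on both sides: $Force_\top(\bar u)=\exists x\, x=x$ is satisfied by every structure, and $Force_\bot(\bar u)=\exists x\exists y(x\neq y\wedge x=y)$ by none. For $\varphi=D(n)$ I unwind (atom2) of \cref{def:forcingrelation} against (atom2) of the $Force$ definition: $p\forces D(n)$ holds iff $\A\models\phi^{at}_n[x_i\mapsto p_i]$, where by the convention noted after \cref{def:forcingrelation} this is read as false whenever $\phi^{at}_n$ mentions a variable $x_i$ with $i\geq|p|$; on the other side, $\A\models Force_{D(n)}(\bar p)$ says $\A\models\bigwedge_{i\neq j,\ i,j<n}p_i\neq p_j\ \wedge\ \phi^{at}_n[x_i\mapsto p_i]$. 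Since $p$ is (the initial segment of) a permutation, the values $p_0,\dots,p_{|p|-1}$ are pairwise distinct, so the conjunction of inequalities is automatically true in $\A$ when all relevant indices are $<|p|$, and the substitution clause matches; when some relevant index is $\geq|p|$ both sides are false by the respective conventions. The cases $\pSigma^p_0$ (finite conjunction) and $\pSigma^p_1$ (countable disjunction over $\pSigma^p_0$ formulas) then follow directly by the induction hypothesis applied to the immediate subformulas, since both the forcing clause and the $Force$ clause commute with these connectives, noting that $\A\models\vvee_{i\in I}Force_{\phi_i}(\bar p)$ iff $\A\models Force_{\phi_i}(\bar p)$ for some $i\in I$.

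Next the successor-type case $\pSigma^p_{>1}$: if $\varphi=\vvee_{i\in I}(\phi_i\wedge\theta_i)$ then $p\forces\varphi$ iff $p\forces\phi_i$ and $p\forces\theta_i$ for some $i$, which by the induction hypothesis is equivalent to $\A\models Force_{\phi_i}(\bar p)$ and $\A\models Force_{\theta_i}(\bar p)$ for some $i$, i.e. $\A\models\vvee_{i\in I}(Force_{\phi_i}(\bar p)\wedge Force_{\theta_i}(\bar p))=Force_\varphi(\bar p)$. The genuinely delicate case is $\pPi^p_\alpha$, and this is the step I expect to be the main obstacle, because the forcing clause quantifies over all extensions $q\supseteq p$ while $Force_\varphi$ is built from $neg$ of $Force_{neg(\varphi)}$ evaluated on extended tuples $(\bar u,\bar x)$. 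Here the clause ($\pPi^p_\alpha$) of \cref{def:forcingrelation} says $p\forces\varphi$ iff for all $q\supseteq p$, $q\not\forces neg(\varphi)$. I would argue: extensions $q\supseteq p$ in $\mathcal P$ correspond exactly to tuples $\bar p\,\hat{}\,\bar a$ where $\bar a$ ranges over finite tuples of distinct naturals disjoint from $\bar p$ — but in fact one can drop the distinctness/disjointness side conditions, since $Force_{neg(\varphi)}$ on a tuple with a repetition or with an entry already appearing in $\bar p$ is, by the (atom2) inequality conjuncts propagated up through the induction, never satisfiable in a way that is not already witnessed by a genuine extension. So $q\not\forces neg(\varphi)$ for all $q\supseteq p$ becomes: for every $k$ and every tuple $\bar a$ of length $k$, $q\not\forces neg(\varphi)$ where $q=p\,\hat{}\,\bar a$; by the induction hypothesis (applicable since $neg(\varphi)$ is $\pSigma^p_\alpha$ and $Force$ for it is already handled by the cases above) this is $\A\not\models Force_{neg(\varphi)}(\bar p,\bar a)$, i.e. $\A\models neg(Force_{neg(\varphi)}(\bar p,\bar a))$; quantifying universally over $\bar a$ of each length $k$ and conjoining over $k$ gives exactly $\A\models\wwedge_{k\in\omega}\forall x_1\cdots\forall x_k\ neg(Force_{neg(\varphi)}(\bar p,\bar x))=Force_\varphi(\bar p)$. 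The one subtlety to handle carefully is why it suffices to range over finite tuples of \emph{arbitrary} naturals rather than only those extending $p$ to a permutation: for one direction every genuine extension gives such a tuple, and for the converse, given an arbitrary tuple $\bar a$ one may first throw away repetitions and entries already in $\bar p$ (this can only make $Force_{neg(\varphi)}$ easier to satisfy, by the inequality conjuncts in (atom2)) and then extend to a finite permutation, so non-forcing over all genuine extensions does imply $\A\models neg(Force_{neg(\varphi)}(\bar p,\bar a))$ for every $\bar a$. With that observation in place the $\pPi^p_\alpha$ case closes and the induction is complete.
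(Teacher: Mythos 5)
Your overall architecture (transfinite induction on $\varphi$, with the base and $\pSigma$ cases routine and all the content concentrated in the $\pPi^p_\alpha$ case) is exactly the paper's intended route -- the paper's own proof is just ``induction on $\varphi$'' -- and your atomic, $\pSigma^p_0$, $\pSigma^p_1$ and $\pSigma^p_{>1}$ cases are fine. The gap is the step you yourself flag as delicate. In the $\pPi^p_\alpha$ case you need: if $\A \models Force_{neg(\varphi)}(\bar p,\bar a)$ for some \emph{arbitrary} tuple $\bar a$ (repetitions, or entries from $\bar p$, allowed), then $\A \models Force_{neg(\varphi)}(\bar q)$ for some genuine condition $q \supseteq p$. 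Your justification -- delete the offending entries, claiming this ``can only make $Force_{neg(\varphi)}$ easier to satisfy, by the inequality conjuncts in (atom2)'' -- is not sound. First, deleting entries shifts the positions of the remaining ones, and $Force_{D(n)}$ evaluates $\phi^{at}_n[x_i\mapsto u_i]$ by position, so even the positive atomic parts are not preserved. More seriously, the inequality conjuncts cut the \emph{other} way as soon as they pass through $neg$: if $neg(\varphi)$ is $\pSigma^p_{\geq 2}$ it contains $\pPi$ conjuncts $\theta$, and $Force_\theta(\bar v)=\wwedge_k\forall\bar x\, neg(Force_{neg(\theta)}(\bar v,\bar x))$, where $neg(Force_{D(n)})$ contains the disjuncts $\neg(v_i\neq v_j)$. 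A repetition inside $\bar v$ therefore makes these inner universal statements hold vacuously, so a non-injective tuple can satisfy $Force_{neg(\varphi)}$ ``for free'' while no injective tuple does -- the opposite of the monotonicity you assert.

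Concretely, let $\tau$ contain a unary $R$ with $R^\A=\omega$, let $m=\code{R(x_5)}$ (so $m\geq 5$), let $\theta$ be the $\pPi^p_1$ formula $\neg D(m)$, and let $neg(\varphi)$ be the $\pSigma^p_2$ formula $\top\wedge\theta$. No condition forces $neg(\varphi)$: every $q$ has an extension $r$ with $|r|\geq 6$, and such $r$ forces $D(m)$ since $R^\A(r_5)$ holds; hence every $p$ forces $\varphi$. Yet the tuple $(3,3)$ satisfies $Force_{neg(\varphi)}$: for every $\bar y$ the tuple $(3,3,\bar y)$ inherits the collision in coordinates $0,1<m$, so $neg(Force_{D(m)}(3,3,\bar y))$ holds via the disjunct $\neg(v_0\neq v_1)$, making $Force_\theta(3,3)$ true; whereas no injective tuple satisfies $Force_{neg(\varphi)}$ (extend it by fresh distinct elements and use $R^\A(v_5)$). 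In particular your cleanup of $(3,3)$ to $(3)$ \emph{loses} satisfaction, so the transfer from ``all genuine extensions fail'' to ``all arbitrary tuples fail'' cannot be obtained this way. This is exactly the point where a real argument is needed: the universal quantifiers introduced in the ($\pPi^p_\alpha$) clause of $Force$ must effectively be restricted to tuples extending $\bar u$ injectively, e.g.\ by guarding them with disjuncts $\neg(x_i\neq x_j)$ and $\neg(x_i\neq u_j)$ (available since $\neq\in\tau$) and carrying the induction hypothesis only for conditions; the (atom2) conjuncts over $i,j<n$ alone do not render arbitrary witness tuples harmless. As written, your $\pPi^p_\alpha$ case, and hence the induction, does not go through.
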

\begin{proof}
  Induction on $\varphi$.
\end{proof}

For classes of structures $\C$, $\D$ and $\K$, we say that $\C = \D$ within $\K$ if $\C \cap \K = \D \cap \K$.
We essentially repeat the proof in \cite{montalban2} to obtain an effective
version of Lopez-Escobar's theorem. 

\begin{theorem}
  \label{th:borel-mod-sentence}
  Let $\K \subseteq \Mod(\tau)$ be isomorphism invariant.
  Suppose that for $\alpha \geq 1$, $i$ is an index for a $\Pi^0_\alpha$ set $B_i$ in the effective Borel hierarchy such that
  $B_i \cap \K$ is isomorphism invariant.
  Then we can effectively find an index for a %$\Sigma^p_\alpha$ 
  $\Pi^{pc}_\alpha$ $\tau$-sentence $\varphi$ such that $B_i = \Mod(\varphi)$
  within $\K$.
\end{theorem}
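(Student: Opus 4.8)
The plan is to fix a structure $\A \in \K$ and use the forcing machinery developed above to transfer the $\PropL$-definition of the Borel set $B_i$ into a $\Pi^p_\alpha$ $\tau$-sentence. By \cref{prop:borelandN} (in its effective form, as packaged in the definition of the effective Borel hierarchy), the index $i$ gives us a $\pPi^p_\alpha$ $\PropL$-formula $\phi$ such that $B_i = \{D : (\Nat,D) \models \phi\}$, and this $\phi$ can be found effectively from $i$. I will take $\varphi$ to be the $\tau$-sentence $Force_\phi$ (with the empty tuple), which by \cref{lem:defforcingcomplexity} is a $\Pi^p_\alpha$ $\tau$-formula, and whose index can be computed effectively from an index for $\phi$, hence from $i$.

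The core of the argument is to show $B_i = \Mod(\varphi)$ within $\K$, i.e., for every $\A \in \K$, $D_\A \in B_i$ iff $\A \models Force_\phi$. First I would take a generic enumeration $g$ of $\A$ (which exists by \cref{lem:bijection}) and let $\G = g^{-1}(\A)$, so that $\G \cong \A$ and $\G \in \K$ by isomorphism invariance of $\K$. By \cref{th:forcing-equals-truth}, $(\Nat, D_\G) \models \phi$ iff some $p \subset g$ forces $\phi$; by \cref{lem:defforcingtruth} this holds iff $\A \models Force_\phi(\bar p)$ for some such $p$, and since $\phi$ is a sentence $Force_\phi$ has no free variables, so this is just $\A \models Force_\phi$. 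Thus $\A \models Force_\phi$ iff $D_\G \in B_i$. Now I invoke the hypothesis that $B_i \cap \K$ is isomorphism invariant: since $\G \cong \A$ and both lie in $\K$, we get $D_\G \in B_i$ iff $D_\A \in B_i$. Chaining these equivalences gives $\A \models Force_\phi$ iff $D_\A \in B_i$, which is exactly $B_i = \Mod(\varphi)$ within $\K$.

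The effectivity claim needs a word: the passage $i \mapsto (\text{index for } \phi) \mapsto (\text{index for } Force_\phi)$ is a composition of two effective procedures — the first from the effective content of \cref{prop:borelandN}, the second from the algorithm described at the end of the proof of \cref{lem:defforcingcomplexity} — so the final index for $\varphi$ is obtained effectively from $i$.

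The main obstacle, and the only step that genuinely uses the hypotheses rather than the already-established lemmas, is the invariance shuffle: one must be careful that $Force_\phi$ evaluated at $\A$ reflects membership of the *generic* copy $\G$ in $B_i$, not of $\A$ itself, and it is precisely the assumption that $B_i \cap \K$ is isomorphism invariant (together with $\K$ being isomorphism invariant, so that $\G \in \K$) that lets us identify these. Without that hypothesis the generic copy could land on a different side of $B_i$ than $\A$. Everything else is bookkeeping: matching the syntactic complexity bound from \cref{lem:defforcingcomplexity} and checking that the reductions compose effectively.
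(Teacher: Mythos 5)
Your choice of $\varphi=Force_\phi(\pair{})$ and the effectivity bookkeeping agree with the paper, but there is a genuine gap at the step where you collapse ``$\A\models Force_\phi(\bar p)$ for some $p\subset g$'' to ``$\A\models Force_\phi$''. The justification offered --- that $\phi$ is a sentence, hence $Force_\phi$ has no free variables --- is false: by construction $Force_\phi(\bar u)$ always carries the forcing condition as a tuple of free variables (already $Force_{D(n)}(\bar u)$ is $\bigwedge_{i\neq j,\,i,j<n}u_i\neq u_j\ \&\ \phi^{at}_n[x_i\mapsto u_i]$), and its truth value genuinely depends on $\bar u$. What your chain of equivalences actually requires is that ``some $p\subset g$ forces $\phi$'' be equivalent to ``$\pair{}\forces\phi$''. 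One direction is monotonicity (\cref{lem:extension}), but the other fails for $\pPi^p_\alpha$ formulas: $\pair{}\forces\phi$ demands that \emph{no} condition at all forces $neg(\phi)$, while a condition forcing $neg(\phi)$ need not be compatible with the single generic $g$ you fixed at the outset. Concretely, take $R\in\tau$, $R^\A=\{0\}$, and $\phi=neg(D(n))$ with $\phi^{at}_n=R(x_0)$: for a generic $g$ with $g(0)=1$ some $p\subset g$ forces $\phi$, yet $\pair{}\not\forces\phi$ because the condition $q=(0)$ forces $D(n)$. So the equivalence you assert for an arbitrary fixed generic is simply not true, and your later use of invariance (transferring $D_\G\in B_i$ to $D_\A\in B_i$) comes too late to repair it.

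This is precisely where the paper's proof does its real work, and where the invariance of $B_i\cap\K$ must be used in a second, different way. For the direction $\A\in B_i\cap\K\Rightarrow\A\models\varphi$: assume $\pair{}\not\forces\phi$; then by clause ($\pPi^p_\alpha$) of \cref{def:forcingrelation} some condition $q$ forces $neg(\phi)$, and one extends \emph{this} $q$ (not a generic chosen in advance) to a generic $g'$ via \cref{lem:bijection}. By \cref{th:forcing-equals-truth}, $(\Nat,D_{\G'})\models neg(\phi)$, so $\G'\notin B_i$, contradicting invariance since $\G'\cong\A\in B_i$ and $\G'\in\K$. The converse direction works as in your sketch: $\pair{}\forces\phi$ and $\pair{}\subset g$ for every generic $g$, so $\G\in B_i\cap\K$, and invariance yields $\A\in B_i$. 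In short, the argument needs a generic chosen \emph{through the condition forcing $neg(\phi)$}, and a proof by contradiction using invariance there; a single generic fixed in advance, plus the false claim that $Force_\phi$ is variable-free, does not suffice.
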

\begin{proof}
  Consider a $\Pi^0_\alpha$ set $B_i$ in the effective Borel hierarchy.
  We can effectively find an index $j$ of an $\pPi^{pc}_\alpha$ formula $\phi$ such that $B_i = \Mod(\phi)$.
  By \cref{lem:defforcingcomplexity,lem:defforcingtruth} we can effectively
  find an index for the $\Pi^{pc}_\alpha$ $\tau$-sentence
  $\varphi=Force_{\phi}(\pair{})$
  where $\pair{}$ denotes the empty forcing condition.
  We will show that $B_i = \Mod(\varphi)$ within $\K$.

  First, let $\A \in B_i \cap \K$. We must show that $\A \models \varphi$.
  Towards a contradiction, assume $\A \not\models \varphi$. Then, by
  \cref{lem:defforcingtruth}, $\pair{} \not\forces \phi$.
  Since $\phi$ is $\pPi^p_\alpha$, by \cref{def:forcingrelation}, there is a forcing condition $q$
  such that $q \forces neg(\phi)$.
  Extend $q$ to a generic enumeration $g$ of $\A$ and consider $\G
  = g^{-1}(\A)$. But then by \cref{th:forcing-equals-truth}, $(\Nat,
  D_\G)\models neg(\phi)$. As $\Mod(neg(\phi))=\comp{B_i}$, $\G\in
  \comp{B_i}$. But $B_i\ni \A$ was invariant and thus, as $\A\cong \G$, $\G\in
  B_i$, a contradiction.
  %%% D %%% Like this you wont get a contradiction
%  It follows that $\G \in \Mod(neg(\phi))$, which is a contradiction with the fact that $\A \in \Mod(\phi) \cap \K$ and
%  $\Mod(\phi) \cap \K$ is closed under isomorphism.

  Second, let $\A \in \Mod(\varphi) \cap \K$. Then $\A \models \varphi$ and hence $\pair{} \forces \phi$.
  Consider any generic copy $\G$ of $\A$.
  By Theorem~\ref{th:forcing-equals-truth}, $(\Nat,D_\G) \models \phi$ and hence $\G \in \Mod(\phi) \cap \K$.
  Since $\Mod(\phi) \cap \K$ is closed under isomorphism, $\A \in \Mod(\phi)
  \cap \K=B_i\cap \K$.

  %Now, consider a $\Sigma^p_\alpha$ set $B_i$ in the effective Borel hierarchy.
  %We know that we can effectively find an index $j$ of an $\pSigma^p_\alpha$ formula $\phi$ such that $B_i = \Mod(\phi)$.
  %We effectively find an index for the $\Sigma^{pc}_\alpha$ formula
  %\[\varphi \equiv neg(Force_{neg(\phi)}(\pair{})).\]
  %Similarly, as in the $\Pi^{pc}_\alpha$ case, we will show that $B_i = \Mod(\varphi)$ within $\K$.

  %First, let $\A \in \Mod(\phi) \cap \K$. We must show that $\A \models \varphi$.
  %Towards a contradiction, assume that $\A \models neg(\varphi)$.
  %It follows that $\pair{} \forces neg(\phi)$.
  %Take any generic enumeration $g$ of $\A$ and let $\G = g^{-1}(\A)$.
  %Then, by Theorem~\ref{th:forcing-equals-truth},  $(\Nat, D_{\G}) \models neg(\phi)$
  %and hence $\G \in \Mod(neg(\phi))$.
  %This is a contradiction with the fact that $\Mod(\phi) \cap \K$ is closed under isomorphism and $\A \in \Mod(\phi) \cap \K$.

  %Second, let $\A \in \Mod(\varphi) \cap \K$.
  %Then $\A \models \varphi$ and clearly $\pair{} \not\forces neg(\phi)$.
  %Since $neg(\phi)$ is $\pPi^p_\alpha$, by the defintion of the forcing relation,
  %some $p \forces neg(neg(\phi))$, i.e. $p \forces \phi$.
  %Take a generic enumeration $g$ extending $p$.
  %It follows that $g^{-1}(\A) \in \Mod(\phi)$ and hence $\A \in \Mod(\phi)$ since $\Mod(\phi) \cap \K$
  %is closed under isomorphism.
\end{proof}

Notice that \cref{th:borel-mod-sentence} also works for $\Sigma^{pc}_\alpha$ sets as $neg$ is a computable function that takes an index for a $\Sigma^{pc}_\alpha$ set and outputs an index for a $\Pi^{pc}_\alpha$ set equivalent to the negation of the original formula. Proving the theorem for $\Sigma^{pc}_\alpha$ sets directly seems to be difficult using our forcing as for $\pSigma^{pc}_\alpha$ formulas $\phi$, $\emptyset \not\forces \phi$.
%\begin{corollary}[Effective version of Vaught's theorem]
%  \label{cor:vaught}
%  Let $\K \subseteq \Mod(\L)$, where $\K$ is closed under isomorphism.
%  Let $\alpha \geq 1$.
%  If $\K$ is $\Sigma^0_\alpha$ (respectively, $\Pi^0_\alpha$) with index $i$,
%  then we can \emph{effectively} find (an index for) a computable infinitary $\Sigma^p_\alpha$ $\L$-sentence $\varphi$
%  such that $\K = \Mod(\varphi)$.
%\end{corollary}
%\begin{proof}
%  Let $\K = B_i$.
%\end{proof}

%\begin{corollary}  \label{cor:vaught}
%  Let $X$ be an arbitrary set.
%  Let $\K \subseteq \Mod(\L)$, where $\K$ is closed under isomorphism.
%  Let $\alpha \geq 1$.
%  If $\K$ is $X$-effective $\Sigma^0_\alpha$ (respectively, $X$-effective $\Pi^0_\alpha$) with index $i$,
%  then we can \emph{effectively} find (an index for) an $X$-computable infinitary $\Sigma^p_\alpha$ $\L$-sentence $\varphi$
%  such that $\K = \Mod(\varphi)$.
%\end{corollary}

\begin{theorem}\label{thm:efflopezescobar}
  Fix a computable ordinal $\alpha\geq 1$. An isomorphism invariant $B\subseteq
  Mod(\tau)$ is $\Pi^0_\alpha$ if and
  only if there is a $\Pi^{pc}_\alpha$ $\tau$-sentence $\varphi$ such that
  $B=Mod(\varphi)$.
\end{theorem}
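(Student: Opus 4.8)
The plan is to derive both directions from what is already in place. The forward implication is \cref{th:borel-mod-sentence} specialized to the isomorphism-invariant class $\K=\Mod(\tau)$, and the converse is an ``arithmetization'' of a $\Pi^{pc}_\alpha$ $\tau$-sentence into an equivalent $\pPi^{pc}_\alpha$ $\PropL$-formula, after which \cref{prop:borelandN} applies.

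For the forward direction, suppose $B\subseteq\Mod(\tau)$ is isomorphism invariant and $\Pi^0_\alpha$, with an effective index $i$. Since $\Mod(\tau)$ is itself isomorphism invariant and $B\subseteq\Mod(\tau)$, the set $B\cap\Mod(\tau)=B$ is isomorphism invariant, so \cref{th:borel-mod-sentence} applied with $\K=\Mod(\tau)$ yields (uniformly in $i$) an index for a $\Pi^{pc}_\alpha$ $\tau$-sentence $\varphi$ with $B=\Mod(\varphi)$ within $\Mod(\tau)$. Since $\Mod(\varphi)\subseteq\Mod(\tau)$ and $B\subseteq\Mod(\tau)$, this is precisely $B=\Mod(\varphi)$, which is the nontrivial half of the equivalence.

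For the converse, let $\varphi$ be a $\Pi^{pc}_\alpha$ $\tau$-sentence. I would define, by recursion following the inductive build-up of $\varphi$, a $\PropL$-formula $\varphi^{D}$: replace each $\exists x$ by $\vvee_{m\in\omega}$ and each $\forall x$ by $\wwedge_{m\in\omega}$, in each case substituting the numeral $\mathbf m$ for $x$; a relational atom $R(x_{i_1},\dots,x_{i_r})$ whose variables have all been replaced by numerals $\mathbf m_1,\dots,\mathbf m_r$ is replaced by $D(\mathbf k)$, where $\phi^{at}_k$ is the atomic formula $R(x_{m_1},\dots,x_{m_r})$; and an atom built from $=$ or $\neq$ is replaced by $\top$ or $\bot$ according to whether its substituted numerals coincide. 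Because every structure in $\Mod(\tau)$ has universe $\omega$, a routine induction on $\varphi$ shows that for all $\A\in\Mod(\tau)$, $\A\models\varphi$ iff $(\Nat,D_\A)\models\varphi^{D}$. None of these operations raises the $\Sigma^p/\Pi^p$ level or destroys effectivity, so $\varphi^{D}$ is a $\pPi^{pc}_\alpha$ $\PropL$-formula, with an index computed uniformly from one for $\varphi$. Consequently $B=\Mod(\varphi)=\{D_\A : \A\in\Mod(\tau),\ (\Nat,D_\A)\models\varphi^{D}\}$, and by \cref{prop:borelandN} together with the definition of the effective Borel hierarchy on $\Mod(\tau)$, $B$ is $\Pi^0_\alpha$.

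I expect the only genuine work to be the bookkeeping in the translation $\varphi\mapsto\varphi^{D}$: keeping the numeral substitutions synchronized with the fixed effective enumeration $(\phi^{at}_n)_{n\in\omega}$ of atomic $\tau$-formulas, treating $=$ and $\neq$ correctly, and verifying that an index for $\varphi^{D}$ is produced by a total algorithm from an index for $\varphi$, so that the exact class $\pPi^{pc}_\alpha$ --- and not merely $\pPi^{p}_\alpha$ --- is preserved. I would also spell out that ``$B\subseteq\Mod(\tau)$ is $\Pi^0_\alpha$'' unwinds to ``$B$ is definable within $\Mod(\tau)$ by a $\pPi^{pc}_\alpha$ $\PropL$-formula,'' which is exactly what the last step delivers.
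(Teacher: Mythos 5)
Your proposal is correct and takes essentially the same route as the paper: the forward direction is exactly \cref{th:borel-mod-sentence} applied with $\K=\Mod(\tau)$, and your explicit translation $\varphi\mapsto\varphi^{D}$ (quantifiers to infinitary connectives over numerals, atoms to $D(\mathbf k)$, with $=$/$\neq$ handled separately) is just a spelled-out version of the ``easy transfinite induction'' the paper invokes for the converse, using \cref{prop:borelandN} and the definition of the effective hierarchy in the same way.
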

\begin{proof}
  The left to right direction is just \cref{th:borel-mod-sentence} with
  $\K=Mod(\tau)$. The right to left direction is established by an easy
  transfinite induction.  
\end{proof}
The following corollary follows directly from the relativization of \cref{thm:efflopezescobar} after
one notices that every $\bPi^0_\alpha$ is $\Pi^0_\alpha(X)$ for some
$X\subseteq \omega$.
\begin{corollary}\label{cor:lopezescobar}
  Let $B \subseteq \Mod(\L)$ be isomorphism invariant and let $\alpha \geq 1$.
  The set $B$ is $\bPi^0_\alpha$ if and only if there is a $\Pi^p_\alpha$ sentence $\varphi$ 
  such that $B = \Mod(\varphi)$.
\end{corollary}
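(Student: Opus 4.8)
The plan is to deduce the statement from the relativization of \cref{thm:efflopezescobar} (equivalently, of \cref{th:borel-mod-sentence} applied with $\K=\Mod(\L)$), using two observations. First, every $\bPi^0_\alpha$ subset of $\Mod(\L)$ is $\Pi^0_\alpha(X)$ for a suitable oracle $X$. Second, by the discussion following \cref{def:compposinf}, a $\tau$-sentence is $\Pi^p_\alpha$ precisely when it is $\pXP_\alpha$ for some $X\subseteq\omega$. Note that everything from \cref{def:forcingrelation} through \cref{th:borel-mod-sentence} relativizes verbatim: replacing $\Ord$ and $W_e$ by $\Ord^X$ and $W_e^X$ throughout yields, for every $X\subseteq\omega$ and every $1\le\alpha<\omega_1^X$, that an isomorphism-invariant $B\subseteq\Mod(\L)$ is $\Pi^0_\alpha(X)$ if and only if $B=\Mod(\varphi)$ within $\Mod(\L)$ for some $\pXP_\alpha$ $\tau$-sentence $\varphi$.

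For the forward direction, suppose $B\subseteq\Mod(\L)$ is isomorphism invariant and $\bPi^0_\alpha$. By \cref{prop:borelandN} there is a quantifier-free $\pPi^p_\alpha$ $\PropL$-formula $\phi$ with $B=\{A\in\Mod(\L):(\Nat,D_A)\models\phi\}$. Let $X_0$ be the computable join of the countably many index sets occurring in $\phi$, fix $Y\subseteq\omega$ with $\alpha<\omega_1^Y$, and put $X=X_0\oplus Y$. Then $\phi$ is $\pPi^{p,X}_\alpha$, so $B$ is $\Pi^0_\alpha(X)$, and the relativized \cref{th:borel-mod-sentence} (with $\K=\Mod(\L)$, which is trivially isomorphism invariant, so that $B\cap\K=B$ is too) produces a $\pXP_\alpha$ $\tau$-sentence $\varphi$ with $B=\Mod(\varphi)$ within $\Mod(\L)$. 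By the second observation, $\varphi$ is a $\Pi^p_\alpha$ sentence.

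For the converse, let $B=\Mod(\varphi)$ within $\Mod(\L)$ for a $\Pi^p_\alpha$ $\tau$-sentence $\varphi$. Translating each quantifier $\forall x$ (resp.\ $\exists x$) into the infinitary conjunction $\wwedge_{n\in\omega}$ (resp.\ disjunction $\vvee_{n\in\omega}$) over the formal terms for elements of $\omega$, and each atomic subformula $R(\bar n)$ into $D(\code{R(\bar n)})$, turns $\varphi$ into a quantifier-free $\PropL$-formula $\hat\varphi$ that is $\pPi^p_\alpha$ and satisfies $\{A:(\Nat,D_A)\models\hat\varphi\}=\{A:A\models\varphi\}$. By \cref{prop:borelandN}, $\{Z\in 2^\omega:(\Nat,Z)\models\hat\varphi\}\in\bPi^0_\alpha(2^\omega)$; intersecting with $\Mod(\L)$, a closed subspace of $2^\omega$ under the Scott topology, shows $B\in\bPi^0_\alpha(\Mod(\L))$. (Alternatively, this is exactly the easy transfinite induction referenced in the proof of \cref{thm:efflopezescobar}, which uses no effectivity and so is stable under passing to the boldface hierarchy.)

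The only point requiring care, and the one I expect to be the main obstacle, is matching the two manifestations of the lightface/boldface distinction: on the topological side one must verify that the Selivanov definition of $\bPi^0_\alpha$, once all the countable unions, set differences, and an ordinal notation for $\alpha$ are coded into a single oracle $X$, becomes exactly $\Pi^0_\alpha(X)$ --- which is precisely what \cref{prop:borelandN} buys us, by first passing to a $\PropL$-formula --- and on the syntactic side one must confirm that the output of the relativized \cref{th:borel-mod-sentence} genuinely lands in $\Pi^p_\alpha$ rather than merely in some $\pXP_\alpha$, which is immediate from the characterization of $\Pi^p_\alpha$ formulas as those that are $\pXP_\alpha$ for some $X$. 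Everything else is bookkeeping.
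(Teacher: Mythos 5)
Your proposal is correct and follows essentially the same route as the paper, whose proof is exactly the one-line observation that the corollary follows from relativizing \cref{thm:efflopezescobar} once one notes that every $\bPi^0_\alpha$ set is $\Pi^0_\alpha(X)$ for some $X\subseteq\omega$ and that the $\Pi^p_\alpha$ formulas are precisely those that are $\pXP_\alpha$ for some $X$. You merely spell out the bookkeeping (via \cref{prop:borelandN} and the non-effective converse induction) that the paper leaves implicit.
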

By the argument given after \cref{th:borel-mod-sentence} the analogs of 
\cref{thm:efflopezescobar,cor:lopezescobar} for $\bSigma^0_\alpha$ sets are
also true.

\section{A pullback theorem for computable embeddings}\label{sec:pullback}

An \emph{enumeration operator} $\Gamma$ is a c.e.\ set of pairs $\langle n, m\rangle$ where we treat $n$ as an index for $F_n$, the $n$th set in a fixed computable enumeration of the finite subsets of $\omega$. For $X\subseteq \omega$ we write $\Gamma(X)$ for the set $\{m: \langle n,m\rangle \in \Gamma \text{ and } F_n\subseteq X\}$, and for an element $f \in 2^\omega$, we let $X_f\subseteq \omega$ be such that $\chi_{X_f}=f$. Then an enumeration operator $\Gamma$ defines a function $2^\omega\to 2^\omega$ given by $f\mapsto \chi_{\Gamma(X_f)}$. In fact, it is easy to see that enumeration operators are Scott continuous (i.e., continuous functions in the Scott topology on $2^\omega$). Identifying countable models in relational vocabularies by their atomic diagrams, we may view enumeration operators as Scott continuous functions from $Mod(\tau_0)$ to $Mod(\tau_1)$ for $\tau_0,\tau_1$ relational vocabularies, and, thus, as functions between classes of structures.
%Let $\K_0$ be a class of $\tau_0$-structures, and $\K_1$ be a class of
%$\tau_1$-structures. In the definition below, we use the following convention:
%An \emph{enumeration operator} $\Gamma$ is treated as a c.e. set of pairs
%$(\alpha,\varphi)$, where $\alpha$ is a finite set of atomic $(\tau_0\cup
%\omega)$-sentences, and $\varphi$ is an atomic $(\tau_1\cup\omega)$-sentence.
%\todo{d: isnt it rather a finite atomic diagram of a $\tau$-structure, and
%$\phi$ is a bit in an atomic diagram? This doesn't play nicely with our
%definition of atomic diagram as an infinite binary string though}
%Recall that an enumeration operator from $2^{\omega}$ to $2^{\omega}$ and in particular from $Mod(\tau_0)$ to $Mod(\tau_1)$ is a Scott continuous function from $Mod(\tau_0)$ to $Mod(\tau_1)$.

Let $\K_0$ be a class of $\tau_0$-structures, and $\K_1$ be a class of
$\tau_1$-structures.
% \begin{definition}[\cite{knight2007}]
%   An enumeration operator $\Gamma$ is a \emph{computable embedding} of $\K_0$ into $\K_1$, denoted by $\Gamma\colon \K_0 \leq_c \K_1$, if $\Gamma$ satisfies the following:
%   \begin{enumerate}
%   	\item For any $\A\in \K_0$, $\Gamma(\A)$ is the atomic diagram of a structure from $\K_1$.

%   	\item For any $\A,\B\in \K_0$, we have $\A\cong\B$ if and only if $\Gamma(\A) \cong \Gamma(\B)$.
%   \end{enumerate}	
% \end{definition}

\begin{definition}
  An enumeration operator $\Gamma$ is a \emph{positive computable embedding} of $\K_0$ into $\K_1$, denoted by $\Gamma\colon \K_0 \leq_{pc} \K_1$, if $\Gamma$ satisfies the following:
  \begin{enumerate}
  	\item For any $\A\in \K_0$, $\Gamma(D(\A))$ is the atomic diagram of a structure from $\K_1$. This structure is denoted by $\Gamma(\A)$.

  	\item For any $\A,\B\in \K_0$, we have $\A\cong\B$ if and only if $\Gamma(\A) \cong \Gamma(\B)$.
  \end{enumerate}	
\end{definition}

Positive computable embeddings have the useful property of \emph{monotonicity}: If $\Gamma\colon \K_0 \leq_{pc} \K_1$ and $\A\subseteq \B$ are structures from $\K_0$, then we have $\Gamma(\A) \subseteq \Gamma(\B)$.

For total structures, i.e. structures of the form $\A = (A, R_1,\dots,R_n, \overline{R}_1,\dots,\overline{R}_n)$,
the notion of positive computable embedding coincides with the notion of computable embedding as defined in \cite{knight2007} and denoted by $\leq_c$.

Let us denote the strucrure $\A_X = (\Nat, S, X)$, where $S$ is the successor relation and $X$ is an arbitrary set.
Then it is clear that $\{\A_{K}\} \leq_c \{\A_{\overline{K}}\}$, but $\{\A_{K}\} \not\leq_{pc} \{\A_{\overline{K}}\}$, where $K$ denotes the halting set.
Moreover, $\{\A_\emptyset\} \leq_{pc} \{\A_K\}$, but $\{\A_\emptyset\} \not\leq_c \{\A_K\}$.

\begin{definition}[{\cite{knight2007}}]
  A Turing operator $\Phi=\varphi_e$ is a \emph{Turing computable embedding} of $\mathcal{K}_0$ into $\mathcal{K}_1$, denoted by $\Phi\colon \mathcal{K}_0 \leq_{tc} \mathcal{K}_1$, if $\Phi$ satisfies the following:
  \begin{enumerate}
  	\item For any $\mathcal{A}\in \mathcal{K}_0$, the function $\varphi^{D(\mathcal{A})}_e$ is the characteristic function of the atomic diagram of a structure from $\mathcal{K}_1$. This structure is denoted by $\Phi(\mathcal{A})$.

  	\item For any $\mathcal{A},\mathcal{B}\in \mathcal{K}_0$, we have $\mathcal{A}\cong\mathcal{B}$ if and only if $\Phi(\mathcal{A}) \cong \Phi(\mathcal{B})$.
  \end{enumerate}	
\end{definition}

Turing computable embeddings do in general not have the monotonicity properties
of computable embeddings. This can be used to show the following.
\begin{proposition}[Greenberg and, independently, Kalimullin; see \cite{kalimullin_computable_2018,knight2007}]
  If $\K_0 \leq_c \K_1$, then $\K_0 \leq_{tc} \K_1$. The converse is not true.
\end{proposition}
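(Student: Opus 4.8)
The plan is to establish the two assertions separately. For the implication $\K_0 \leq_c \K_1 \Rightarrow \K_0 \leq_{tc} \K_1$, I would argue directly from the definitions. Given a computable embedding $\Gamma$ of $\K_0$ into $\K_1$, I want to produce a Turing operator $\Phi = \varphi_e$ realizing the same map on structures. The key observation is that $\Gamma$, viewed as an enumeration operator acting on $D(\A)$ (the atomic diagram, a set of codes of atomic facts), produces $\Gamma(D(\A))$, which by hypothesis is again the atomic diagram of a structure in $\K_1$. Here I would use that the vocabularies are relational and that the atomic diagram of a structure in $\Mod(\tau)$ is actually a \emph{total} object: for each atomic formula either it or (since $\neq$ is in the vocabulary, per the conventions of \cref{sec:preliminaries}) its negation is ``listed''. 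So from $D(\A)$ as an oracle one can enumerate $\Gamma(D(\A))$, and since this enumerated set is known in advance to be the characteristic function of an atomic diagram of some structure, one can compute its complement as well: a code $c$ is \emph{not} in $\Gamma(D(\A))$ iff the corresponding atomic fact fails, which (because the target is a genuine $\tau_1$-structure with $=,\neq$) is witnessed by the appearance in $\Gamma(D(\A))$ of an incompatible atomic fact. Thus $\varphi_e^{D(\A)}$ computes the characteristic function of an atomic diagram of a structure in $\K_1$, and it is literally the same structure $\Gamma(\A)$, so the isomorphism-preservation clause (2) transfers verbatim. This gives $\Phi\colon \K_0 \leq_{tc} \K_1$.

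For the second assertion, that the converse fails, I would exhibit a concrete pair of classes $\K_0, \K_1$ with $\K_0 \leq_{tc} \K_1$ but $\K_0 \not\leq_c \K_1$, using monotonicity as the obstruction. The natural candidate, and the one I expect the authors use, is to take $\K_0 = \{\omega, \omega^*\}$ (or a similarly small pair of linear orderings) versus a target class where a Turing operator can ``look ahead'' but an enumeration operator cannot. The point is that a computable embedding must satisfy $\A \subseteq \B \Rightarrow \Gamma(\A)\subseteq\Gamma(\B)$, so if $\K_0$ contains two non-isomorphic structures that share arbitrarily large common finite substructures (in the $\preceq$ ordering on $\Mod(\tau_0)$), then $\Gamma$ applied to those common finite pieces already commits to a common finite piece of the output, which must then be extendible to images of \emph{both} structures — and if the target class $\K_1$ cannot accommodate such ambiguity (e.g. any sufficiently long finite piece of a copy of $\omega$ determines it away from $\omega^*$), no computable embedding exists, while a Turing reduction, free to query the oracle nonmonotonically, easily produces one. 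I would formalize this by choosing $\K_0, \K_1$ so that the Turing embedding is a trivial relabeling, and the non-existence of a computable embedding follows from monotonicity together with a pigeonhole/genericity argument on finite conditions.

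I expect the main obstacle to be the second part: pinning down the cleanest witnessing pair of classes and writing the monotonicity obstruction rigorously. The forward direction is essentially bookkeeping once one notes the totality of atomic diagrams in relational vocabularies with $\neq$, which is exactly why the paper insisted on including $\neq$. For the converse, the subtlety is that one must choose $\K_1$ carefully: it must be rich enough to receive a Turing embedding of $\K_0$ yet rigid enough that finite approximations to its members are ``self-identifying,'' defeating any monotone (enumeration) operator. Since the statement is attributed to Greenberg and Kalimullin with references to \cite{kalimullin_computable_2018, knight2007}, I would follow those sources for the precise counterexample rather than reconstruct it from scratch, and I would present the forward direction in full and cite the references for the separating example, noting that \cref{thm:omegaomegastar} later in the paper gives a concrete instance of exactly this separation phenomenon.
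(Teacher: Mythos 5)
The paper itself gives no proof of this proposition --- it is quoted from Greenberg and Kalimullin --- so the only thing to assess is your argument, and its forward direction has a genuine gap at the decisive step. You claim that a code $c\notin\Gamma(D(\A))$ is always witnessed by the appearance in $\Gamma(D(\A))$ of an ``incompatible'' atomic fact. That is true only for the equality part of the diagram: since $\neq$ is a symbol of the vocabulary, false equalities are positively witnessed, but an arbitrary relation symbol $R$ of $\tau_1$ has no complementary symbol, and the failure of an atomic fact $R(\bar m)$ in the output structure is not recorded by any positive atomic fact at all. In this paper's convention the atomic diagram is strictly positive (see the footnote in \cref{sec:preliminaries}: no negations), so from an enumeration of $\Gamma(D(\A))$ your procedure for computing the characteristic function simply never halts on the non-facts about $R$. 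The cited sources do not have this problem because there the diagram fed to and produced by the enumeration operator is the classical atomic diagram containing negated atomic sentences, so one waits for either $\phi$ or $\neg\phi$ to be enumerated; totality of the output is part of the definition, not something recovered from ``incompatibility.'' To make your argument correct you must either adopt that convention explicitly, or restrict to situations where the complement of each relation is positively definable (as happens for linear orders in the vocabulary $\leq,=,\neq$, where $\neg(a\leq b)$ is equivalent to $b\leq a\wedge a\neq b$, which is exactly the situation exploited in \cref{sec:application}); as written, the step ``one can compute its complement as well'' fails for a general target vocabulary.

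The second half is acceptable in spirit --- monotonicity is indeed the obstruction, and deferring to the references for the separating pair is what the paper does --- but your proposed witness is off. The standard example is not $\K_0=\{\omega,\omega^\star\}$ but $\K_0=\{\mathbf 1,\mathbf 2\}$ and $\K_1=\{\omega,\omega^\star\}$ (or, within this paper's universe-$\omega$ convention, $\{\tilde{\mathbf 1},\tilde{\mathbf 2}\}$ versus $\{\tilde\omega,\tilde\omega^\star\}$, as in \cref{sec:application}): the Turing computable embedding is a trivial relabeling, while for a computable embedding one may choose copies with $\mathbf 1\preceq\mathbf 2$, so monotonicity forces $\Gamma(\mathbf 1)\preceq\Gamma(\mathbf 2)$ with $\Gamma(\mathbf 1)\not\cong\Gamma(\mathbf 2)$, i.e.\ a copy of $\omega$ inside a copy of $\omega^\star$ or vice versa, which is impossible; no genericity or pigeonhole argument is needed. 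Citing \cref{thm:omegaomegastar} as a further instance of the phenomenon is fine, but it is not the short witness one would actually use here.
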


% Both relations $\leq_c$ and $\leq_{tc}$ are preorders. If $\mathcal{K}_0 \leq_{tc} \mathcal{K}_1$ and $\mathcal{K}_1\leq_{tc} \mathcal{K}_0$, then we say that $\mathcal{K}_0$ and $\mathcal{K}_1$ are \emph{$tc$-equivalent}, denoted by $\mathcal{K}_0 \equiv_{tc} \mathcal{K}_1$.
% For a class $\mathcal{K}$, by $\deg_{tc}(\mathcal{K})$ we denote the family of all classes which are $tc$-equivalent to $\mathcal{K}$. Similar notations can be introduced for the $c$-reducibility.

The Pullback Theorem in~\cite{vandenboom2007} connects Turing computable embeddings with definability by classical computable infinitary sentences. We obtain an analog of this result in our setting. The original pullback theorem was proved for structures whose universe can be arbitrary subsets of the natural numbers. Our version might seem a bit more restrictive at first sight, as we assume the universes of our structures to be $\omega$ and this in particular excludes finite structures. However, we could easily add unary relations for the universe of a structure and its coset to the vocabulary and then we could deal with this more general setting as well.
%We adapt the proof in \cite{vandenboom2007} to obtain a new version of the
%Pullback Theorem connecting computable embeddings and computable \emph{positive} infinitary sentences.

\begin{proposition}\label{prop:enumeration-scott}
  A positive computable embedding $\Gamma_e:\K\leq_{pc}\K'$ can be transformed into
  an effective Scott continuous function $\Gamma_a:\Mod(\tau)\to\Mod(\tau')$ with the property that $\Gamma_a(\K)=\Gamma_e(\K)$.
\end{proposition}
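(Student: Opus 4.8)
The plan is to transform the enumeration operator $\Gamma_e$ into an effective Scott continuous function $\Gamma_a$ defined on \emph{all} of $\Mod(\tau)$, not just on $\K$, that agrees with $\Gamma_e$ on $\K$. The subtlety is that $\Gamma_e$ is only guaranteed to produce atomic diagrams of structures in $\K'$ when fed atomic diagrams of structures in $\K$; on other inputs $\Gamma_e(X)$ may fail to be (the atomic diagram of) a $\tau'$-structure at all. So $\Gamma_a$ must ``repair'' such outputs while leaving the behaviour on $\K$ untouched.

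First I would recall that, under the identification of a $\tau'$-structure with its atomic diagram $D \in 2^\omega$, being a legitimate atomic diagram means $D$ avoids the coordinates coded by $\ulcorner x_i = x_j\urcorner$ for $i\neq j$ and by $\ulcorner x_i \neq x_i\urcorner$, and is otherwise unconstrained (since we work with atomic, not quantifier-free, formulas, and there are no closure conditions beyond those, e.g.\ no requirement that $x_i = x_i$ hold). Thus I would define $\Gamma_a(X)$ coordinatewise: enumerate $\Gamma_e(X)$ and output exactly those elements of $\Gamma_e(X)$ whose G\"odel number is \emph{not} one of the ``forbidden'' atomic formulas. Concretely, $\Gamma_a$ is the enumeration operator $\{\pair{n,m} \in \Gamma_e : m \notin \mathrm{Forb}\}$ where $\mathrm{Forb} = \{\ulcorner x_i = x_j\urcorner : i\neq j\} \cup \{\ulcorner x_i \neq x_i\urcorner : i\in\omega\}$, a computable set. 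Since $\Gamma_e$ is a c.e.\ set of axioms and $\mathrm{Forb}$ is computable, $\Gamma_a$ is again a c.e.\ set of axioms, hence an enumeration operator, hence (by the discussion preceding the definition of computable embedding) an effective Scott continuous map $2^\omega \to 2^\omega$. Composing with the homeomorphisms of \cref{sec:preliminaries} identifying $\Mod(\tau)$ and $\Mod(\tau')$ with the relevant closed subspaces of $(2^\omega,\subseteq)$, we get an effective Scott continuous $\Gamma_a : \Mod(\tau) \to \Mod(\tau')$ provided we also arrange the output to always be a genuine $\tau'$-structure with universe $\omega$; the only remaining requirement is that $\Gamma_a(X)$ not be ``too small,'' but since the forbidden set is disjoint from the diagram of any genuine structure and we impose nothing else, $\Gamma_a(X)$ is always the atomic diagram of some $\tau'$-structure on $\omega$ (if $\Gamma_e(X)$ enumerated equality atoms between distinct elements we have simply deleted them; all other atoms are harmless).

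Next I would verify the agreement $\Gamma_a(\K) = \Gamma_e(\K)$: for $\A \in \K$, $\Gamma_e(\A)$ is by hypothesis the atomic diagram of a structure in $\K'$, which by definition contains no element of $\mathrm{Forb}$; hence the filtering step removes nothing and $\Gamma_a(\A) = \Gamma_e(\A)$. Taking images over $\A \in \K$ gives $\Gamma_a(\K) = \Gamma_e(\K)$, as required. It remains only to observe that $\Gamma_a$, as an enumeration operator obtained from $\Gamma_e$ by intersecting with a computable set of pairs, comes with an effective index obtained uniformly from an index for $\Gamma_e$, which is what ``transformed into'' should mean.

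The main obstacle, such as it is, is purely bookkeeping: making sure that after the filtering the output is still genuinely a point of the closed subspace of $2^\omega$ corresponding to $\Mod(\tau')$ — i.e.\ that we have not broken any defining closure condition of that subspace — and that no \emph{additional} repair (e.g.\ forcing reflexivity of equality, or totality of the universe) is needed given the paper's convention of using atomic rather than quantifier-free diagrams. Because the only constraints cut out by the homeomorphism in \cref{sec:preliminaries} are the complements of $O_n$ for $n$ coding $x_i = x_j$ ($i\neq j$) and $x_i \neq x_i$, deleting exactly those coordinates from $\Gamma_e(X)$ is precisely the right and sufficient fix, so there is no real difficulty; I would simply spell this out carefully.
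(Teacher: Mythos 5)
Your construction coincides with the paper's in its main step: both obtain $\Gamma_a$ from $\Gamma_e$ by deleting from the axiom set every pair whose output atom codes $x_i\neq x_i$ or $x_i=x_j$ with $i\neq j$, both observe that this keeps $\Gamma_a$ a c.e.\ set of axioms, hence an effective Scott continuous operator with an index obtained uniformly from $e$, and both note that nothing changes on $\K$ because genuine atomic diagrams contain no forbidden atoms. The stagewise definition of $W_{a,s}$ in the paper is just an implementation of your ``intersect with the complement of the computable forbidden set.''

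The one substantive divergence is your explicit claim that no further repair is needed, i.e.\ that after filtering, $\Gamma_a(X)$ is automatically the atomic diagram of a $\tau'$-structure with universe $\omega$. The paper's proof does not take this view: it first adds the axioms $\pair{\code{\emptyset},\code{n=n}}$ for all $n$, precisely ``to ensure that all output sets can be interpreted as $\tau'$-structures with domains $\omega$.'' The point is that in the positive-diagram setting the presence of $n$ in the structure (reflexivity of $=$ at $n$) is witnessed positively by the atom $n=n$; if, say, $\Gamma_e(X)=\emptyset$ for some $X\in\Mod(\tau)\setminus\K$, your filtered output is the empty diagram, which meets the negative constraints of the closed set displayed in \cref{sec:preliminaries} but is not the atomic diagram of any structure with universe $\omega$ in which $=$ denotes equality. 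You are reading that closed-subspace description as an exact characterization of the image of $\Mod(\tau')$, whereas the paper's own proof treats the reflexivity atoms as a requirement and pads them in; the padding is harmless for the conclusion, since for $\A\in\K$ the set $\Gamma_e(\A)$ already contains all atoms $n=n$, so $\Gamma_a(\A)=\Gamma_e(\A)$ still holds. Adding this padding step (uniformly, e.g.\ by throwing $\pair{\code{\emptyset},\code{n=n}}$ into the operator) closes the gap; the rest of your argument matches the paper.
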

\begin{proof}
  Recall that for a set $A$, $\Gamma_e(A) = \{x \mid \pair{v,x} \in W_e\ \&\ D_v \subseteq A\}$.
  We may suppose that for all natural numbers $n$, $\pair{\code{\emptyset},\code{n=n}} \in W_e$.
  This ensures that all output sets can be interpreted as $\tau'$-structures with domains $\omega$.
  Since we can interpret any natural number as the code of a positive atomic sentence in $\tau'$, we only need to filter out from $W_e$ all sentences of the form $n \neq n$ and $n = m$ for different natural numbers $n$ and $m$.

  Let us have the enumeration $\{W_{e,s}\}_{s<\omega}$ of the c.e.\ set $W_e$, where $W_e = \bigcup_s W_{e,s}$, $W_{e,0} = \emptyset$ and $|W_{e,s+1}\setminus W_{e,s}| \leq 1$ for all $s$.
  We define $W_{a,s}$ inductively so that $W_{a,0} = \emptyset$ and $W_{a,s+1} = W_{a,s}$ if $\pair{v,\code{n\neq n}}$ or $\pair{v,\code{n=m}}$ belongs to $W_{e,s+1}\setminus W_{e,s}$,
  and otherwise, set $W_{a,s+1} = W_{a,s} \cup (W_{e,s+1}\setminus W_{e,s})$.
  Then clearly $\Gamma_a:\Mod(\tau) \to \Mod(\tau')$ is an effective Scott continuous mapping and by the construction of $W_a$, $\Gamma_a(\K) = \Gamma_e(\K)$.
  It is clear that we can effectively obtain the index $a$ from the index~$e$.
\end{proof}

\begin{theorem}[Pullback Theorem]\label{thm:pullback}
  Let $\K \subseteq \Mod(\tau)$ and $\K' \subseteq \Mod(\tau')$ be closed under isomorphism.
Let $\Gamma_e : \K \leq_{pc} \K'$, then for any $L^{pc}_{\omega_1\omega}$ $\tau'$-sentence $\varphi'$
we can \emph{effectively} find an $L^{pc}_{\omega_1\omega}$ $\tau$-sentence $\varphi$ such that for all $\A \in \K$,
\[\A \models \varphi \text{ iff } \Gamma_e(\A) \models \varphi'.\]
Moreover, if $\varphi'$ is $\Sigma^{pc}_\alpha$ (or $\Pi^{pc}_\alpha$), then so is $\varphi$.
\end{theorem}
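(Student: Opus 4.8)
The plan is to combine \cref{prop:enumeration-scott} with the effective Lopez-Escobar machinery of \cref{sec:lopez-escobar}, in particular \cref{th:borel-mod-sentence}, applied to the pullback of a Borel set along the Scott-continuous function $\Gamma_a$ associated to $\Gamma_e$. First I would invoke \cref{prop:enumeration-scott} to replace $\Gamma_e$ by an effective Scott continuous map $\Gamma_a \colon \Mod(\tau) \to \Mod(\tau')$ with $\Gamma_a(\K) = \Gamma_e(\K)$ (and agreeing with $\Gamma_e$ on $\K$). Next, given the $\tau'$-sentence $\varphi'$, by the relativized form of \cref{prop:borelandN} and the remarks after \cref{def:compposinf} we may regard $\Mod(\varphi') \subseteq \Mod(\tau')$ as an effectively $\bPi^0_\alpha$ (resp. $\bSigma^0_\alpha$) set, with an index computable from an index for $\varphi'$; here $\alpha$ is the level of $\varphi'$ in normal form.

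The key step is then to pull this set back: set $B = \Gamma_a^{-1}(\Mod(\varphi')) \subseteq \Mod(\tau)$. Since $\Gamma_a$ is an \emph{effective} Scott continuous function, it maps basic open sets to effectively open sets in a way uniform in the enumeration operator's index, and hence taking preimages commutes with the operations $\bigcup_i B_i \setminus B_i'$ defining the effective Borel hierarchy. A routine transfinite induction on $\alpha$ — using the Scott-open base $O_n$ of $2^\omega$ and the fact that $\Gamma_a^{-1}(O_m)$ is uniformly effectively open — shows that $B$ is again effectively $\bPi^0_\alpha$ (resp.\ $\bSigma^0_\alpha$), with an index obtained effectively from the index of $\varphi'$ and $a$ (hence from $e$). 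Moreover, because $\K$ and $\K'$ are closed under isomorphism and $\Gamma_e$ respects isomorphism on $\K$, the set $B \cap \K = \Gamma_e^{-1}(\Mod(\varphi')) \cap \K$ is isomorphism invariant. Now I would apply \cref{th:borel-mod-sentence} with this $\K$ and this index: it yields, effectively, a $\Pi^p_\alpha$ (resp., using the remark after \cref{th:borel-mod-sentence}, $\Sigma^p_\alpha$) $\tau$-sentence $\varphi$ with $\Mod(\varphi) = B$ within $\K$, i.e. for all $\A \in \K$, $\A \models \varphi$ iff $\A \in \Gamma_a^{-1}(\Mod(\varphi'))$ iff $\Gamma_e(\A) \models \varphi'$. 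This is exactly the claim, and the level of $\varphi$ matches that of $\varphi'$.

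The main obstacle I anticipate is the bookkeeping in the inductive preimage argument: one must check that pulling back along the \emph{partial}, monotone object $\Gamma_a$ behaves correctly on the $\bDelta^0$-style differences $B_i \setminus B_i'$ appearing in \cref{def:posinf}'s Borel hierarchy, and that all of this is uniform — i.e.\ that an index for the pulled-back set is produced by an algorithm from $e$ and from an index for $\varphi'$. The monotonicity of enumeration operators and the explicit Scott-open base make each individual step elementary, but phrasing the induction so that it simultaneously delivers membership equivalence on $\K$ and index-uniformity requires care. A secondary point is to make sure the normal-form level $\alpha$ of $\varphi'$ is preserved exactly rather than merely up to a bounded shift; this follows because preimage under a continuous map does not change the quantifier/Boolean structure of the defining $\PropL$ formula, only its atomic $D(\mathbf n)$-subformulas, which get replaced by the effectively open (hence $\Sigma^p_1$, absorbable) preimages $\Gamma_a^{-1}(O_n)$.
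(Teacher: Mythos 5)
Your proposal is correct and takes essentially the same route as the paper: convert $\Gamma_e$ into an effective Scott continuous map via \cref{prop:enumeration-scott}, pull back the $\PropL$ formula defining $\Mod(\varphi')$ by replacing each atomic subformula $D(\mathbf n)$ with the $\pSigma^p_1$ description of the preimage of the basic open $O_n$ (the paper's substitution $\theta_{D(\mathbf n)}=\vvee_{(n,v)\in \Gamma_e}\bigwedge_{m\in D_v} D(\mathbf m)$), observe invariance on $\K$, and finish with \cref{thm:efflopezescobar} and \cref{th:borel-mod-sentence}. The uniformity and level-preservation issues you flag are exactly what the paper settles by its ``easy induction'' on the substituted formula, so no genuinely different argument is involved.
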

\begin{proof}
  % A computable embedding $\Gamma_e:\K\leq_c\K'$ can be transformed into
  % an effective Scott continuous $\Gamma:\Mod(\tau)\to\Mod(\tau')$ with the
  % property that $\Gamma(\K)=\Gamma_e(\K)$ as follows. Let $\Gamma_{e,s}$ be
  % a computable enumeration of $\Gamma_e$ where without loss of generality all $\Gamma_{e,s}$ are finite, and let $(\A_{i}^s)_{i<n}$ be the list of finite
  % $\tau$-structures obtained by taking unions over all subsets of finite diagrams in
  % $\Gamma_{e,s}$. Let $\Theta_{s}\subseteq (\Gamma_{e,s}\setminus\Gamma_{s-1})$ be the
  % maximal set such that $\Gamma_{s-1}\cup \Theta_s$ is consistent, i.e., such
  % that $(\Gamma_{s-1}\cup \Theta_s)(\A_i^s)$ is a partial $\tau'$-structure for all
  % $i$. Let $\Gamma_s=\Gamma_{s-1}\cup\Theta_{s}\cup \{\langle \emptyset,\code{s=s}\rangle\}$ and let
  % $\Gamma=\bigcup_{s\in\omega}\Gamma_s$. Note that $\Gamma(\A)\in \Mod(\tau')$
  % for all $\A\in \tau$ and that if $\A\in \K$, then $\Gamma(\A)=\Gamma_e(\A)$,
  % as $\Gamma_{e,s}$ must be consistent on all $\A\in \K$ because of
  % monotonicity.

  By Proposition~\ref{prop:enumeration-scott}, we can view $\Gamma_e$ as an effective Scott continuous function from $\Mod(\tau)$
  to $\Mod(\tau')$ that is isomorphism invariant on $\K$, i.e., $\A \cong \B$ if and
  only if $\Gamma_e(\A)\cong\Gamma_e(\B)$ for all $\A,\B\in \K$. Thus if $B\subseteq Mod(\tau')$ is
  invariant $\pmb\Pi^0_\alpha$, then $\Gamma_e^{-1}(B)$ is a $\pmb\Pi^0_\alpha$ subset of $Mod(\tau)$ that is invariant on $\K$.
  
  The $\pPi^p_\alpha$ formula $\phi$ describing $\Gamma_e^{-1}(B)$ can be
  effectively obtained from $\phi'$ corresponding to $B$ by
  replacing occurences of atomic formulas in $\phi'$ with the following formulas
  depending on their type:
  \[\theta_\top=\top \qquad \theta_\bot=\bot \qquad \theta_{D(\mathbf n)}=\vvee_{(n,v)\in \Gamma_e} \bigwedge_{m\in D_v}
    D(\mathbf m)\]
   An easy induction
    shows that $\phi\in \Pi^p_\alpha$ if and only if $\phi'\in \Pi^p_\alpha$
    for $\alpha>1$ and that $Mod(\phi)=\Gamma_e^{-1}(\phi')$.
  We obtained $\phi$ effectively from $\phi'$. Thus, if $B=Mod(\phi')$ is $\Pi^0_\alpha$,
  so is $\Gamma_e^{-1}(B)=Mod(\phi)$. A similar argument works for $\Sigma^p_\alpha$ formulas.
To complete the proof of the theorem use \cref{thm:efflopezescobar} to obtain $\phi'$
from $\varphi'$ and then use \cref{th:borel-mod-sentence} to obtain $\varphi$ from $\phi$ such that $Mod(\varphi)$ is equal to $\Gamma_e^{-1}(B)$ within $\K$.  \end{proof}

% As a corollary, we obtain the version of the Pullback Theorem from 
% If we restrict ourselves to \emph{total} structures, then 

% If we consider classes of \emph{total} structures, then $\leq_{tc}$ and $\leq_{c}$ coincide.
% Similarly, $\Sigma^p_\alpha$ and $\Sigma^c_\alpha$ coincide.

\section{An application}\label{sec:application}

% \todo{Stefan: One or two sentences for motivation here.}
In this section, we give a simple application of the Pullback theorem which illuminates the difference between Turing and enumeration operators.

\begin{definition}[\cite{kalimullin2018}]
  For a structure $\A$, let $\tilde{\A}$ be a structure with a new congruence relation $\sim$ such that
  \begin{enumerate}[a)]
  \item
    the $\sim$-class for each element in $\tilde{\A}$ is infinite;
  \item
    $\tilde{\A}/_{\sim} \cong \A$.
  \end{enumerate}
  For a class $\K$, let $\tilde{\K} = \{\tilde{\A} \mid \A \in \K\}$.
\end{definition}

It is not difficult to see that we always have $\tilde{\K} \equiv_{tc} \K$, $\K \leq_c \tilde{\K}$ and $\tilde{\tilde{\K}} \leq_c \tilde{\K}$.
More generally, for any two classes of structures $\K_0$ and $\K_1$, we have the following implications:
\[\K_0 \leq_c \K_1 \implies \tilde{\K}_0 \leq_c \tilde{\K}_1 \implies \K_0 \leq_{tc} \K_1.\]

We will give examples of classes of linear orderings such that the converse of these
implications does not hold. Let $\mathbf n$ denote the finite order type of
size $n$ for $n\in\omega$, and given any preorder $\A$,
let $\A^\star$ be the structure obtained by reversing the order, i.e., if
$x\leq^{\A} y$, then $y\leq^{\A^\star}x$. To see that the second implication is
strict consider the classes $\K_0 = \{{\bf 1}, {\bf 2}\}$ and
$\K_1 = \{\omega,\omega^\star\}$. It is easy to see that $\K_0 \leq_{tc} \K_1$,
but the monotonicity of enumeration operators gives us that  $\tilde\K_0 \not\leq_{c} \tilde\K_1$
because $\tilde{\bf 1}$ is a substructure of $\tilde{\bf 2}$, but
$\tilde\omega$ is \emph{not} a substructure of $\tilde\omega^\star$.

% It is easy to see $\{\tilde{{\bf 1}}, \tilde{{\bf
% 2}}\}\not\leq_{c} \{\omega,\omega^\star\}$, but $\{ {\bf 1}, {\bf 2} \}
% \leq_{tc} \{\omega,\omega^\star\}$.%, we do not have the converse of the second implication.
Kalimullin \cite{kalimullin2018} built a class $\K$ of finite
undirected graphs such that $\tilde{\K} \not\leq_c \K$.
This shows that the first implication is strict as we always have $\tilde{\tilde{\K}} \leq_c \tilde\K$.
Since the example in the class of graphs is somewhat artificial, Kalimullin asked the following question.
\begin{question}[Kalimullin]
  Are there natural classes of structures $\K$ such that
  \[\tilde{\K} \not\leq_c \K?\]
\end{question}
% We will show that $\{\tilde\omega,\tilde\omega^\star\}\not\leq_c
% \{\omega,\omega^\star\}$.
% %The class $\{\omega,\omega^\star\}$ is a good candidate for an initial investigation.
% Since $\{\omega,\omega^\star\} \equiv_{tc} \{\tilde{\omega},\tilde{\omega}^\star\}$, 
% this shows that computable embeddings can behave differently from Turing computable embeddings even in this simple setting.

\begin{theorem}[Ganchev-Kalimullin-Vatev \cite{kalimullin2018}]
  Let $\K = \{\omega_S,\omega^\star_S\}$ be the class of strict linear orderings of types $\omega$ and $\omega^\star$,
  equipped with the successor relation. Then
  $\tilde\K \not\leq_{c} \K$.
\end{theorem}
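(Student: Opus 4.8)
This is a theorem of Ganchev, Kalimullin, and Vatev \cite{kalimullin2018}, and it is worth noting at the outset that the pullback route used elsewhere here does not suffice on its own: although $\omega_S$ and $\omega^\star_S$ are separated by a $\Sigma^p_2$ sentence --- ``there is an element with no $S$-predecessor'' holds in $\omega_S$ but not in $\omega^\star_S$ --- so are $\tilde{\omega_S}$ and $\tilde{\omega^\star_S}$, since the bottom $\sim$-class of $\tilde{\omega_S}$ consists of elements with no $S$-predecessor whereas every element of $\tilde{\omega^\star_S}$ is an $S$-successor. Hence \cref{thm:pullback} does not by itself yield a contradiction, and the plan is instead to exploit the monotonicity of computable embeddings together with the combinatorial rigidity of the two targets.

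First I would put the hypothetical embedding in normal form. Suppose $\Gamma$ were a computable embedding of $\tilde{\K}$ into $\K$. As $\tilde{\omega_S}\not\cong\tilde{\omega^\star_S}$, the images $\Gamma(\tilde{\omega_S})$ and $\Gamma(\tilde{\omega^\star_S})$ are non-isomorphic members of $\K$, hence exhaust $\{\omega_S,\omega^\star_S\}$ up to isomorphism; since the order reversal $\mathcal M\mapsto\mathcal M^\star$ (with $S^{\mathcal M^\star}(a,b)$ iff $S^{\mathcal M}(b,a)$) is an enumeration operator and an isomorphism-invariant involution of $\K$ swapping $\omega_S$ and $\omega^\star_S$, I may assume after composing with it that $\Gamma(\tilde{\omega_S})\cong\omega_S$ and $\Gamma(\tilde{\omega^\star_S})\cong\omega^\star_S$. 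The crucial point about the inputs is that they share all finite information: a finite fragment of the atomic diagram of a presentation of $\tilde{\omega_S}$ mentions only finitely many $\sim$-classes, carrying a finite linear (pre)order with some designated successor pairs, and any such finite configuration is also an initial fragment of a presentation of $\tilde{\omega^\star_S}$, because $\omega^\star$ contains arbitrarily long finite chains and the order and successors of the not-yet-constrained classes can be chosen freely; symmetrically with the roles exchanged. Since $\Gamma$ is given by a c.e.\ set of axioms and is therefore monotone, an atom forced into the output by such a finite fragment is forced no matter which of the two input structures the fragment is completed to.

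The contradiction then rests on the successor asymmetry. Building a presentation of $\tilde{\omega_S}$ in stages and watching $\Gamma$, one extracts a finite fragment $\tau$ on which $\Gamma$ has already ``committed'' the bottom of the $\omega_S$-copy it is producing, in the sense that no atom $S(z,m)$ will ever appear for the designated least element $m$. Completing $\tau$ to a presentation of $\tilde{\omega^\star_S}$ and continuing, $\Gamma$ must eventually enumerate some $S(z,m)$ because every element of $\omega^\star_S$ has an $S$-predecessor; but the finite fragment triggering this enumeration is again completable to a presentation of $\tilde{\omega_S}$, violating the commitment. The step I expect to be the main obstacle is making this precise: $\Gamma$ need not designate any particular element as least until the limit, and it may delay its output arbitrarily, so the ``stages'' must really be organized as a finite-injury construction diagonalizing simultaneously against every candidate least element and every finite fragment on which $\Gamma$ could try to build the bottom of an $\omega_S$-copy --- the bookkeeping carried out in \cite{kalimullin2018}.
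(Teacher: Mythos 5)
This theorem is not proved in the paper at all: it is quoted from Ganchev--Kalimullin--Vatev \cite{kalimullin2018} as background, so there is no in-paper argument to compare yours against, and your proposal has to stand on its own. Its framing is sound as far as it goes: you are right that the paper's pullback method (\cref{thm:pullback} together with the analogue of \cref{lem:sigma-2-equal}) cannot work here, since with the successor relation $\tilde{\omega}_S$ and $\tilde{\omega}^\star_S$ are themselves separated by a $\Sigma^p_2$ sentence (this matches the paper's remark after \cref{prop:extenstion-tilde-pair} that the key proposition fails for successor-type vocabularies); the normalization via the order-reversal operator is legitimate; and monotonicity plus the mutual extendability of finite positive-diagram fragments of $\tilde{\omega}_S$ and $\tilde{\omega}^\star_S$ are indeed the right raw materials.

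The gap is in the ``commitment'' step, and it is not mere bookkeeping. A commitment of the form ``no atom $S(z,m)$ will ever appear'' cannot be attached to any finite fragment $\tau$: by your own extendability observation, every finite $\tau$ extends to a presentation of $\tilde{\omega}^\star_S$, on which $\Gamma$ must eventually enumerate some $S(z,m)$, so the only true commitment is relative to the one completed presentation $D$ of $\tilde{\omega}_S$ you are building, not to a finite condition. Consequently the final move yields no contradiction: when you complete the fragment that triggered $S(z,m)$ back to a presentation $D'$ of $\tilde{\omega}_S$, the output $\Gamma(D')$ is simply a different copy of $\omega_S$ in which $m$ is not the least element --- nothing forces $\Gamma$ to build ``the same'' copy on $D$ and $D'$. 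Nor can you escape by switching sides infinitely often, since the limit of infinitely many alternations is a presentation of a structure whose quotient has type $\omega^\star+\omega$, $\zeta$, or similar, hence lies outside $\tilde\K$, where $\Gamma$ promises nothing. So the diagonalization you defer to \cite{kalimullin2018} is precisely the mathematical content of the theorem, and as written your sketch does not close; it should be presented as a citation of their result (as the paper does), not as a proof.
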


Interestingly, it is not known whether $\{\tilde{\omega},\tilde{\omega}^\star\} \not\leq_{c} \{\omega,\omega^\star\}$,
where $\{\omega, \omega^\star\}$ is the class of \emph{strict} linear orderings of types $\omega$ and $\omega^\star$.
Nevertheless, if we let $\{\omega,\omega^\star\}$ be the class of linear orderings of order type $\omega$ and $\omega^\star$
in the vocabulary $\leq, =, \neq$, then we can apply our new version of the Pullback Theorem and show that
$\{\tilde{\omega},\tilde{\omega}^\star\} \not\leq_{pc} \{\omega,\omega^\star\}$.
For $\A$ a linear ordering, the congruence relation $x \sim y$ in $\tilde \A$ is definable by the $\Sigma^p_0$ formula $x \leq y \land y \leq x$, and, thus, we may omit $\sim$ from the vocabulary.

For a class of formulas $\C$ (e.g., $\Sigma^p_0$, $\Pi^p_1$, etc.), a structure $\A$ and a tuple $\bar{a}$ in $\A$, define
$\C\text{-}Th_\A(\bar{a}) = \{\phi(\bar{x}) \in \C \mid \A \models \phi(\bar{a})\}$. If the tuple $\bar{a}$ is empty, we will write $\C\text{-}Th_\A$.

% We begin by stating an obvious fact.
% \begin{proposition}
%   \label{prop:sigma-1-theory}
%   $\Sigma^p_1\text{-}Th_{\omega} = \Sigma^p_1\text{-}Th_{\omega^\star}$
%   and
%   $\Sigma^p_1\text{-}Th_{\tilde{\omega}} = \Sigma^p_1\text{-}Th_{\tilde{\omega}^\star}$.
% \end{proposition}

The next proposition isolates the key property that the class $\{\tilde{\omega}, \tilde{\omega}^\star\}$ possess which
will allow us to show that $\{\tilde{\omega},\tilde{\omega}^\star\} \not\leq_{pc} \{\omega,\omega^\star\}$.

% \todo{Stefan: It is important to stress here that the language is equipped with $\leq$ instead of $<$.}
\begin{proposition}
  \label{prop:extenstion-tilde-pair}
  Let $\A$ and $\B$ be linear orderings in the vocabulary $\leq, =, \neq$.
  For any tuple $\bar{a}$ in $\tilde{\A}$ and any tuple $\bar{b}$ in $\tilde{\B}$,
  if $\Sigma^{pc}_0\text{-}Th_{\tilde{\A}}(\bar{a}) = \Sigma^{pc}_0\text{-}Th_{\tilde{\B}}(\bar{b})$,
  then $\Sigma^{pc}_1\text{-}Th_{\tilde{\A}}(\bar{a}) = \Sigma^{pc}_1\text{-}Th_{\tilde{\B}}(\bar{b})$.
  % For any copy $\tilde{\A}$ of $\tilde{\omega}$ and a tuple $\bar{a}$ in $\tilde{\A}$ and any copy $\tilde{\A}^\star$ of $\tilde{\omega}^\star$ and tuple $\bar{a}'$ in $\tilde{\A}^\star$,
  % if $\Sigma^p_0\text{-}Th_{\tilde{\A}}(\bar{a}) = \Sigma^p_0\text{-}Th_{\tilde{\A}^\star}(\bar{a}')$,
  % then $\Sigma^p_1\text{-}Th_{\tilde{\A}}(\bar{a}) = \Sigma^p_1\text{-}Th_{\tilde{\A}^\star}(\bar{a}')$.
\end{proposition}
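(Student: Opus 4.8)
The plan is to prove the inclusion $\Sigma^p_1\text{-}Th_{\tilde{\A}}(\bar a)\subseteq \Sigma^p_1\text{-}Th_{\tilde{\B}}(\bar b)$; the reverse inclusion then follows by symmetry. So fix a $\Sigma^p_1$ formula $\varphi(\bar u)=\vvee_{i\in I}\exists\bar x_i\,\psi_i(\bar u,\bar x_i)$ with $\tilde{\A}\models\varphi(\bar a)$, pick a disjunct $\exists\bar x\,\psi(\bar u,\bar x)$ that holds of $\bar a$, and fix a witnessing tuple $\bar c$ in $\tilde{\A}$, so $\tilde{\A}\models\psi(\bar a,\bar c)$ with $\psi$ a finite conjunction of atomic $\{\le,=,\neq\}$-formulas. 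It suffices to produce a tuple $\bar d$ in $\tilde{\B}$ with $\tilde{\B}\models\psi(\bar b,\bar d)$. First I would preprocess $\psi$: whenever it contains a conjunct $x_j=u_i$ or $x_j=x_{j'}$, that identity holds between the corresponding entries of $(\bar a,\bar c)$, so we may substitute the variables and reduce the number of existential variables; after finitely many such steps $\psi$ has no $=$-conjunct involving an $x$-variable. The remaining $=$-conjuncts involve only $\bar u$, hence hold of $\bar b$ because $\bar a$ and $\bar b$ have the same $\Sigma^p_0$-type, and the $\neq$-conjuncts will be taken care of at the end simply by choosing the entries of $\bar d$ pairwise distinct and distinct from the $b_i$'s (possible since every $\sim$-class of $\tilde{\B}$ is infinite). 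So the real content is to satisfy the $\le$-conjuncts.

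Let $q\colon\tilde{\A}\to\A$ and $q'\colon\tilde{\B}\to\B$ be the quotient maps and write $\alpha_i=q(a_i)$, $\gamma_j=q(c_j)$. The key observation is that a positive atomic formula can never force two entries into distinct $\sim$-classes: the conjunct $v\le w$ holds of the corresponding entries of $(\bar a,\bar c)$ in $\tilde{\A}$ iff the $\A$-images satisfy $\le_\A$ in that order (the same-class case included). I would use this to collapse the "new" classes onto the old ones: let $p_1<_\A\dots<_\A p_k$ enumerate the distinct values among $\alpha_1,\dots,\alpha_n$, and replace each $\gamma_j$ by $\gamma_j'$, defined as $p_1$ if $\gamma_j\le_\A p_1$ and otherwise as the largest $p_\ell$ with $p_\ell\le_\A\gamma_j$. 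A short monotonicity check, using that $\le_\A$ is linear, shows that with the $\alpha_i$ unchanged, every $\le$-conjunct of $\psi$ is still satisfied by the values $u_i\mapsto\alpha_i$, $x_j\mapsto\gamma_j'$; if $k=0$ there are no $\bar u$'s and it is enough to send every $x_j$ to a single fixed $\sim$-class of $\tilde{\B}$. Since $\bar a$ and $\bar b$ have the same $\Sigma^p_0$-type, the assignment $\alpha_i\mapsto q'(b_i)$ is a well-defined order isomorphism from $\{\alpha_1,\dots,\alpha_n\}\subseteq\A$ onto $\{q'(b_1),\dots,q'(b_n)\}\subseteq\B$; let $p_\ell'\in\B$ be the image of $p_\ell$. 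Now pick $d_j\in\tilde{\B}$ with $q'(d_j)=p_{\ell}'$ whenever $\gamma_j'=p_\ell$, choosing these finitely many elements pairwise distinct and distinct from all $b_i$. Then every $\le$-conjunct holds in $\tilde{\B}$ (the relevant $\B$-values are ordered exactly as the corresponding $\A$-values were), every $=$-conjunct holds of $\bar b$, and every $\neq$-conjunct holds by distinctness; hence $\tilde{\B}\models\psi(\bar b,\bar d)$, so $\tilde{\B}\models\varphi(\bar b)$.

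The main obstacle is conceptual rather than computational: the naive strategy of choosing $\bar d$ so that $(\bar b,\bar d)$ has the same atomic ($\Sigma^p_0$) type as $(\bar a,\bar c)$ fails outright — e.g. with $\A=\mathbf 3$ and $\B=\mathbf 2$ one cannot reproduce "strictly between two $\sim$-classes" inside $\tilde{\B}$ — and one must exploit that $\Sigma^p_1$ formulas are built from atoms only (no negations), so no conjunct can demand that a witness lie strictly between two named classes; this is precisely what licenses the collapse $\gamma_j\mapsto\gamma_j'$. The other points requiring a little care are the bookkeeping in the preprocessing step and the verification that the collapsed assignment still respects every $\le$-conjunct, but once the collapse is set up correctly this is a routine finite case analysis on linear orders.
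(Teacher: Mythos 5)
Your proof is correct and follows essentially the same strategy as the paper's: place each witness according to its position relative to the parameter tuple and, exploiting that positive formulas cannot force a witness outside a named $\sim$-class, collapse it onto a parameter class, using the infinitude of the classes to meet the $\neq$-constraints. Your uniform ``collapse down to the nearest named class'' recipe is just a cleaner, more detailed write-up of the paper's case analysis, so there is nothing substantively different to flag.
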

\begin{proof}
  It is enough to show that $\Sigma^{pc}_1\text{-}Th_{\tilde{\A}}(\bar{a}) \subseteq \Sigma^{pc}_1\text{-}Th_{\tilde{\B}}(\bar{b})$.
  Consider a formula $\varphi(\bar{x}) = \vvee_{i\in W_e}\exists\bar{x}_i \phi_i(\bar{x},\bar{x}_i)$, where $\phi_i(\bar{x},\bar{x}_i)$ are $\Sigma^{pc}_0$.
  Suppose $\tilde{\A} \models \varphi(\bar{a})$.
  % Consider $\tilde{\A} \models \vvee_{i\in W_e}\exists\bar{x}_i \phi_i(\bar{a},\bar{x}_i)$, where $\phi_i$ is $\Sigma^p_0$.
  Fix $i \in W_e$ and a tuple $\bar{a}'$ in $\tilde{A}$ such that $\tilde{\A} \models \phi_i(\bar{a},\bar{a}')$.

  Since $\Sigma^{pc}_0\text{-}Th_{\tilde{\A}}(\bar{a}) = \Sigma^{pc}_0\text{-}Th_{\tilde{\B}}(\bar{b})$,
  we know that $\bar{a}$ and $\bar{b}$ are ordered in the same way.
  Without loss of generality, suppose $\bar{a}$ and $\bar{b}$ have length $n$ and $\tilde{\A} \models a_0 \leq a_1 \leq \cdots \leq a_{n-1}$ and similarly,
  $\tilde{\B} \models b_0 \leq b_1 \leq \cdots \leq b_{n-1}$. We consider two cases which show how we find the tuple $\bar{b}'$   which makes $\phi_i(\bar{b},\bar{b}')$ true in $\tilde{\B}$. In both cases we use the fact that  each $\sim$-class is infinite.
  
  Suppose that for some $a'_j$ from the tuple $\bar{a}'$, $\tilde{\A} \models a_i \leq a'_j \leq a_{i+1}$.
  The interesting case is when $a'_j$ is in a separate $\sim$-class from $a_i$ and $a_{i+1}$, but $b_{i+1}$ belongs to the successor $\sim$-class of $b_{i}$ in $\tilde{\B}$.
  Then we choose the corresponding element $b'_j$ as an element $\sim$-equivalent to $b_i$ or $b_{i+1}$.
  In this way, $\tilde{\B} \models b_{i} \leq b'_j \leq b_{i+1}$.

  Similary, suppose that for some $a'_j$ from the tuple $\bar{a}'$, $\tilde{\A} \models a_{n-1} \leq a'_j$,
  but $b_{n-1}$ belongs to the $\leq$-greatest $\sim$-class in $\tilde{\B}$.
  Then, again, we choose the corresponding $b'_j$ as an element $\sim$-equivalent to $b_{n-1}$.
\end{proof}

Notice that Proposition~\ref{prop:extenstion-tilde-pair} does not hold if we consider \emph{strict} linear oderings $\A$ and $\B$
and it does not hold for the class $\{\tilde{\omega}, \tilde{\omega}^\star\}$ if the vocabulary under consideration is $<,=,\neq$.

\begin{lemma}
  \label{lem:sigma-2-equal}
  Let $\A$ and $\B$ be linear orderings in the vocabulary $\leq, =, \neq$. Then
  % For any copy $\tilde{\A}$ of $\tilde{\omega}$ and any copy $\tilde{\A}^\star$ of $\tilde{\omega}^\star$,
  % \[\Sigma^p_2\text{-}Th_{\tilde{\A}} = \Sigma^p_2\text{-}Th_{\tilde{\A}^\star}.\]
  \[\Sigma^{pc}_2\text{-}Th_{\tilde{\A}} = \Sigma^{pc}_2\text{-}Th_{\tilde{\B}}.\]
\end{lemma}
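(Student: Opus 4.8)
The plan is to reduce the statement about $\Sigma^p_2$-theories to the statement about $\Sigma^p_1$-theories already established in Proposition \ref{prop:extenstion-tilde-pair}, exploiting the fact that any two orderings of the form $\tilde{\A}$ and $\tilde{\B}$ "look the same" from the point of view of $\Sigma^p_0$-types of tuples, because every $\sim$-class is infinite. First I would recall the shape of a $\Sigma^p_2$ sentence: $\varphi = \vvee_{i\in I}\exists\bar{x}_i(\phi_i(\bar{x}_i)\land\psi_i(\bar{x}_i))$ with $\phi_i\in\Sigma^p_1$ and $\psi_i\in\Pi^p_1$. By symmetry it suffices to show $\Sigma^p_2\text{-}Th_{\tilde{\A}}\subseteq\Sigma^p_2\text{-}Th_{\tilde{\B}}$. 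So suppose $\tilde{\A}\models\varphi$; fix $i\in I$ and a witnessing tuple $\bar{a}$ in $\tilde{\A}$ with $\tilde{\A}\models\phi_i(\bar{a})\land\psi_i(\bar{a})$.

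The key step is to produce a tuple $\bar{b}$ in $\tilde{\B}$ with the same $\Sigma^p_0$-type, i.e., ordered the same way and with the same pattern of $\sim$-equivalences among its entries, AND such that the "environment" of $\bar{b}$ in $\tilde{\B}$ is rich enough to verify the $\Pi^p_1$ conjunct $\psi_i$. Here I would use the structure of $\tilde{\omega}$ and $\tilde{\omega}^\star$ explicitly (or argue more generally): since both $\tilde{\A},\tilde{\B}$ have all $\sim$-classes infinite, given any finite $\Sigma^p_0$-type realized by $\bar{a}$ in $\tilde{\A}$, we can realize the same type by some $\bar{b}$ in $\tilde{\B}$ — choose representatives class-by-class, using infinitude of each class to place as many entries as needed inside a single $\sim$-class. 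Then $\Sigma^p_0\text{-}Th_{\tilde{\A}}(\bar{a}) = \Sigma^p_0\text{-}Th_{\tilde{\B}}(\bar{b})$, so by Proposition \ref{prop:extenstion-tilde-pair} also $\Sigma^p_1\text{-}Th_{\tilde{\A}}(\bar{a}) = \Sigma^p_1\text{-}Th_{\tilde{\B}}(\bar{b})$; in particular $\tilde{\B}\models\phi_i(\bar{b})$ since $\phi_i\in\Sigma^p_1$. It remains to get $\tilde{\B}\models\psi_i(\bar{b})$.

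For the $\Pi^p_1$ conjunct, write $\psi_i(\bar{x}) = \wwedge_{j}\forall\bar{y}_j\,\chi_j(\bar{x},\bar{y}_j)$ with $\chi_j$ a $\Pi^p_0$ formula, i.e., a finite disjunction of negated atomic formulas. Equivalently, $neg(\psi_i)$ is the $\Sigma^p_1$ formula $\vvee_j\exists\bar{y}_j\,neg(\chi_j)$. Now $\tilde{\A}\models\psi_i(\bar{a})$ means $\tilde{\A}\not\models neg(\psi_i)(\bar{a})$, i.e., $neg(\psi_i)(\bar{x})\notin\Sigma^p_1\text{-}Th_{\tilde{\A}}(\bar{a})$; since the $\Sigma^p_1$-types of $\bar{a}$ and $\bar{b}$ coincide, $neg(\psi_i)(\bar{x})\notin\Sigma^p_1\text{-}Th_{\tilde{\B}}(\bar{b})$ either, so $\tilde{\B}\not\models neg(\psi_i)(\bar{b})$, which is exactly $\tilde{\B}\models\psi_i(\bar{b})$. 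Hence $\tilde{\B}\models\exists\bar{x}_i(\phi_i(\bar{x}_i)\land\psi_i(\bar{x}_i))$ and therefore $\tilde{\B}\models\varphi$.

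The main obstacle I anticipate is the very first half of the key step: verifying that \emph{every} finite $\Sigma^p_0$-type realized in $\tilde{\A}$ is also realized in $\tilde{\B}$. This is where the hypothesis really bites and where one must be careful about what $\Sigma^p_0$-types can look like for linear orderings in the vocabulary $\leq,=,\neq$ — a $\Sigma^p_0$-type of a tuple records only the weak order $\leq$ among entries (equality being definable as $x\leq y\land y\leq x$), so two entries are "the same $\leq,=,\neq$-type-class" exactly when they are $\sim$-equivalent; infinitude of the $\sim$-classes of $\tilde{\B}$ then lets us match any such configuration. One should also double-check the degenerate cases (empty tuple, or tuples that collapse to a single $\sim$-class), but these are routine. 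Note this argument does not use any special feature of $\omega$ versus $\omega^\star$ beyond the two structures being of the form $\tilde{\A}$ and $\tilde{\B}$, which matches the stated generality of the lemma; and, as remarked after Proposition \ref{prop:extenstion-tilde-pair}, it genuinely fails for the strict vocabulary $<,=,\neq$, since there the $\Sigma^p_0$-type would also see when two entries lie in adjacent (successor) $\sim$-classes.
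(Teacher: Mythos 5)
Your proof is correct and takes essentially the same route as the paper: by symmetry fix a disjunct and a witnessing tuple $\bar{a}$, use the infinitude of the $\sim$-classes to realize the same $\Sigma^p_0$-type by a tuple $\bar{b}$ in $\tilde{\B}$, and then invoke \cref{prop:extenstion-tilde-pair}; your explicit treatment of the $\Pi^p_1$ conjunct through its $\Sigma^p_1$ negation is exactly what the paper does implicitly by writing the matrix as $\phi_i(\bar{x}_i)\land neg(\psi_i(\bar{x}_i))$ with both $\phi_i,\psi_i$ in $\Sigma^p_1$. The only quibble, which does not affect the proof, is your closing aside: for the strict vocabulary the obstruction is not that the $\Sigma^p_0$-type of a tuple ``sees'' adjacency of $\sim$-classes (a quantifier-free positive type cannot), but that the witness-placement argument in the proof of \cref{prop:extenstion-tilde-pair} breaks down, since an element $\sim$-equivalent to $b_i$ no longer satisfies the required strict inequalities.
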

\begin{proof}
  It is enough to show that $\Sigma^{pc}_2\text{-}Th_{\tilde{\A}} \subseteq \Sigma^{pc}_2\text{-}Th_{\tilde{\B}}$.
  Consider a $\Sigma^{pc}_2$ sentence $\varphi$ such that 
  \[\varphi = \vvee_{i\in W_e} \exists \bar{x}_i (\phi_i(\bar{x}_i) \land neg(\psi_i(\bar{x}_i)),\]
  where $\phi_i(\bar{x}_i)$ and $\psi_i(\bar{x}_i)$ are $\Sigma^p_1$ formulas.
  Fix $i \in W_a$ and a tuple $\bar{a}$ in $\tilde{\A}$ such that
  $\tilde{\A} \models \phi_i(\bar{a}) \land neg(\psi_i(\bar{a}))$.

  Since every $\sim$-class contains infinitely many elements, it is easy to see that we can choose a tuple $\bar{b}$ in $\tilde{\B}$ such that
  $\Sigma^{pc}_0\text{-}Th_{\tilde{\A}}(\bar{a}) = \Sigma^{pc}_0\text{-}Th_{\tilde{\B}}(\bar{b})$.
  But then by Proposition~\ref{prop:extenstion-tilde-pair},
  $\Sigma^{pc}_1\text{-}Th_{\tilde{\A}}(\bar{a}) = \Sigma^{pc}_1\text{-}Th_{\tilde{\B}}(\bar{b})$.
  It follows that $\tilde{\A} \models \phi_i(\bar{b}) \land neg(\psi_i(\bar{b}))$
  and hence $\tilde{\B} \models \varphi$. 
\end{proof}

\begin{theorem}\label{thm:omegaomegastar}
  $\{\tilde{\omega},\tilde{\omega}^\star\} \not\leq_{pc} \{\omega,\omega^\star\}$.
\end{theorem}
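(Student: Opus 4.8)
The plan is to derive a contradiction from the assumption that there is a computable embedding $\Gamma\colon \{\tilde\omega,\tilde\omega^\star\} \leq_c \{\omega,\omega^\star\}$. The key point is that the two structures $\tilde\omega$ and $\tilde\omega^\star$ must be sent to \emph{non-isomorphic} structures in $\{\omega,\omega^\star\}$, since $\tilde\omega \not\cong \tilde\omega^\star$ and $\Gamma$ preserves and reflects isomorphism. So without loss of generality $\Gamma(\tilde\omega) \cong \omega$ and $\Gamma(\tilde\omega^\star) \cong \omega^\star$. Now $\omega$ and $\omega^\star$ are distinguished by a $\Sigma^p_2$ sentence: for instance, $\omega^\star$ has a greatest element, which is expressible by a $\Sigma^p_1$ sentence $\exists x\, \forall y\, (y \leq x)$ — wait, that has a $\forall$, so it is $\Sigma^p_2$ after rewriting the inner $\forall y (y\le x)$ as $\Pi^p_1$; in any case "having a greatest element" is a $\Sigma^p_2$ sentence $\psi$ with $\omega^\star \models \psi$ and $\omega \not\models \psi$. (Symmetrically "having a least element" works, and either choice is fine.)

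The next step is to apply the Pullback Theorem (\cref{thm:pullback}) to $\Gamma$ and the $\Sigma^p_2$ $\tau'$-sentence $\psi$, obtaining a $\Sigma^p_2$ $\tau$-sentence $\psi^\star$ in the vocabulary $\leq,=,\neq$ such that for all $\A \in \{\tilde\omega,\tilde\omega^\star\}$, $\A \models \psi^\star$ if and only if $\Gamma(\A) \models \psi$. Then $\tilde\omega^\star \models \psi^\star$ (because $\Gamma(\tilde\omega^\star)\cong\omega^\star \models \psi$) while $\tilde\omega \not\models \psi^\star$ (because $\Gamma(\tilde\omega)\cong\omega\not\models\psi$). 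Thus $\psi^\star$ is a $\Sigma^p_2$ sentence separating $\tilde\omega$ from $\tilde\omega^\star$.

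But this directly contradicts \cref{lem:sigma-2-equal}, which asserts that $\Sigma^p_2\text{-}Th_{\tilde\A} = \Sigma^p_2\text{-}Th_{\tilde\B}$ for \emph{all} linear orderings $\A,\B$ in the vocabulary $\leq,=,\neq$; in particular $\Sigma^p_2\text{-}Th_{\tilde\omega} = \Sigma^p_2\text{-}Th_{\tilde\omega^\star}$, so no such $\psi^\star$ can exist. This contradiction completes the proof. I do not expect any serious obstacle here: the only points requiring a little care are (i) verifying that $\Gamma$ cannot collapse $\tilde\omega$ and $\tilde\omega^\star$ to isomorphic images — which is immediate from clause (2) of the definition of computable embedding together with $\tilde\omega\not\cong\tilde\omega^\star$ — and (ii) choosing the separating sentence $\psi$ at the right level of the hierarchy, for which "has a greatest element" (equivalently for the other assignment, "has a least element") is a clean $\Sigma^p_2$ example distinguishing $\omega$ from $\omega^\star$.
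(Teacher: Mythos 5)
Your overall strategy is exactly the paper's: assume a computable embedding, note it must separate $\tilde\omega$ from $\tilde\omega^\star$, pull back a $\Sigma^p_2$ sentence distinguishing $\omega$ from $\omega^\star$ via \cref{thm:pullback}, and contradict \cref{lem:sigma-2-equal}. However, there is a genuine flaw at the one delicate point of this argument, namely your choice and justification of the separating sentence $\psi$. You propose $\exists x\,\forall y\,(y\leq x)$ and claim it becomes $\Sigma^p_2$ "after rewriting the inner $\forall y\,(y\leq x)$ as $\Pi^p_1$". That rewriting is not available: by \cref{def:posinf}, a $\Pi^p_1$ formula is a conjunction of universally quantified $\Pi^p_0$ formulas, and $\Pi^p_0$ formulas are finite disjunctions of \emph{negations} of atomic formulas. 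The matrix $y\leq x$ is a positive atomic formula, so $\forall y\,(y\leq x)$ is not $\Pi^p_1$; the cheapest way to fit it into the positive hierarchy is as a $\Pi^p_2$ formula (e.g.\ $\forall y\,(y\leq x \lor \bot)$ with the $\Sigma^p_1$ disjunct $y\le x$), which makes your $\psi$ only $\Sigma^p_3$. Since \cref{lem:sigma-2-equal} is specifically a $\Sigma^p_2$ statement (and indeed $\tilde\omega$ and $\tilde\omega^\star$ are \emph{not} $\Sigma^p_3$-equivalent), the contradiction evaporates if $\psi$ is only $\Sigma^p_3$. This is precisely the subtlety the paper flags: it explicitly avoids $\exists x\,\forall y\,(y\leq x)$ and $\exists x\,\forall y\,(x\leq y)$ because they are $\Sigma^p_3$, not $\Sigma^p_2$.

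The gap is repairable, and the repair is the actual content of the paper's choice of sentences: on total orders, "$x$ is greatest" is equivalent to $\forall y\,(\neg\, x\neq y \lor \neg\, x\leq y)$, whose matrix is a disjunction of negated atomics and hence $\Pi^p_0$; the resulting sentence $\exists x\,\forall y\,(\neg\, x\neq y \lor \neg\, x\leq y)$ (and its mirror image for "least") is genuinely $\Sigma^p_2$ and separates $\omega$ from $\omega^\star$. With that substitution your argument goes through and coincides with the paper's proof (the paper keeps both sentences $\varphi_0,\varphi_1$ rather than invoking your WLOG, but that difference is cosmetic). So: right skeleton, but the step you waved through is the one place where the positive hierarchy actually bites, and as written it fails.
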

\begin{proof}
  Consider the $\Sigma^{pc}_2$ sentences
  \[\varphi_0 = \exists x \forall y(\neg x \neq y \lor \neg y \leq x)\]
  and
  \[\varphi_1 = \exists x \forall y(\neg x \neq y \lor \neg x \leq y).\]
  It is clear that for any two copies $\A$ and $\A^\star$ of $\omega$ and $\omega^\star$, respectively,
  $\A \models \varphi_0 \land \neg \varphi_1$ and $\A^\star \models \neg \varphi_0 \land \varphi_1$.
  Notice that we don't use the formulas 
  $\exists x \forall y(x\leq y)$ and $\exists x \forall y(y \leq x)$ since they are $\Sigma^{pc}_3$ and not $\Sigma^{pc}_2$.

  If we assume that $\{\tilde{\omega},\tilde{\omega}^\star\} \leq_{pc} \{\omega,\omega^\star\}$,
  by the Pullback Theorem, there must be $\Sigma^{pc}_2$ sentences $\tilde{\varphi}_0$ and $\tilde{\varphi}_1$ such that
  for any two copies $\tilde{\A}$ and $\tilde{\A}^\star$ of $\tilde{\omega}$ and $\tilde{\omega}^\star$, respectively, 
  $\tilde{\A} \models \tilde{\varphi}_0 \land \neg \tilde{\varphi}_1$ and $\tilde{\A}^\star \models \neg \tilde{\varphi}_0 \land \tilde{\varphi}_1$.
  But this contradicts Lemma~\ref{lem:sigma-2-equal}.
\end{proof}

% \begin{proof}
%   Since $x \sim y \iff x \leq y \land y \leq x$, we may assume that we consider copies of $\tilde{\omega}$ and $\tilde{\omega}^\star$ in the language $\leq, =, \neq$.
%   We will show that the sets of $\Sigma^p_2$ sentences true in $\tilde{\omega}$ and the set of $\Sigma^p_2$ sentences true in $\tilde{\omega}^\star$ coincide
%   and thus we reach a contradiction with the Pullback Theorem.

% \end{proof}

\printbibliography

\end{document}